\documentclass[11pt]{article}
\usepackage{amsthm, amsmath, amssymb, amsfonts, url, booktabs, tikz, setspace, fancyhdr, enumerate}
\usepackage{comment}
\usepackage[margin = 1in]{geometry}

\usepackage[czech,english]{babel} 

\usepackage{hyperref}
\usepackage{cleveref}

\usepackage{pst-node}
\usepackage{auto-pst-pdf}
\usepackage{tikz-cd}

\definecolor{refkey}{gray}{.75}
\definecolor{labelkey}{gray}{.2}

\usepackage[textsize=scriptsize,colorinlistoftodos]{todonotes}

\newtheorem{theorem}{Theorem}[section]
\newtheorem{proposition}[theorem]{Proposition}
\newtheorem{lemma}[theorem]{Lemma}
\newtheorem{corollary}[theorem]{Corollary}
\newtheorem{conjecture}[theorem]{Conjecture}

\theoremstyle{definition}

\newtheorem{example}[theorem]{Example}

\theoremstyle{remark}


\newcommand{\Hom}{\mathrm{Hom}}

\newcommand{\RR}{\mathbb{R}}

\newcommand{\bb}{\mathbf{b}}

\newcommand{\ext}{\mathrm{ex}}
\newcommand{\HH}{\mathcal{H}}

\newcommand{\cP}{\mathcal{P}}

\newcommand{\e}{\epsilon}

\newcommand{\mockalph}[1]{}

\usepackage{hyperref}
\usepackage{tikz-cd}
\usepackage{tikz}

\usepackage{ucs}
\usepackage{tkz-tab}
\usepackage{caption}
\usepackage{latexsym}
\usepackage{subcaption}
\usepackage{tcolorbox}
\usepackage{graphicx}

\tikzset{
	vertex/.style={circle, fill=black, inner sep=1,  minimum size=1.5mm},
	vertical align/.style={
		baseline=(current bounding box.center)
	},
	position/.style args={#1:#2 from #3}{
		at=(#3.#1), anchor=#1+180, shift=(#1:#2)
	},
}
\tikzset{
	vertexred/.style={circle, fill=red, inner sep=0pt, minimum size=2mm, anchor=center},
	every label/.append style={rectangle, outer sep=3pt},
}

\tikzset{
	vertexblue/.style={circle, fill=blue, inner sep=0pt, minimum size=2mm, anchor=center},
	every label/.append style={rectangle, outer sep=3pt},
}

\title{On the extremal number of incidence graphs}
\author{
Jisun Baek\thanks{Department of Mathematics, Yonsei University,  South Korea. Email: {\tt baek\_jisun@yonsei.ac.kr}. Research supported by Samsung STF Grant SSTF-BA2201-02 and the National Research Foundation of Korea (NRF) Grants MSIT NRF-2022R1C1C1010300 and NRF-2021R1A2C1009639.
}
\and
David Conlon\thanks{Department of Mathematics, California Institute of Technology, United States. Email:{\tt dconlon@caltech.edu}. Research supported by NSF Awards DMS-2054452 and DMS-2348859.
}
\and
Joonkyung Lee\thanks{Department of Mathematics, Yonsei University,  South Korea. Email: {\tt joonkyunglee@yonsei.ac.kr}. Research supported by Samsung STF Grant SSTF-BA2201-02, the Yonsei University Research Fund 2023-22-0125 and the National Research Foundation of Korea (NRF) Grant MSIT NRF-2022R1C1C1010300.
}
}
\date{}

\begin{document}
\maketitle

\begin{abstract}
Given a graph $H$ and a natural number $n$, the extremal number $\ext(n, H)$ is the largest number of edges in an $n$-vertex graph containing no copy of $H$. In this paper, we obtain a general upper bound for the extremal number of generalised face-incidence graphs, a family which includes the standard face-incidence graphs of regular polytopes. This builds on and generalises work of Janzer and Sudakov, who obtained the same bound for hypercubes and bipartite Kneser graphs, and allows us to confirm a conjecture of Conlon and Lee on the extremal number of $K_{r,r}$-free bipartite graphs for certain incidence graphs. 

In their work, Janzer and Sudakov showed that such an upper bound on the extremal number holds whenever the graph $H$ satisfies a certain percolation property which captures an appropriate sequence of repeated applications of the Cauchy--Schwarz inequality, a property which they then verify for hypercubes and bipartite Kneser graphs. This percolation property bears close resemblance to a property that arose in earlier work of Conlon and Lee on weakly norming graphs. In this latter work, Conlon and Lee developed a method for controlling repeated applications of the Cauchy--Schwarz inequality based on the properties of reflection groups, which then allowed them to isolate a broad family of weakly norming graphs. Here, we develop this method further, casting it in a purely algebraic form that allows us not only to combine it with the Janzer--Sudakov result and obtain the desired result about the extremal number of incidence graphs, but also to simplify the proofs of both the Conlon--Lee result on weakly norming graphs and a related result of Coregliano.
\end{abstract}

\section{Introduction}

The Cauchy--Schwarz inequality is often used in extremal combinatorics to bound the number of discrete structures of a given type. Its use is so pervasive that several techniques have been developed that allow one to automate, in some appropriate sense, repeated applications of the inequality. One example of this is the flag algebra technique developed by Razborov~\cite{R07}, which allows one to find such sequences of inequalities computationally through the use of semidefinite programming. This is arguably one of the most important developments in extremal combinatorics this century and has allowed researchers to make significant progress on a range of notorious problems (see, for example,~\cite{BT11, HHKNR12, R08}).

Our focus in this paper is on another method for automating repeated applications of the Cauchy--Schwarz inequality developed by Conlon and Lee~\cite{CL16}. This method, which we call the \emph{reflection group method}, applies only to graphs of a certain type, which we call reflection graphs, but, for those graphs, applications of the Cauchy--Schwarz inequality can be encoded algebraically in terms of reflections within an appropriate finite reflection group, which allows one to analyse them in an efficient way. This method has proved surprisingly useful for studying problems on graph homomorphism inequalities, such as Sidorenko's conjecture, where it has led to a proof that the conjecture holds for large enough blow-ups of any fixed bipartite graph~\cite{CL20}, and a question of Lov\'asz and Hatami on categorising graph norms~\cite{H10,L08}, where it has suggested a possible classification~\cite{CL16}.

To get a better idea of how the reflection group method works, it will help to describe how the Cauchy--Schwarz inequality can be applied to graphs. A key role in applying Cauchy--Schwarz in this context is played by a particular type of automorphism that we call a cut involution, which is essentially a reflection of the graph into itself. 
Formally, a \emph{cut involution} of a graph $H$ is an involutary automorphism $\phi$ paired with a tripartition $L\cup F\cup R$ of the vertex set $V(H)$ such that $\phi$ swaps vertices in $L$ and $R$ while fixing all the vertices in the vertex cut $F$. An application of the Cauchy--Schwarz inequality then gives that
\begin{align*}
    t(H,G)\cdot t(H[F],G) \geq t(H[L\cup F],G)^2, 
\end{align*}
where $t(K,G)$ is the \emph{homomorphism density} $|\Hom(K,G)|/|V(G)|^{|V(K)|}$ of $K$ in $G$ and $H[U]$ is the induced subgraph of $H$ on $U\subseteq V(H)$. That is, cut involutions of $H$ give rise, through the Cauchy--Schwarz inequality, to graph homomorphism inequalities between its subgraphs. For any reflection graph, a family which we will define formally in~\Cref{sec:prelim} below, there is an embedding of the graph in some Euclidean space such that cut involutions correspond to geometric reflections across hyperplanes through the origin. In turn, this allows us to leverage the properties of finite reflection groups when studying sequences of graph homomorphism inequalities between subgraphs of reflection graphs.

The main result of this paper is an application of the reflection group method to the study of extremal numbers. Given a graph $H$ and a natural number $n$, the {\it extremal number} $\ext(n, H)$ is the largest number of edges in an $n$-vertex $H$-free graph. While the classical Erd\H{o}s--Stone--Simonovits theorem determines the asymptotic behaviour of $\mathrm{ex}(n,H)$ for non-bipartite $H$, determining the asymptotics of $\mathrm{ex}(n,H)$ for bipartite $H$ remains an important and difficult open problem, one that has been solved for only a handful of examples.

Quite recently, a novel approach to the study of extremal numbers of bipartite graphs, making use of graph homomorphism inequalities, has been developed by Janzer and Sudakov~\cite{JS24} and, independently, Kim, Lee, Liu and Tran~\cite{KLLT24}. To give a rough sketch of the idea, note that if an $n$-vertex graph $G$ is $H$-free for a particular bipartite graph $H$, then all homomorphic copies of $H$ must be \emph{degenerate}, in the sense that at least two vertices must map to the same vertex. The key observation of~\cite{JS24,KLLT24} is that, provided $H$ has a particular form, repeated applications of the Cauchy--Schwarz inequality allow us to conclude that there are many degenerate copies where more and more vertices coincide until, eventually, we arrive at the fact that there are many degenerate copies of $H$ where all of the vertices on one side of its bipartition agree. That is, there are many stars of a particular size in $G$. We can then trade this off against an easily obtained upper bound for the number of such stars to obtain an upper bound for $\ext(n, H)$. 

In their paper~\cite{JS24}, Janzer and Sudakov showed that their process applies to any graph $H$ which satisfies a certain combinatorially defined percolation property and, moreover, that this property is satisfied by hypercubes and by bipartite Kneser graphs, allowing them to improve the upper bounds for the extremal numbers of these graphs. In particular, their bounds confirm the following conjecture of Conlon and Lee~\cite{CL21} for these graphs.

\begin{conjecture}\label{conj:CL}
    If $H$ is a $K_{r,r}$-free bipartite graph with degree at most $r$ on one side, then there exists $c=c_H>0$ such that $\ext(n,H)=O(n^{2-1/r-c})$.
\end{conjecture}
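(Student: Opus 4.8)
The plan is to prove \Cref{conj:CL} in the cases treated here by verifying, for each generalised face-incidence graph $H$, the combinatorial percolation property of Janzer and Sudakov~\cite{JS24} and then invoking their theorem, which converts that property into the bound $\ext(n,H)=O(n^{2-1/r-c})$. Essentially all of the work therefore lies in establishing the percolation property, and the key point is that these incidence graphs are reflection graphs: each admits a faithful embedding into a Euclidean space for which the relevant cut involutions act as orthogonal reflections in hyperplanes through the origin, so that the group generated by these involutions is a finite reflection group acting on $V(H)$.

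The first step is to recast the reflection group method of~\cite{CL16} in a purely algebraic form. Rather than tracking the tripartitions $L\cup F\cup R$ directly, I would encode each cut involution by its reflection and each stage of the iterated Cauchy--Schwarz argument by the subgroup generated so far, so that the percolation requirement becomes a statement about reflection groups: that some admissible sequence of the available reflections, applied to a generic homomorphic copy of $H$, forces all vertices on the bounded-degree side of the bipartition of $H$ to coincide, that is, produces a star. This reformulation is the technical core of the paper; a bonus of having it in hand is that the weakly norming classification of~\cite{CL16} and Coregliano's related result become instances of the same group-theoretic assertion, so that they can be reproved more cleanly.

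The second step is to verify this for generalised face-incidence graphs. For a regular polytope, the vertices of the face-incidence graph are faces of two prescribed dimensions with incidence as edges, the automorphism group contains the full Coxeter reflection group, and each face paired with a mirror hyperplane yields a cut involution; the generalised version abstracts exactly these features. I would then show, using flag-transitivity of the Coxeter group together with an orbit or dimension count to rule out obstructions, that iterating these reflections over a suitable set of mirrors collapses all incidences on one side onto a single face, which is precisely the star configuration demanded by the percolation property.

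Finally I would check the two hypotheses of \Cref{conj:CL} for these incidence graphs directly: that the degree on the relevant side is at most $r$, which is immediate from the local incidence structure and in fact defines $r$, and that $H$ is $K_{r,r}$-free, which follows because the local incidence structure of the polytopes in question forbids the complete bipartite configuration $K_{r,r}$. Combining these with the Janzer--Sudakov bound yields $\ext(n,H)=O(n^{2-1/r-c})$ for some $c=c_H>0$, confirming the conjecture for this family. The step I expect to be the main obstacle is the percolation verification: a naive sequence of Cauchy--Schwarz applications typically stalls before reaching a star, and it is exactly the finite-reflection-group structure — having enough mirrors in sufficiently general position — that must be exploited to push the argument all the way through.
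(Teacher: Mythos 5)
The paper does not prove \Cref{conj:CL}; it confirms the conjecture only for certain incidence graphs, and your proposal misrepresents both the structure of that confirmation and the mechanism behind it. The gap lies in your claim that the Janzer--Sudakov theorem ``converts [the percolation] property into the bound $\ext(n,H)=O(n^{2-1/r-c})$.'' It does not. What it gives (\Cref{thm:strong percolation}) is $\ext(n,H)=O(n^{2-c'})$ with $c' = \frac{v(H)-t-1}{e(H)-t}$ and $t=\max\{|A|,|B|\}$; whether $c' > 1/r$ is a separate arithmetic question that must be checked and that frequently fails. The paper does this check in \Cref{cor:balanced} and \Cref{ex:S_n}: for an $r$-regular generalised face-incidence graph one has $c' - \tfrac{1}{r} = \tfrac{v(H)-2r}{v(H)(r-1)r}$, which is positive precisely when $v(H)>2r$, i.e., when $H\neq K_{r,r}$; but in \Cref{ex:S_n} the $(m,a,b)$-inclusion graphs all strongly percolate, yet the resulting exponent beats F\"uredi's only in a narrow range of $(a,b)$ (for instance, never when $b=O_m(1)$). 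So \Cref{conj:CL} is confirmed for a subfamily of generalised face-incidence graphs, not for all of them. Your proposed endgame --- checking the hypotheses of the conjecture (degree at most $r$ on one side, $K_{r,r}$-freeness) --- establishes only that the conjecture applies, not that its conclusion holds; the actual closing step in the paper is the exponent comparison, and the $K_{r,r}$ exclusion drops out of that arithmetic automatically.

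Two further points. Your description of the degeneracy-spreading process fixes the wrong side: the definition requires a strong percolating sequence starting from either part, and the exponent in \Cref{thm:strong percolation} uses the \emph{larger} part $t$, not the bounded-degree side; the paper's concluding remarks explicitly note that one cannot steer the degeneracy onto a chosen side, which is exactly why the bound is weakest for unbalanced bipartitions. And the percolation verification itself is not carried out by ``flag-transitivity \ldots together with an orbit or dimension count.'' The paper proves \Cref{main} via the algebraic Bruhat-order apparatus --- stacks, the strong exchange property, the subword and lifting properties --- after identifying vertices and edges of the reflection graph with cosets $W/W_I$ (\Cref{prop:correspondence}) and translating folding maps into this setting. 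Indeed, deliberately skipping the Euclidean embedding of reflection graphs is presented as a methodological contribution of the paper; your sketch reintroduces the very geometric detour the paper removes.
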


The motivation behind this conjecture comes from a classical result of F\"uredi~\cite{Fu91} (see also~\cite{AKS03}) saying that if $H$ is a bipartite graph with degree at most $r$ on one side, then $\ext(n,H)=O(n^{2-1/r})$. By a celebrated result of Koll\'ar, R\'onyai and Szab\'o~\cite{KRSz96}, this bound is known to be tight when $H = K_{r,s}$ for $s$ sufficiently large in terms of $r$ (see~\cite{Bu24} for a recent improvement on the quantitative aspects of this theorem) and it is conjecturally tight when $H = K_{r,r}$, though this is somewhat controversial. What Conjecture~\ref{conj:CL} is saying is that the only reason F\"uredi's bound should be tight is if $H$ contains $K_{r,r}$. For $r = 2$, this conjecture was proved in the original paper of Conlon and Lee~\cite{CL21} and an elegant alternative proof was subsequently found by Janzer~\cite{Ja19}. More recently, a weak version of the general conjecture stated in~\cite{CJL21}, asking for $\ext(n,H)=o(n^{2-1/r})$ instead of $\ext(n,H)=O(n^{2-1/r-c})$, was proved by Sudakov and Tomon~\cite{ST20}.

It was already noted in~\cite{JS24} that there is a similarity between the percolation property they need and a percolation property shown in an earlier paper of Conlon and Lee~\cite{CL16} to imply that a graph is weakly norming, which roughly means that there are a collection of very strong homomorphism inequalities between the graph and its subgraphs. The reflection group method was developed in that same paper~\cite{CL16} to show that a certain family of graphs, the aforementioned reflection graphs, satisfies the required percolation property. As such, it is natural to ask whether the reflection group method also applies to the percolation property used by Janzer and Sudakov. We show that the answer is yes, using the reflection group method to show that a broad family of reflection graphs satisfy their percolation property. We may then use this and their main result~\cite[Theorem~2.16]{JS24} to give an upper bound for the extremal numbers of these graphs. 

To state one such result, let~$\mathcal{P}$ be a regular polytope and, for $k<r$, let $H$ be the bipartite graph between the set of all $k$-faces and the set of all $r$-faces of $\mathcal{P}$, where an $r$-face and a $k$-face are adjacent if and only if one contains the other. We call this graph the \emph{$(k,r)$-incidence graph} of $\cP$. For instance, the bipartite Kneser graph which has an edge between a $k$-subset and a $(d-k)$-subset of $[d]$ if and only if one includes the other is the $(k-1,d-k-1)$-incidence graph of a $(d-1)$-dimensional simplex. 

\begin{theorem}\label{thm:incidence_main}
    Let $H$ be the $(k,r)$-incidence graph of a regular polytope $\cP$ and let $t$ be the order of the largest part in its bipartition. Then
    $\mathrm{ex}(n,H) =O(n^{2-c})$, where $c = \frac{v(H)-t-1}{e(H)-t}$.
\end{theorem}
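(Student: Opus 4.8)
The plan is to deduce the theorem from the main result of Janzer and Sudakov, \cite[Theorem~2.16]{JS24}, which gives precisely the bound $\ext(n,H)=O(n^{2-c})$ with $c=\frac{v(H)-t-1}{e(H)-t}$ for every bipartite graph $H$ satisfying their percolation property, $t$ denoting the order of the larger part of the bipartition. The whole task is therefore to show that every $(k,r)$-incidence graph of a regular polytope is a reflection graph and that reflection graphs of this kind satisfy the Janzer--Sudakov percolation property; the second point is where the reflection group method, in the algebraic form developed in this paper, does the work.

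First I would set up the incidence graph group-theoretically. The symmetry group $W$ of a $d$-dimensional regular polytope $\cP$ is a finite Coxeter group whose Coxeter diagram is a path $s_0-s_1-\cdots-s_{d-1}$, and for each $i$ the set of $i$-dimensional faces of $\cP$ is canonically identified with the coset space $W/W_{\hat\imath}$, where $W_{\hat\imath}=\langle s_j:j\neq i\rangle$ is the maximal standard parabolic omitting $s_i$; two faces are incident exactly when the cosets representing them meet. Thus the $(k,r)$-incidence graph $H$ is the bipartite coset graph on $W/W_{\hat k}\sqcup W/W_{\hat r}$, on which $W$ acts by graph automorphisms, transitively on each side of the bipartition. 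Realising the Coxeter complex of $W$ in Euclidean space in the usual way, a reflection $s$ of $W$, fixing a mirror $\Pi$, acts on $H$ as a cut involution: the faces of $\cP$ fixed setwise by $s$ --- those lying in $\Pi$ together with those bisected by $\Pi$ --- form a vertex cut $F$ of $H$, while the faces lying strictly on the two sides of $\Pi$ form $L$ and $R$, which $s$ interchanges. This exhibits $H$ as a reflection graph, so the cut involutions witnessing the relevant Cauchy--Schwarz steps are reflections of $W$.

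The core of the argument is to show that this reflection-group structure forces the percolation property. I would recast the percolation process algebraically: the partially contracted graphs that arise are quotients of $H$ by subgroups of $W$ generated by reflections, and a single Cauchy--Schwarz step enlarges such a subgroup by one reflection. The percolation property then amounts to the following: starting from the reflection subgroup generated by a single reflection that identifies two vertices on the larger side of the bipartition, one can keep adjoining reflections --- each a legitimate cut involution of the current quotient --- until the resulting subgroup collapses the entire larger side to a single vertex. In the language of \cite{JS24}, this says that the auxiliary graph whose vertices are the partial contractions of $H$, with an edge for each single Cauchy--Schwarz step, is connected. One establishes this by exploiting that $W$ is generated by the simple reflections, acts transitively on the chambers of its Coxeter complex, and acts transitively on each side of $H$, so that the reflection subgroups can be grown step by step toward one whose action is transitive on the larger side. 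I expect this to be the main obstacle: one has to pin down the right algebraic invariant of a partial contraction so that a single Cauchy--Schwarz step is exactly a single admissible reflection, and then verify the growth and connectivity claim uniformly over the list of regular polytopes --- the simplices ($W=A_d$), the cubes and cross-polytopes ($W=B_d$), the regular polygons ($W$ dihedral), and the exceptional $3$- and $4$-dimensional polytopes ($W=H_3,F_4,H_4$) --- where the low-rank exceptional and dihedral cases may need separate, possibly computer-assisted, checks.

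Finally, once $H$ is known to satisfy the percolation property, \cite[Theorem~2.16]{JS24} applies directly and yields $\ext(n,H)=O(n^{2-c})$ with $c=\frac{v(H)-t-1}{e(H)-t}$, where $v(H)$ is the number of $k$-faces plus the number of $r$-faces of $\cP$, $e(H)$ is the number of incident $(k\text{-face},r\text{-face})$ pairs, and $t$ is the larger of the two face counts; these are read off from the face vector of $\cP$, and substituting them gives exactly the stated bound.
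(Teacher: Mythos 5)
Your overall architecture is the right one and matches the paper: reduce to \cite[Theorem~2.16]{JS24} by verifying that the $(k,r)$-incidence graph strongly percolates, and observe that this graph is the coset bigraph on $W/W_{\hat k}\sqcup W/W_{\hat r}$ with reflections of $W$ acting as cut involutions (the paper proves the latter as \Cref{thm:cut_involution}, in the purely algebraic form $\phi_t(wW_i)=twW_i$, using the order-preserving projection $P^I$ and the lifting property rather than the Euclidean picture of mirrors bisecting faces). However, the core of the argument --- actually proving strong percolation --- is where your proposal has a genuine gap. The sets $J_i$ appearing in a strong percolating sequence are arbitrary subsets of $W/W_I$ that grow via the one-sided folds $J\mapsto J^\pm(t)$; they are not cosets of reflection subgroups, and a fold does not ``adjoin one reflection to a subgroup.'' So your proposed invariant does not track the process, and you rightly note that you don't see how to finish, falling back on a case analysis over the classification $A_d,B_d,H_3,F_4,H_4$ and dihedral groups, possibly computer-assisted.

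The paper's proof is uniform in the Coxeter system and needs none of this. The key constructs are the Bruhat order on $W^I$, the ``stacks'' $U_L=\{wW_I:\ell(w^I)\le L\}$, and \Cref{thm:induction}, which shows a folding sequence using only simple reflections carries any $J\supseteq U_L$ to some $J'\supseteq U_{L+1}$; iterating from $L=1$ to the top length percolates to all of $W/W_I$. For the starting pair $\{W_I,wW_I\}$ one first folds it, via the simple reflections appearing in a reduced word for $w$, down to $U_1=\{W_I,sW_I\}$, where $s$ is the unique simple reflection outside $I$; and the ``strong'' condition --- each $J_i$ meeting both $L^I_t\cup F^I_t$ and $R^I_t\cup F^I_t$ --- is ensured automatically because \Cref{lem:stack} keeps the stack $U_1$ inside every subsequent $J_i$, so that $W_I$ and $sW_I$ always land on opposite (or fixed) sides. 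The tools used are the length parity (\Cref{parity}), the strong exchange property (\Cref{thm:strongexchange}), \Cref{cor:generating}, the lifting property (\Cref{lifting}), and the order-preserving projection (\Cref{prop:order_preserving}) --- all classification-free facts about Coxeter systems --- and this is precisely what makes the ``only if'' direction of \Cref{main} ($|I|=|S|-1$ is necessary) drop out as well, which your proposal does not address.
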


The family of $(k,r)$-incidence graphs of regular polytopes does not include the hypercubes, but this latter family is included in a slightly more general class, which we call generalised face-incidence graphs, to which our results also apply. Since the precise definition of these graphs requires some notation on reflection groups, we will hold back on stating it formally until the next section. 

Besides our chief purpose of extending the results of Janzer and Sudakov, we have also taken the opportunity to explain the reflection group method in a more accessible manner. In the original paper~\cite{CL16}, Conlon and Lee took a slight detour, embedding reflection graphs into Euclidean space before casting them in a purely algebraic setting. Here we skip this step, showing that one can jump straight to the algebraic setting. As a consequence, we are able to give a somewhat simpler proof for the main result in~\cite{CL16}, as well as for another related result of Coregliano~\cite{C24}.

The remainder of the paper is organised as follows. In the next section, we give a brief introduction to finite reflection, or Coxeter, groups. In particular, we will be able to define both reflection graphs and generalised face-incidence graphs, thus allowing us to state our real main result, \Cref{thm:main}. We also note a range of useful results about Coxeter groups, culminating in a result saying that reflections in Coxeter groups yield cut involutions in the corresponding reflection graphs. \Cref{sec:percseq} contains the proofs of our main results. We first warm up by giving a short proof of the result from~\cite{CL16} saying that reflection graphs are weakly norming. We then proceed to the proof of~\Cref{thm:main}, after which we spend some time studying specific examples and analysing when they satisfy~\Cref{conj:CL}. We conclude with some brief further remarks.

\section{Preliminaries}\label{sec:prelim}

A \emph{finite reflection group}, usually denoted by the letter $W$, is a finite subgroup of the general linear group $\mathbf{GL}(n,\RR)$ generated by reflections across hyperplanes passing through the origin.
The set of all reflections in $W$, denoted by $T$, then forms a subset of $W$. 
Among these reflections, one may identify particular generating sets $S$, known as sets of \emph{simple reflections}, that play a central role in the theory. There is a geometric way to view these sets of simple reflections, but we shall discuss them in purely algebraic terms. When interpreted in this way, reflection groups are often referred to as \emph{Coxeter groups}.

Formally, a \emph{Coxeter group} $W$ is a group with presentation $\langle s_1,s_2,\dots,s_n \vert (s_is_j)^{m_{ij}}=1\rangle$, where $m_{ii}=1$ for all $i$ and $m_{ij}=m_{ji}$ is an integer greater than $1$ or $\infty$.\footnote{$m_{ij}=\infty$ means that there is no relation between $s_i$ and $s_j$, a case which never occurs for $W$ finite.} The pair $(W, S)$ where $W$ is a Coxeter group with $S=\{s_1,\dots,s_n\}$ its set of generators is then called a \emph{Coxeter system}. 
For us, a Coxeter system will consist of a finite reflection group, which, by the fundamental work of Coxeter~\cite{C34}, is isomorphic to a finite Coxeter group $W$, together with a generating set $S$ corresponding to a set of simple reflections in that finite reflection group. Because of this correspondence, given a Coxeter system $(W,S)$, we will refer to the elements $s\in S$ as \emph{simple reflections}. Throughout what follows, $(W,S)$ will always denote such a Coxeter system. 

We now come to the definition of reflection graphs, an algebraically-defined class of graphs to which we can apply the reflection group method. 
For $I\subseteq S$, let $W_I$ be the subgroup of $W$ generated by $I$, which is usually referred to as a \emph{parabolic subgroup}. Let $W/W_I$ denote the set of all left cosets of $W_I$.
For $I,J\subseteq S$, the \emph{$(I,J;W,S)$-graph} is the bipartite graph between $W/W_I$ and $W/W_J$ with $wW_I$ and $wW_J$ adjacent for all $w \in W$. That is, two cosets are adjacent if and only if they intersect. A \emph{reflection graph} is now any graph that is isomorphic to the $(I,J;W,S)$-graph for some $(W,S)$ and $I,J\subseteq S$. 

A well-known fact from the theory of reflection groups (see, for example,~\cite[Theorem~3D7]{MS02} and~\cite[Theorem~1.1]{CL16}) is that the $(k,r)$-incidence graph of a regular polytope $\cP$ is isomorphic to an $(I,J;W,S)$-graph, where $W$ is the underlying symmetry group of $\cP$ and $|I|=|J|=|S|-1$.
More generally, we use the term \emph{generalised face-incidence graph} to refer to any graph which is isomorphic to a $(I,J;W,S)$-graph with $|I|=|J|=|S|-1$. As noted in the introduction, hypercubes are not face-incidence graphs of any regular polytopes, but they are generalised face-incidence graphs (see~\cite[Example 4.14]{CL16}), so this latter family is indeed somewhat more general. Our main result, which therefore subsumes~\Cref{thm:incidence_main}, is now as follows.

\begin{theorem}\label{thm:main}
    Let $H$ be a generalised face-incidence graph and let $t$ be the order of the largest part in its bipartition. Then
    $\mathrm{ex}(n,H) =O(n^{2-c})$, where $c = \frac{v(H)-t-1}{e(H)-t}$.
\end{theorem}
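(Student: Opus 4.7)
The plan is to show that every generalised face-incidence graph $H$ satisfies the combinatorial percolation property underlying~\cite[Theorem 2.16]{JS24}; the bound on $\ext(n,H)$ stated in~\Cref{thm:main} is then exactly what that theorem delivers when the percolation terminates at a star of size $t$.

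I would begin by fixing notation. Write $H$ as the $(I,J;W,S)$-graph with $I = S \setminus \{s_I\}$ and $J = S \setminus \{s_J\}$ for uniquely determined simple reflections $s_I, s_J \in S$, so that $W_I$ and $W_J$ are the maximal parabolic subgroups of $W$ picked out by $I$ and $J$. Let $A = W/W_I$ and $B = W/W_J$ be the two sides of the bipartition of $H$ and, without loss of generality, take $|B| = t$. By the Coxeter-theoretic results to be developed in~\Cref{sec:prelim}, every reflection $r \in T$ induces a cut involution of $H$: left multiplication by $r$ acts simultaneously on $A$ and $B$, fixing precisely those cosets that $r$ stabilises and swapping the rest in pairs, with the fixed cosets forming a vertex cut of $H$.

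The heart of the argument is to exhibit a sequence of reflections whose cut involutions implement the Janzer--Sudakov percolation and reduce $H$ to a star $K_{1,t}$ centred on a vertex of the $A$-side. I would model this on the warm-up proof (promised in~\Cref{sec:percseq}) of the Conlon--Lee fact that reflection graphs are weakly norming. Starting from the base edge $(W_I, W_J) \in E(H)$, one inductively selects a reflection whose cut involution identifies one still-free vertex of $A$ with another, applies the corresponding Cauchy--Schwarz step, and iterates until only a single coset of $A$ survives, joined to all $t$ of its neighbours in $B$. The hypothesis $|I|=|J|=|S|-1$ is essential at this point: maximality of $W_I$ (and symmetrically $W_J$) is what forces the $A$-side to collapse to a single coset after enough reflections have been applied, whereas for a general $(I,J;W,S)$-graph the process could stall at an intermediate parabolic subgroup.

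The remaining work is bookkeeping. Once the percolation is verified, \cite[Theorem~2.16]{JS24} compares a degenerate homomorphism count of $H$ in an $H$-free host graph $G$ against the number of copies of $K_{1,t}$ in $G$, which is at most $n(e(G)/n)^t$ by convexity; solving the resulting inequality for $e(G)$ gives $e(G) = O(n^{2-c})$ with $c = (v(H)-t-1)/(e(H)-t)$. The main obstacle is the middle step of producing a sequence of reflections whose induced cut involutions satisfy the precise combinatorial conditions required by~\cite{JS24}; this is where the reflection group machinery, and in particular the Coxeter-theoretic fact that any two maximal cosets can be connected by a reduced word in the simple reflections, is doing the real work. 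The residual technical check is that the Janzer--Sudakov percolation rules, which are slightly more restrictive than the Conlon--Lee rules used for weakly norming graphs, are preserved by this same sequence of reflections.
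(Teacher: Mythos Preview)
Your high-level plan---cast the Janzer--Sudakov percolation in algebraic terms via \Cref{thm:cut_involution} and exploit the fact that $W_I$, $W_J$ are \emph{maximal} parabolics---matches the paper's approach. But two aspects of the Janzer--Sudakov condition are mischaracterised in a way that hides the real content of the argument.

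First, the percolation you describe is the wrong one. A strong vertex-percolating sequence in the sense of~\cite{JS24} does not start from a single edge and collapse $A$ to one coset; it starts from an \emph{arbitrary pair} $\{uW_I, wW_I\}$ of distinct cosets on one side and \emph{grows} to all of $W/W_I$. The quantifier over all starting pairs is essential: the degeneracy-spreading step in~\cite{JS24} applies to whatever pair of vertices happens to coincide in a degenerate homomorphism, and we have no control over which pair that is. So the construction must take the form ``for every $w\in W^I\setminus\{e\}$, build a sequence beginning at $\{W_I, wW_I\}$'', and the first phase of the proof---using the simple reflections appearing in a reduced word for $w$ to reduce an arbitrary such pair to the stack $\{W_I, s_IW_I\}$---is genuinely new and has no analogue in the Conlon--Lee edge-percolation argument.

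Second, the ``slightly more restrictive'' condition you relegate to a residual check is the crux. At every step one must verify that the current set $J_i$ meets both $L\cup F$ and $R\cup F$ for the chosen reflection (otherwise Cauchy--Schwarz eliminates the degeneracy rather than spreading it). It is precisely here that $|I|=|S|-1$ is used: with a single missing simple reflection $s_I$, one can arrange that $W_I$ lies in $F$ for every $s\in I$ and in $L$ for $s_I$, so the stack $\{W_I, s_IW_I\}$ witnesses the required intersection at every subsequent step. When $|I|\le |S|-2$ this fails, and the paper in fact proves that no strong percolating sequence exists in that case. So the maximality hypothesis does not merely make the process terminate; it is what guarantees the ``strong'' condition can be maintained throughout, and this cannot be obtained by reusing the same sequence of reflections as in the weakly norming proof.
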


The proof of this result has two main steps. First, we prove that every reflection in $W$ naturally defines a cut involution of the $(I,J;W,S)$-graph, no matter what $I$ and $J$ are. In fact, we will prove a slightly more general statement concerning $r$-uniform hypergraphs, which we will henceforth refer to as \emph{$r$-graphs}. 
To describe the context of this more general result, let $I_1,\dots,I_r$ be subsets of $S$. For brevity, we shall write $W_i:=W_{I_i}$ in what follows. Then the \emph{$(I_1,\dots,I_r;W,S)$-graph} is the $r$-partite $r$-graph on $(W/W_1)\cup (W/W_2)\cup\cdots\cup (W/W_r)$ whose edges are all $(wW_{1},\dots,wW_{r})$ with $w \in W$. A \emph{cut involution} of an $r$-graph $\HH$ is an involutary automorphism of $\HH$ paired with a tripartition $L\cup F\cup R$ of $V(\HH)$ that swaps $L$ and $R$ and where the fixed point set $F$ is a \emph{vertex cut} in the sense that no $r$-edge of $\HH$ intersects both $L$ and $R$. 

To state our result about how reflections define cut involutions, whose proof will occupy the rest of \Cref{sec:prelim}, we also need to formally define reflections in Coxeter groups. We have already noted that the generating set $S$ in a finite Coxeter system can be identified with a set of simple reflections in a corresponding finite reflection group. The  reflections in that reflection group correspond exactly to the elements of the set $T := \{wsw^{-1} : s \in S, w \in W\}$, 
which we now refer to as \emph{reflections}. It is easy to verify that $S\subseteq T$ and $t^2=e$ for every $t\in T$. 

\begin{theorem}\label{thm:cut_involution}
    Let $t\in T$ be a reflection in $W$. Then the map $\phi_t(wW_i):=twW_i$ is a cut involution of the $(I_1,\dots,I_r;W,S)$-graph, where $W_i=W_{I_i}$ is the parabolic subgroup generated by $I_i$.
\end{theorem}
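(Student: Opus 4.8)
\emph{Proof plan.} I would split the statement into a routine part---that $\phi_t$ is a well-defined involutory automorphism---and its real content, the construction of the tripartition $L\cup F\cup R$, building the latter from the standard decomposition of $W$ into the two sides of the reflection $t$.

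\textbf{The routine part.} Left multiplication by $t$ permutes the left cosets of each $W_i$, so $\phi_t$ is a well-defined map on $V:=(W/W_1)\cup\cdots\cup(W/W_r)$ that preserves each part $W/W_i$. It sends the edge $(wW_1,\dots,wW_r)$ to $(twW_1,\dots,twW_r)$, the edge indexed by $tw$, and $w\mapsto tw$ is a bijection of $W$; hence $\phi_t$ is an automorphism of the $(I_1,\dots,I_r;W,S)$-graph. Finally $\phi_t^2(wW_i)=t^2wW_i=wW_i$ since $t^2=e$, so $\phi_t$ is an involution (possibly trivial when some $W_i=W$, which does no harm). It then remains to produce the tripartition.

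\textbf{The two sides of $t$, and monochromaticity of cosets.} I would write $t=s_\beta$ for the positive root $\beta$ to which $t$ corresponds and use the basic fact $\ell(s_\beta w)>\ell(w)\iff w^{-1}\beta\in\Phi^+$; this partitions $W$ as $A_+\sqcup A_-$ with $A_\pm=\{w:w^{-1}\beta\in\Phi^\pm\}$, and $tA_+=A_-$, $tA_-=A_+$. The key lemma I would prove is that every coset is either fixed by $\phi_t$ or entirely contained in one of $A_+$, $A_-$. To see this, fix $i$ and $C=wW_i$, set $\gamma:=w^{-1}\beta$, and write $\Phi_{I_i}=\Phi\cap\mathrm{span}\{\alpha_s:s\in I_i\}$. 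If $\gamma\in\Phi_{I_i}$, then $w^{-1}tw=s_\gamma$ is a reflection lying in $W_i$, so $\phi_t$ fixes $C$. If $\gamma\notin\Phi_{I_i}$, then, using the standard fact that the parabolic subgroup $W_i$ permutes $\Phi^+\setminus\Phi_{I_i}$ (and preserves $\Phi_{I_i}$ setwise), for every $x\in W_i$ the root $(wx)^{-1}\beta=x^{-1}\gamma$ has the same sign as $\gamma$, so all elements of $C$ lie on the same side of $t$. Since the reflections of $W_i$ are exactly the $s_\delta$ with $\delta\in\Phi_{I_i}$, these two cases are complementary. Setting $F:=\{\text{cosets fixed by }\phi_t\}$, $L:=\{\text{cosets contained in }A_+\}$ and $R:=\{\text{cosets contained in }A_-\}$ then yields a partition $V=L\cup F\cup R$, and since $tA_+=A_-$ the involution $\phi_t$ interchanges $L$ and $R$ and fixes $F$ pointwise.

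\textbf{$F$ is a vertex cut, and the main obstacle.} This becomes immediate: every vertex of the edge $(wW_1,\dots,wW_r)$ contains the element $w$, so $wW_i\in L$ forces $w\in A_+$ and $wW_j\in R$ forces $w\in A_-$; as $A_+\cap A_-=\emptyset$, no edge can meet both $L$ and $R$. The only genuinely substantive step is therefore the monochromaticity lemma, which rests on the single classical fact that a parabolic subgroup fixes the positive roots outside its own root subsystem; granting that, everything else is bookkeeping, and the degenerate cases ($\gamma$ is never $0$ as $\beta\ne 0$, and $W_i=W$ simply throws the corresponding cosets into $F$) cause no trouble.
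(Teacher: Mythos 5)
Your proof is correct, but it takes a genuinely different route from the paper's. The paper's proof stays entirely within the combinatorial framework of Coxeter systems built up in \Cref{sec:prelim}: it defines $L_i, R_i, F_i$ by comparing $\ell((tw)^{I_i})$ with $\ell(w^{I_i})$, uses the lifting property (via \Cref{fixed points}) to identify $F$ with the fixed cosets, and invokes the order-preservation of the projection $P^I$ (\Cref{prop:order_preserving}) to show no edge crosses from $L$ to $R$. You instead pass to the root system, writing $t = s_\beta$, partitioning $W = A_+ \sqcup A_-$ via the sign of $w^{-1}\beta$, and proving a monochromaticity lemma: each coset $wW_i$ is fixed by $\phi_t$ when $w^{-1}\beta \in \Phi_{I_i}$ and otherwise lies entirely inside $A_+$ or $A_-$, by the classical fact that a standard parabolic $W_{I_i}$ permutes $\Phi^+\setminus\Phi_{I_i}^+$. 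Your cut property then becomes an immediate containment argument rather than an appeal to order-preservation of a projection. Both arguments are sound; yours is arguably crisper and more self-contained once one grants the root-system formalism, while the paper's fits the authors' stated aim of avoiding the geometric detour through root systems and keeping the treatment in the purely combinatorial language of lengths and the Bruhat order. It is worth noting (though not needed for the theorem) that the two tripartitions coincide: a coset with minimal representative $w^{I_i}$ lies entirely in $A_+$ exactly when $\ell((tw)^{I_i}) > \ell(w^{I_i})$, since if instead $\ell((tw)^{I_i}) < \ell(w^{I_i})$ one can take $u = (tw)^{I_i}$ and observe that $tu \in wW_i \subseteq A_+$ forces $u^{-1}\beta \in \Phi^-$, while $\ell(tu) > \ell(u)$ forces $u^{-1}\beta \in \Phi^+$, a contradiction.
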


Second, by using this fact, we will translate certain `combinatorial' percolation processes on reflection graphs to an algebraic setting, thereby allowing us to apply the reflection group method and prove our main results, though the precise details will vary depending on the extremal problem we wish to consider. 
Those readers who are only interested in these applications may skip the proof of~\Cref{thm:cut_involution} and proceed straight to \Cref{sec:percseq}.

\subsection{Basic properties of Coxeter systems}

In this subsection, we give a brief overview of some of the basic properties of Coxeter systems, focusing on those properties that are needed for the proof of~\Cref{thm:main}. For a more thorough introduction, we refer the reader to either \cite{BB05} or \cite{H92}. 

The \emph{length} $\ell(w)$ of an element $w\in W$ is the minimum word length taken over all expressions for $w$ in terms of simple reflections, i.e., the minimum $k$ such that $w=s_1s_2\cdots s_k$ for some $s_i\in S$, $i=1,\dots,k$. A minimum-length expression for $w$ in terms of simple reflections is called a \emph{reduced word} or \emph{reduced expression} for~$w$, though we stress that this may not be unique.
It is an elementary fact that the parity of every expression for $w$ is the same.

\begin{lemma}[Lemma~1.4.1 in~\cite{BB05}]\label{parity}
    The map $w\mapsto (-1)^{\ell(w)}$ is a group homomorphism from $W$ to the multiplicative group $\{+1,-1\}$.
    In particular, every reflection $t=wsw^{-1}$, for $s\in S$ and $w\in W$, has odd length, so $\ell(w)\neq \ell(tw)$ mod $2$ for all $w \in W$ and $t \in T$.
\end{lemma}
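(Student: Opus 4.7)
The natural plan is to exhibit the sign map $\epsilon \colon W \to \{+1,-1\}$ directly via the Coxeter presentation rather than to attempt to verify the homomorphism property from the formula $w\mapsto (-1)^{\ell(w)}$ itself (which is circular, since showing multiplicativity is essentially the content of the lemma). Since $W$ is presented as $\langle s_1,\dots,s_n \mid (s_is_j)^{m_{ij}} = 1 \rangle$, by the universal property of group presentations, to define a homomorphism from $W$ to an abelian group $A$ it suffices to assign values $\epsilon(s_i) \in A$ that satisfy the defining relations. I would set $\epsilon(s_i) := -1$ for every $i$; then $(\epsilon(s_i)\epsilon(s_j))^{m_{ij}} = ((-1)(-1))^{m_{ij}} = 1$ for all $i,j$, so $\epsilon$ extends uniquely to a group homomorphism $\epsilon \colon W \to \{+1,-1\}$.

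The identification $\epsilon(w) = (-1)^{\ell(w)}$ is then immediate: writing $w = s_{i_1}s_{i_2}\cdots s_{i_{\ell(w)}}$ as a reduced expression and applying $\epsilon$ yields $\epsilon(w) = (-1)^{\ell(w)}$. As a byproduct, any two expressions for $w$ have the same parity (a fact used implicitly in the lemma's statement), since their common image under $\epsilon$ is $(-1)$ to the length of either expression.

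For the second assertion, let $t = wsw^{-1}$ be a reflection with $s \in S$ and $w \in W$. Because $\{+1,-1\}$ is abelian, we have $\epsilon(t) = \epsilon(w)\epsilon(s)\epsilon(w)^{-1} = \epsilon(s) = -1$, so $(-1)^{\ell(t)} = -1$ and $\ell(t)$ is odd. For arbitrary $w \in W$ and $t \in T$, applying $\epsilon$ to $tw$ gives
\[
(-1)^{\ell(tw)} \;=\; \epsilon(tw) \;=\; \epsilon(t)\epsilon(w) \;=\; -(-1)^{\ell(w)},
\]
so $\ell(tw) \not\equiv \ell(w) \pmod{2}$, as required.

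There is no serious obstacle here; the only conceptual point worth emphasising is the one flagged above, namely that one must construct $\epsilon$ abstractly from the presentation before one can interpret it as $(-1)^{\ell(\cdot)}$. Once that is done, the conclusions about reflections follow from the single observation that conjugation is trivial in any abelian target.
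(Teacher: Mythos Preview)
The paper does not actually prove this lemma; it is simply quoted from \cite{BB05} without argument. Your proof is correct and is exactly the standard one (indeed, it is essentially the proof given in Bj\"orner--Brenti): define the sign map on generators via the presentation, check the relations $(s_is_j)^{m_{ij}}=1$ are sent to $1$, identify the resulting homomorphism with $(-1)^{\ell(\cdot)}$ by evaluating on a reduced word, and read off the consequences for reflections using that the target is abelian. There is nothing to add.
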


We now state a fundamental combinatorial property of Coxeter systems, the so-called \emph{strong exchange property}. It states that whenever the length of $w$ is reduced by multiplying by a reflection $t\in T$ on the left, each expression $w=s_1s_2\cdots s_k$ reduces to one that deletes some $s_i$ while the order of the expression is preserved. If $s_i$ is deleted from $s_1s_2\cdots s_k$, then we simply write the resulting expression $s_1\cdots s_{i-1}s_{i+1}\cdots s_k$ as $s_1 \cdots \hat{s_i} \cdots s_k$. We remark that the proof relies on~\Cref{parity}.

\begin{theorem}[Theorem~1.4.3 in~\cite{BB05}]\label{thm:strongexchange}
    Let $w=s_1 s_2 \cdots s_k$, $s_i \in S$, and let $t \in T$. If $\ell(tw) < \ell(w)$, then $tw=s_1 \cdots \hat{s_i} \cdots s_k$ for some $i =1,2,\dots,k$.
    That is, $t=us_iu^{-1}$, where $u=s_1s_2\cdots s_{i-1}$.
\end{theorem}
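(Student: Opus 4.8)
Proof plan. The plan is to run the standard argument through the geometric (reflection) representation of $(W,S)$, following~\cite{BB05}. First I would recall this representation: let $V$ be the real vector space with basis $\{\alpha_s : s\in S\}$ of \emph{simple roots}, equipped with the symmetric bilinear form determined by $\langle\alpha_s,\alpha_{s'}\rangle=-\cos(\pi/m_{ss'})$, and let each $s\in S$ act on $V$ by the reflection $\sigma_s(\lambda)=\lambda-2\langle\alpha_s,\lambda\rangle\alpha_s$. One checks that the $\sigma_s$ satisfy the defining relations of $W$, so this yields a representation $W\to\mathbf{GL}(V)$. Write $\Phi=\{w(\alpha_s):w\in W,\,s\in S\}$ for the set of \emph{roots}; for $\beta=w(\alpha_s)\in\Phi$ the element $s_\beta:=w\sigma_sw^{-1}$ depends only on $\beta$ (not on the chosen $w,s$), it satisfies $s_\beta=s_{-\beta}$, and $\{s_\beta:\beta\in\Phi\}=T$. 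The two foundational facts I would invoke, both standard and both ultimately resting on~\Cref{parity} (see~\cite[Ch.~4]{BB05} or~\cite[Ch.~5]{H92}), are: (i) every root is either a nonnegative or a nonpositive combination of the simple roots, giving a partition $\Phi=\Phi^+\sqcup\Phi^-$ and, for each $t\in T$, a positive root $\beta_t\in\Phi^+$ with $s_{\beta_t}=t$; and (ii) for each $s\in S$, the reflection $\sigma_s$ sends $\alpha_s$ to $-\alpha_s$ and permutes $\Phi^+\setminus\{\alpha_s\}$, i.e.\ $\alpha_s$ is the unique positive root that $s$ makes negative.

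The heart of the matter is the lemma that, for every $w\in W$ and $t\in T$,
\[ \ell(tw)<\ell(w)\quad\Longleftrightarrow\quad w^{-1}(\beta_t)\in\Phi^-. \]
For ``$\Leftarrow$'', pick a \emph{reduced} expression $w=s_1\cdots s_\ell$ and set $\delta_0:=\beta_t$ and $\delta_j:=\sigma_{s_j}\cdots\sigma_{s_1}(\beta_t)$, so $\delta_0\in\Phi^+$ while $\delta_\ell=w^{-1}(\beta_t)\in\Phi^-$. Let $i$ be the least index with $\delta_i\in\Phi^-$; then $\delta_{i-1}\in\Phi^+$ and $\sigma_{s_i}(\delta_{i-1})=\delta_i\in\Phi^-$, so fact~(ii) forces $\delta_{i-1}=\alpha_{s_i}$, whence $\beta_t=\sigma_{s_1}\cdots\sigma_{s_{i-1}}(\alpha_{s_i})$, $t=s_{\beta_t}=(s_1\cdots s_{i-1})\,s_i\,(s_1\cdots s_{i-1})^{-1}$, and therefore $tw=s_1\cdots\hat{s_i}\cdots s_\ell$, an expression of length $<\ell(w)$. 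For ``$\Rightarrow$'', note that $\ell(tw)\neq\ell(w)$ by~\Cref{parity}; if one had $w^{-1}(\beta_t)\in\Phi^+$ then, writing $w':=tw$ and using $t(\beta_t)=-\beta_t$, one gets $(w')^{-1}(\beta_t)=-w^{-1}(\beta_t)\in\Phi^-$, so ``$\Leftarrow$'' applied to $(t,w')$ yields $\ell(w)=\ell(tw')<\ell(w')=\ell(tw)$, contradicting the hypothesis; hence $w^{-1}(\beta_t)\in\Phi^-$.

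Granting the lemma, the theorem follows by one more ``first sign change'' argument, now with the \emph{given} (possibly non-reduced) expression $w=s_1\cdots s_k$: by the lemma $w^{-1}(\beta_t)\in\Phi^-$, and defining $\delta_0:=\beta_t$ and $\delta_j:=\sigma_{s_j}\cdots\sigma_{s_1}(\beta_t)$ we have $\delta_k=w^{-1}(\beta_t)\in\Phi^-$ because $w^{-1}=s_k\cdots s_1$ holds regardless of reducedness. Taking the least $i$ with $\delta_i\in\Phi^-$, fact~(ii) gives $\delta_{i-1}=\alpha_{s_i}$, i.e.\ $\beta_t=(s_1\cdots s_{i-1})(\alpha_{s_i})$, so $t=s_{\beta_t}=u\,s_i\,u^{-1}$ with $u:=s_1\cdots s_{i-1}$, and then $tw=u\,s_i\,u^{-1}w=(s_1\cdots s_{i-1})\,s_i\,(s_is_{i+1}\cdots s_k)=s_1\cdots\hat{s_i}\cdots s_k$, as desired. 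I expect the main obstacle to be purely expository rather than conceptual: facts~(i) and~(ii) about the root system, while entirely standard, are not one-line consequences of what has been developed so far and require a careful induction on length (itself resting on~\Cref{parity}); by contrast the combinatorial core of the strong exchange property — the ``first sign change'' argument — is very short once that machinery is available, and it has the pleasant feature of applying to an arbitrary, not necessarily reduced, expression for $w$.
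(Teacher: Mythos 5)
The paper does not prove this statement at all: it is quoted verbatim from B\"orner--Brenti as a known fact (Theorem~1.4.3 in~\cite{BB05}), with only the remark that the proof relies on~\Cref{parity}, so there is no in-paper argument to compare yours against. Your proof is correct and is the standard one: the key lemma ($\ell(tw)<\ell(w)$ iff $w^{-1}(\beta_t)\in\Phi^-$), the first-sign-change argument, and the crucial observation that the final run of that argument works for an \emph{arbitrary} expression $w=s_1\cdots s_k$ (since $w^{-1}=s_k\cdots s_1$ needs no reducedness) are all accurately executed, and the computation $tw=us_iu^{-1}w=s_1\cdots\hat{s_i}\cdots s_k$ is right. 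You also correctly flag that the real work is hidden in the two standard root-system facts (every root is positive or negative; $\sigma_s$ permutes $\Phi^+\setminus\{\alpha_s\}$), which rest on an induction on length and ultimately on~\Cref{parity}. For the record, this geometric route is the one in Humphreys~\cite{H92} (and Chapter~4 of~\cite{BB05}); the Chapter~1 proof in~\cite{BB05} that the paper cites instead runs a root-free version of the same sign-change idea via the permutation action of $W$ on $T\times\{\pm1\}$, which avoids setting up the geometric representation but is otherwise the same mechanism. Either route is a legitimate way to supply the omitted proof; yours is complete modulo the clearly identified standard inputs.
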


Another fundamental property of Coxeter systems is the \emph{deletion property}. This says that one can reduce any unreduced expression by deleting exactly two simple reflections while preserving the order of multiplications.

\begin{proposition} [Proposition~1.4.7 in~\cite{BB05}]\label{thm:deletion}
    If $w=s_1 s_2 \cdots s_k$ and $\ell(w)<k$, then, for some $1 \leq i < j \leq k$, $w=s_1 \cdots \hat{s_i}\cdots \hat{s_j} \cdots s_k$.
\end{proposition}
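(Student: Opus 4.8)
The plan is to read the word $s_1 s_2 \cdots s_k$ from left to right, locate the first position at which appending the next simple reflection causes the length to drop, and then apply the strong exchange property (\Cref{thm:strongexchange}) at that position.

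For the first step, set $w_\ell := s_1 s_2 \cdots s_\ell$ for $0 \le \ell \le k$, so that $w_0 = e$ and $w_k = w$. Since multiplying by a single simple reflection changes the length by at most one, $|\ell(w_\ell) - \ell(w_{\ell-1})| \le 1$; and since $w_\ell = w_{\ell-1}s_\ell$ with $s_\ell \in S \subseteq T$, \Cref{parity} forces $\ell(w_\ell) \not\equiv \ell(w_{\ell-1}) \pmod 2$. Hence $\ell(w_\ell) = \ell(w_{\ell-1}) \pm 1$ for each $\ell$. The first step is necessarily an increase, $\ell(w_1) = 1 = \ell(w_0) + 1$, so if every step were an increase we would get $\ell(w) = \ell(w_k) = k$, contradicting $\ell(w) < k$. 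Thus there is an index $j$ with $2 \le j \le k$ and $\ell(w_{j-1}s_j) = \ell(w_{j-1}) - 1 < \ell(w_{j-1})$.

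For the second step, I would rewrite this so that \Cref{thm:strongexchange}, which concerns left multiplication by a reflection, applies. Taking inverses (which preserve length) gives $\ell(s_j w_{j-1}^{-1}) < \ell(w_{j-1}^{-1})$, where $w_{j-1}^{-1} = s_{j-1}s_{j-2}\cdots s_1$. Applying the strong exchange property to this expression and the reflection $s_j \in T$ produces an index $i$ with $1 \le i \le j-1$ such that $s_j w_{j-1}^{-1} = s_{j-1}\cdots \hat{s_i}\cdots s_1$. Inverting back yields $w_{j-1}s_j = s_1 \cdots \hat{s_i} \cdots s_{j-1}$, and hence
\[
    w = (w_{j-1}s_j)\,s_{j+1}\cdots s_k = s_1 \cdots \hat{s_i} \cdots s_{j-1}\, s_{j+1}\cdots s_k = s_1 \cdots \hat{s_i} \cdots \hat{s_j} \cdots s_k,
\]
which is precisely the claimed expression with the two simple reflections $s_i$ and $s_j$, $i < j$, deleted and the order of the remaining factors preserved.

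I do not expect a genuine obstacle here: the argument is short given \Cref{parity} and \Cref{thm:strongexchange}. The only point needing a little care is the passage between right and left multiplication, which is handled cleanly by taking inverses; one could instead invoke a right-handed form of the strong exchange property directly, but reducing to the stated left-handed version avoids proving anything extra.
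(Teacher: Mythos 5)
The paper does not prove this proposition; it is quoted as Proposition~1.4.7 from Bj\"orner--Brenti~\cite{BB05} and used as a black box. Your argument is correct and is in fact the standard derivation of the deletion property from the strong exchange property (the one given in~\cite{BB05}): track the lengths of the prefixes $w_\ell$, use \Cref{parity} to see each step changes length by exactly~$1$, find a step where the length drops, and apply \Cref{thm:strongexchange} after inverting to convert right-multiplication into left-multiplication. One small remark: you announce that you will take the \emph{first} index $j$ at which the length drops, but your execution only uses \emph{some} such $j$ — which is enough here, precisely because \Cref{thm:strongexchange} does not require the word $s_{j-1}\cdots s_1$ to be reduced; taking the first $j$ would only be needed if one wanted to get by with the weaker exchange condition.
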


These two properties characterise Coxeter systems, in the sense that a group $W$ generated by a set of involutions $S$ is a Coxeter system if and only if it has the deletion property (or the strong exchange property) with respect to $S$ (see Theorem~1.5.1 in~\cite{BB05} for the proof). 

We will need the following corollary of~\Cref{thm:deletion}.

\begin{corollary}[Corollary~1.4.8 in~\cite{BB05}]\label{cor:generating}
    Any reduced expression for $w$ contains the same set of simple reflections.
\end{corollary}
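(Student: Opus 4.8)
The plan is to reduce the statement to the strong exchange property (\Cref{thm:strongexchange}) together with a short induction on $\ell(w)$, once I have isolated the key algebraic fact that a parabolic subgroup meets $S$ in exactly its generating set. For a reduced word $\underline{w}=s_1 s_2\cdots s_k$ representing $w$, write $\mathrm{supp}(\underline w)=\{s_1,\dots,s_k\}$; the claim is that $\mathrm{supp}(\underline w)$ does not depend on the chosen reduced word.

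First I would prove the auxiliary fact that $W_I\cap S=I$ for every $I\subseteq S$. The inclusion $I\subseteq W_I\cap S$ is immediate, so suppose $s\in S\cap W_I$ and write $s=t_1 t_2\cdots t_m$ with each $t_a\in I$. Since $\ell(s)=1$, if $m\ge 2$ then this expression is not reduced, and \Cref{thm:deletion} allows us to delete two of the $t_a$ while keeping every remaining letter in $I$; by the parity homomorphism of \Cref{parity}, $m$ is odd, so iterating reduces the word to a single letter $t_c\in I$ with $t_c=s$. Hence $s\in I$. Note this step uses only deletion and parity.

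Next I would prove the corollary by induction on $k=\ell(w)$, the cases $k\le 1$ being trivial. Given two reduced words $\underline w=s_1\cdots s_k$ and $\underline w'=s'_1\cdots s'_k$ for $w$, by symmetry it is enough to show $\mathrm{supp}(\underline w)\subseteq\mathrm{supp}(\underline w')$. Since $s_1\in S\subseteq T$ is the first letter of $\underline w$ we have $\ell(s_1 w)=k-1<\ell(w)$, so \Cref{thm:strongexchange} applied to $s_1$ and the reduced word $\underline w'$ produces an index $j$ with $s_1 w=s'_1\cdots \widehat{s'_j}\cdots s'_k$ and $s_1=u s'_j u^{-1}$, where $u=s'_1\cdots s'_{j-1}$. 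The second relation shows $s_1\in W_{\{s'_1,\dots,s'_j\}}\subseteq W_{\mathrm{supp}(\underline w')}$, so $s_1\in\mathrm{supp}(\underline w')$ by the auxiliary fact. Moreover $s'_1\cdots \widehat{s'_j}\cdots s'_k$ has length $k-1=\ell(s_1 w)$ and is therefore a reduced word for $s_1 w$, as is the suffix $s_2\cdots s_k$ of $\underline w$ (suffixes of reduced words are reduced); applying the induction hypothesis to $s_1 w$ gives $\{s_2,\dots,s_k\}=\{s'_1,\dots,\widehat{s'_j},\dots,s'_k\}\subseteq\mathrm{supp}(\underline w')$. Combining the two memberships yields $\mathrm{supp}(\underline w)=\{s_1\}\cup\{s_2,\dots,s_k\}\subseteq\mathrm{supp}(\underline w')$, and the symmetric argument finishes the induction.

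The main obstacle is precisely the gap that forces the auxiliary fact: a bare application of strong exchange only shows that $s_1$ is \emph{conjugate}, via a prefix of $\underline w'$, to one of the letters of $\underline w'$, and distinct simple reflections can certainly be conjugate, so this is not enough on its own. The role of the identity $W_I\cap S=I$ is to upgrade ``$s_1$ lies in the parabolic generated by the letters of $\underline w'$'' to ``$s_1$ is one of those letters''; once that is available, the rest of the induction is routine bookkeeping.
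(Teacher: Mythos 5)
The paper does not supply a proof here: it records the statement as Corollary~1.4.8 of Bj\"orner--Brenti and moves on, so there is no in-paper argument to compare against. That said, your proof is correct and self-contained, and it uses only tools the paper has already stated --- the deletion property, the parity lemma, and the strong exchange theorem --- so it sits comfortably in the paper's development. You correctly diagnose the genuine gap in a naive use of strong exchange: that theorem only yields the conjugacy $s_1 = u s'_j u^{-1}$, which shows $s_1 \in W_{\{s'_1,\dots,s'_j\}}$ but not yet $s_1 \in \{s'_1,\dots,s'_j\}$. The auxiliary fact $W_I \cap S = I$ is exactly the right bridge, and your derivation of it from deletion plus parity is sound. (A minor simplification: you do not strictly need to invoke the parity lemma, since repeatedly applying deletion to an expression for $s$ with all letters in $I$ must terminate at a reduced expression, which has length $\ell(s) = 1$; the fact that deletion drops two letters at a time then forces the original length to have been odd. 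Citing parity is a perfectly good shortcut, though.) The rest of the induction --- observing that $s'_1\cdots\widehat{s'_j}\cdots s'_k$ is a reduced word for $s_1 w$ of length $k-1$, applying the inductive hypothesis to it alongside the suffix $s_2\cdots s_k$, and concluding by symmetry --- is clean and correct.
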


In the proof of~\Cref{thm:main}, we will rely heavily on a partial order defined on $W$ and its quotients, often referred to as the \emph{Bruhat order}. We have that $u < w$ in this order if there exists a sequence $u_0,\dots,u_{k} \in W$ such that $u_0=u$, $u_{k}=w$, $u_{i+1}=u_i t_i$,\footnote{This definition is equivalent to the left-multiplication version, since such a sequence satisfies $u_{i+1}=t_i' u_i$ for $t_i'=u_it_i{u_i}^{-1}$. Therefore, $u<w$ is equivalent to $u^{-1} < w^{-1}$.} and $\ell(u_i)<\ell(u_{i+1})$ for each $i=0,1,\dots,k-1$ with $t_i\in T$. The notation $u\leq w$ then means that either $u=w$ or $u<w$. When $u$ and $w$ are both written in reduced form, $u\leq w$ simply means that deleting some simple reflections from $w$ gives $u$, i.e., $u$ can be seen as a subword of $w$. This is called the \emph{subword property}.

\begin{theorem}[Theorem~2.2.2 and Corollary 2.2.3 in~\cite{BB05}]
    Let $w = s_1 s_2 \cdots s_q$ be a reduced expression. Then $u\leq w$ if and only if $u=s_{i_1} s_{i_2} \cdots s_{i_k}, 1 \leq i_1 < \cdots < i_k \leq q$. That is, $u\leq w$ if and only if every reduced expression for $w$ has a subword which is a reduced expression for $u$.
\end{theorem}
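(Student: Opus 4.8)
The plan is to prove both implications of the equivalence, using as tools the strong exchange property (\Cref{thm:strongexchange}), the deletion property (\Cref{thm:deletion}), the parity homomorphism (\Cref{parity}), and the definition of the Bruhat order. Two preliminary remarks will be used repeatedly. First, if $u$ occurs as a (possibly unreduced) subword $s_{i_1}\cdots s_{i_k}$ of a word for $w$, then by repeatedly deleting pairs of letters via \Cref{thm:deletion} we may pass to a \emph{reduced} subword of it still equal to $u$; hence it is harmless to assume that any subword expression we produce for $u$ is reduced. Second, we will use the right-handed form of the strong exchange property: if $w=s_1\cdots s_q$ is reduced, $t\in T$, and $\ell(wt)<\ell(w)$, then $wt=s_1\cdots\widehat{s_i}\cdots s_q$ for some $i$. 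This follows from \Cref{thm:strongexchange} applied to $w^{-1}=s_q\cdots s_1$, since inversion reverses reduced words, preserves lengths, and maps $T$ into itself.

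For the forward direction I would show, by induction on $\ell(w)$, that $u\le w$ implies $u$ is a subword of \emph{every} reduced word $s_1\cdots s_q$ for $w$. If $u=w$ there is nothing to prove, so let $u<w$ and fix a chain $u=u_0<u_1<\cdots<u_m=w$ with $u_{j+1}=u_jt_j$, $t_j\in T$, and $\ell(u_j)<\ell(u_{j+1})$. Its last step gives $u_{m-1}=wt_{m-1}$ with $\ell(u_{m-1})<\ell(w)$, so the right-handed strong exchange property yields $u_{m-1}=s_1\cdots\widehat{s_i}\cdots s_q$ for some $i$; cleaning this up with \Cref{thm:deletion} if necessary produces a reduced word for $u_{m-1}$ that is a subword of $s_1\cdots s_q$. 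Since $u\le u_{m-1}$ and $\ell(u_{m-1})<\ell(w)$, the inductive hypothesis says $u$ is a subword of that reduced word for $u_{m-1}$, hence of $s_1\cdots s_q$. This proves the "$\Rightarrow$" implication in the strong (``every reduced word'') form.

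For the backward direction I would again induct on $\ell(w)$, peeling the first letter off a fixed reduced word $w=s_1\cdots s_q$ and putting $w'=s_2\cdots s_q$, a reduced word of length $q-1$ with $w=s_1w'$ and $w'=s_1w$. Let $u=s_{i_1}\cdots s_{i_k}$ be a reduced subword of $w$. If $i_1\ge 2$, then $u$ is a subword of $w'$, so $u\le w'$ by induction, and since $\ell(w')<\ell(w)$ the definition of the Bruhat order gives $w'<w$, whence $u\le w$. If $i_1=1$, write $u=s_1u'$ with $u'=s_{i_2}\cdots s_{i_k}$ a reduced subword of $w'$, noting that $s_1$ increases the length of both $u'$ and $w'$ (because $s_1u'=u$ and $s_1w'=w$ are reduced of lengths $k$ and $q$). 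The inductive hypothesis gives $u'\le w'$, and it remains to deduce $s_1u'\le s_1w'=w$. This is an instance of the \emph{lifting property}: for $a\le b$ and $s\in S$ with $\ell(sa)>\ell(a)$ and $\ell(sb)>\ell(b)$, one has $sa\le sb$. I would prove this as a lemma by tracking, along a chain $a=c_0<c_1<\cdots<c_m=b$, the unordered pair $\{\min(c_j,sc_j),\max(c_j,sc_j)\}$, where $\min$ and $\max$ refer to the shorter and longer of $c_j$ and $sc_j$: a short case analysis according to whether $s$ lies in the left descent set $\{r\in S:\ell(rc)<\ell(c)\}$ of $c_j$ and of $c_{j+1}$ shows both coordinates are weakly increasing (in Bruhat order) at each step, so at the ends the length hypotheses give $a\le b$ and $sa\le sb$. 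All but one case reduce to \Cref{parity} and elementary length comparisons; the exception — $s\notin D_L(c_j)$, $s\in D_L(c_{j+1})$ with $\ell(c_j)=\ell(c_{j+1})-1$ — is settled by applying the right-handed strong exchange property to a reduced word for $c_{j+1}$ beginning with $s$, which forces $sc_j=c_{j+1}$.

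The main obstacle, and the only genuinely delicate point, is this lifting property as invoked in the $i_1=1$ case: left multiplication by a simple reflection does not respect the Bruhat order in general, so $u'\le w'$ cannot simply be pushed forward to $s_1u'\le s_1w'$ without the length side conditions, and checking that the conjunction of the ``$\min$'' and ``$\max$'' inequalities survives each chain step — especially in the length-$1$ case, where the \emph{strong} (rather than ordinary) exchange property is essential — is the crux; the deletion property only does bookkeeping and parity only supplies that $x$ and $xt$ have different lengths for $t\in T$. With the lifting property in hand both directions close, and combining them yields the stated equivalence, including the strengthening that ``some reduced word'' may be replaced by ``every reduced word''.
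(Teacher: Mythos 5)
The paper itself offers no proof of this statement---it is imported verbatim from \cite{BB05} (Theorem 2.2.2 and Corollary 2.2.3)---so there is no internal argument to compare against; judged on its own, your proof is correct and is essentially the standard textbook argument. The forward direction (via the last link of a Bruhat chain, the right-handed form of \Cref{thm:strongexchange} obtained by inversion, and a clean-up with \Cref{thm:deletion}) is exactly right, and the backward direction correctly reduces, after peeling the first letter, to the monotonicity statement ``$a\le b$, $\ell(sa)>\ell(a)$, $\ell(sb)>\ell(b)$ imply $sa\le sb$''. Two points are worth noting. First, you are right not to quote \Cref{lifting} for that step: in \cite{BB05} the lifting property is itself deduced from the subword property, so invoking it here would be circular relative to that development, and your direct chain-tracking proof avoids this. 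Second, your case analysis does survive the fact that a chain step $c_j<c_{j+1}=c_jt_j$ may increase length by more than one: since $sc_{j+1}=(sc_j)t_j$, every case other than the one you single out follows from one-step length comparisons (with \Cref{parity} guaranteeing the jump is odd, so the remaining case is exactly a length-one jump with $s\notin D_L(c_j)$ and $s\in D_L(c_{j+1})$), and there your use of strong exchange is sound: applying the right-handed exchange property to a reduced word for $c_{j+1}$ beginning with $s$, the deleted letter must be that initial $s$, since deleting a later letter would exhibit $sc_j$ as a word of length $\ell(c_{j+1})-2<\ell(sc_j)$, a contradiction; hence $sc_j=c_{j+1}$ and both tracked inequalities hold trivially. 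With that lemma in place, both implications close as you describe, including the strengthening from ``some'' to ``every'' reduced expression.
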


Another useful fact is the \emph{chain property} of the Bruhat poset.

\begin{theorem}[Theorem~2.2.6 in~\cite{BB05}]
    If $u<w$, there exists a chain $u=x_0<x_1 < \cdots < x_k = w$ such that $\ell(x_i) = \ell(u)+i$ for $1 \leq i \leq k$.
\end{theorem}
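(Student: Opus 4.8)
\textbf{Proof plan for the chain property (Theorem~2.2.6 in~\cite{BB05}).} The plan is to prove that if $u<w$ in the Bruhat order, then there is a saturated chain $u = x_0 < x_1 < \cdots < x_k = w$ with $\ell(x_i) = \ell(u) + i$. By definition of the Bruhat order, there is \emph{some} chain from $u$ to $w$, so $\ell(u) < \ell(w)$; set $k := \ell(w) - \ell(u)$. The natural approach is induction on $k$. The base case $k=1$ is immediate: a chain of length strictly increasing by a single reflection step that changes the length by exactly one is already saturated. For the inductive step, the crux is the following \emph{lifting} or \emph{one-step refinement} statement: if $u < w$ with $\ell(w) > \ell(u) + 1$, then there exists $x$ with $u < x < w$ and $\ell(x) = \ell(u) + 1$. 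Once this is available, one applies the induction hypothesis to the pair $x < w$ (whose length gap is $k-1$) to get a saturated chain from $x$ to $w$, then prepends $u < x$.

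\textbf{Proving the one-step refinement.} To produce such an $x$, I would work with a reduced expression and use the subword characterisation of the Bruhat order (the subword property quoted just above from Theorem~2.2.2 in~\cite{BB05}). Fix a reduced word $w = s_1 s_2 \cdots s_q$, so $q = \ell(w)$. Since $u \le w$, there is a subword $s_{i_1} s_{i_2} \cdots s_{i_m}$ of $w$ that is a reduced expression for $u$, with $m = \ell(u) < q$; hence there is at least one index in $\{1,\dots,q\}$, say position $p$, not among $\{i_1,\dots,i_m\}$. Insert $s_p$ into the subword in its correct position to obtain a word $v$ of length $m+1$ which is a subword of $w$; by the subword property $v \le w$, and since $\ell(v) \le m+1$ and $v \ne u$ (it properly contains a reduced word for $u$ as a subword, and... one must be slightly careful here) we have $u \le v \le w$. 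The subtlety is that $v$ need not itself be reduced: it may be that $\ell(v) = m - 1$ or $m$ rather than $m+1$. I would handle the cases by the parity/length considerations: if $v$ is reduced, $\ell(v) = m+1$ and we are done unless $v = w$, which forces $q = m+1$, contradicting $\ell(w) > \ell(u)+1$; so $u < v < w$ with $\ell(v) = \ell(u)+1$, giving $x := v$. If $v$ is not reduced, then $v$ is obtained from a reduced word for $u$ by one insertion, so by~\Cref{thm:deletion} (the deletion property) one can delete two simple reflections from $v$ to reduce it, and since deleting the newly inserted $s_p$ alone already yields the reduced word for $u$, the only way $v$ is unreduced is $\ell(v) \in \{m-1, m+1\}$; the parity homomorphism of~\Cref{parity} rules out $\ell(v) = m$, and a direct argument shows $v$ unreduced forces a shorter expression for $u$, contradicting $\ell(u) = m$. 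Pinning down this case analysis cleanly is where the real work lies.

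\textbf{An alternative, more robust route.} Rather than wrestle with whether the inserted subword is reduced, I would prefer to argue as follows, which I expect to be the cleanest path. Take a chain $u = u_0 < u_1 < \cdots < u_\ell = w$ witnessing $u < w$, with $u_{j+1} = t_j' u_j$ for reflections $t_j' \in T$ and $\ell(u_j) < \ell(u_{j+1})$. If some consecutive step already has $\ell(u_{j+1}) = \ell(u_j)+1$ throughout, we are done after relabelling; otherwise, pick the first index $j$ with $\ell(u_{j+1}) \ge \ell(u_j) + 2$ and use the following standard fact, which I would state and prove as an auxiliary lemma: \emph{if $\ell(tv) > \ell(v) + 1$ for $v\in W$, $t\in T$, then there is a reflection $t'' \in T$ with $\ell(v) < \ell(t'' v) < \ell(tv)$ and $t''v < tv$ in Bruhat order.} This is essentially the content of the lifting property and follows by applying the strong exchange property (\Cref{thm:strongexchange}) to a reduced word for $tv$: writing $tv = s_1 \cdots s_r$ reduced with $r = \ell(tv)$, since $\ell(t \cdot tv) = \ell(v) < \ell(tv)$, strong exchange gives $v = s_1 \cdots \widehat{s_i} \cdots s_r$; then $t'' := s_1 \cdots s_{i-1} \cdot s_{i+1} \cdot s_{i-1}^{-1} \cdots s_1^{-1}$ (conjugating the adjacent simple reflection) produces the desired intermediate reflection, with the length estimate following from the subword property. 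Inserting $t''v$ into the chain strictly increases its length, so iterating terminates with a saturated chain. I expect the main obstacle to be bookkeeping: ensuring the inserted element genuinely lies strictly between its neighbours in Bruhat order and that the refinement process strictly increases chain length at each step so that it terminates; both reduce, ultimately, to careful use of the subword property together with~\Cref{parity} and~\Cref{thm:strongexchange}, all of which are available from the excerpt.
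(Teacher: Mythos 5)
You correctly identify the heart of the matter (the paper itself only cites this as Theorem~2.2.6 of~\cite{BB05} and gives no proof): one must refine $u<w$ by producing an element strictly between them of the right length, and induction does the rest. But in both of your routes the step that actually produces such an element is broken. In the first route, after inserting an unused letter $s_p$ into a reduced subword for $u$, you claim $u\le v$ and that ``$v$ unreduced forces a shorter expression for $u$, contradicting $\ell(u)=m$''. Both claims are false. Take $W=S_3$, $w=s_1s_2s_1$, $u=s_1$ realised as the subword at position $1$, and insert position $3$: the word is $s_1s_1$, representing $v=e$. Here $v$ is unreduced, no contradiction with $\ell(u)=1$ arises, and $u\le v$ fails outright (a reduced word for $u$ sitting inside an \emph{unreduced} word for $v$ gives no Bruhat comparison). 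The insertion simply fails, and the whole difficulty of the theorem is choosing the subword/insertion so that it cannot fail; in Bj\"orner--Brenti this is done by taking an extremal (e.g.\ lexicographically last) reduced subword for $u$ and then using the exchange property to show the inserted word is reduced. That selection principle is exactly what is missing from your sketch.

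The ``more robust'' second route has the same gap, now hidden in the auxiliary lemma. Your construction takes the strong-exchange position $i$ in a reduced word for $tv$ and sets $t''$ to be the prefix-conjugate of $s_{i+1}$, asserting that $\ell(v)<\ell(t''v)<\ell(tv)$ follows from the subword property. The subword property only gives the upper bound $t''v\le tv$; nothing forces $v<t''v$, and indeed it can fail. Concretely, in $W=S_4$ take $v=s_3$ and $t=s_1s_2s_1$, so $tv=s_1s_2s_3s_1$ is reduced of length $4=\ell(v)+3$. The only position satisfying \Cref{thm:strongexchange} for this $t$ and this reduced word is $i=2$ (prefix $s_1$, $s_1s_2s_1=t$, and deleting $s_2$ gives $s_1s_3s_1=s_3=v$). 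Your recipe then yields $t''=s_1s_3s_1=s_3$ and $t''v=e$, which lies \emph{below} $v$, so no intermediate element is produced (one exists, e.g.\ $s_1s_3$, but the construction does not find it). So the auxiliary lemma is not established by your argument; it is essentially equivalent in difficulty to the chain property itself and needs a genuine proof (e.g.\ the Bj\"orner--Brenti induction using $s\in D_L(w)$ and \Cref{lifting}-type case analysis, or the extremal-subword argument above), not the particular deletion you propose. The parity observation (\Cref{parity}) and \Cref{thm:deletion} are fine as far as they go, but they do not close either gap.
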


We end this subsection by stating the \emph{lifting property}, which will be needed for the proof of \Cref{thm:cut_involution}. 
To state the result, we need the notation $D_L(w) :=\{ s \in S : \ell(sw)<\ell(w)\}$. That is, $D_L(w)$ consists of those simple reflections that decrease the length of $w$ when we multiply on the left. Similarly, we write $D_R(w):=\{ s \in S : \ell(ws)<\ell(w)\}$.

\begin{proposition}[Proposition~2.2.7 in~\cite{BB05}]\label{lifting}
Suppose $u <w$ and $s \in D_L(w) \setminus D_L(u)$. Then $u \leq sw$ and $su \leq w$. By replacing $u$ and $w$ by $u^{-1}$ and $w^{-1}$, respectively, $u<w$ and $s \in D_R(w) \setminus D_R(u)$ implies $u \leq ws$ and $us \leq w$.
\end{proposition}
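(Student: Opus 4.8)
The plan is to prove the left-multiplication assertion first and then deduce the right-multiplication one from it by passing to inverses. For the reduction, I would substitute $u\mapsto u^{-1}$ and $w\mapsto w^{-1}$: length is invariant under inversion, the Bruhat order satisfies $x\le y\iff x^{-1}\le y^{-1}$ (as noted just after its definition above), and a one-line computation gives $D_L(x^{-1})=D_R(x)$, since $\ell(sx^{-1})=\ell(xs)$. Thus the hypothesis $s\in D_R(w)\setminus D_R(u)$ translates into $s\in D_L(w^{-1})\setminus D_L(u^{-1})$, and applying the left version to $u^{-1},w^{-1}$ yields $u^{-1}\le sw^{-1}$ and $su^{-1}\le w^{-1}$; inverting these two inequalities gives $u\le ws$ and $us\le w$.

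For the left version, the key preliminary observation I would isolate is that, since $s\notin D_L(u)$, no reduced expression for $u$ begins with $s$: if $u=s\,t_1\cdots t_m$ were reduced, then $su=t_1\cdots t_m$ would be a word of length $m=\ell(u)-1$, which would force $\ell(su)<\ell(u)$, i.e. $s\in D_L(u)$. Next, using the hypothesis $s\in D_L(w)$, I would fix a reduced expression $w=s_1 s_2\cdots s_q$ with $s_1=s$, obtained by prepending $s$ to a reduced expression for $sw$. By the subword property, $u<w$ supplies indices $1\le i_1<i_2<\cdots<i_k\le q$ for which $s_{i_1}s_{i_2}\cdots s_{i_k}$ is a reduced expression for $u$; by the preliminary observation this expression cannot begin with $s=s_1$, so $i_1\ge 2$. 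Then $s_{i_1}\cdots s_{i_k}$ is a subword of the reduced expression $s_2 s_3\cdots s_q$ for $sw$, whence $u\le sw$ by the subword property. Finally, $s_1 s_{i_1}\cdots s_{i_k}$ is a word of length $k+1$ representing $su$, and $\ell(su)=\ell(u)+1=k+1$ because $s\notin D_L(u)$, so this word is reduced; it sits inside $w=s_1 s_2\cdots s_q$ as a subword (at positions $1<i_1<\cdots<i_k$), so the subword property gives $su\le w$.

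I do not expect a substantial obstacle here; the steps that need care are essentially bookkeeping — verifying that $s_2\cdots s_q$ is genuinely a \emph{reduced} expression for $sw$, which is where $\ell(sw)=\ell(w)-1$ is used; confirming that prepending $s_1$ to a reduced expression for $u$ keeps it reduced, which is exactly the hypothesis $s\notin D_L(u)$; and the routine translation of descent sets under inversion. Should one wish to avoid invoking the subword property, an alternative is to induct on $\ell(w)$ via the strong exchange property, but the route through the subword property is the cleanest.
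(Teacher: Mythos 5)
Your proof is correct. The paper offers no proof of this statement at all---it is quoted directly as Proposition~2.2.7 of~\cite{BB05}---so there is nothing in-paper to compare against; your argument (prepend $s$ to a reduced word for $sw$ to get a reduced word for $w$ starting with $s$, extract via the subword property a reduced subword for $u$, note it cannot use position $1$ since $s\notin D_L(u)$, and then read off both $u\le sw$ and $su\le w$, with the right-handed version obtained by inversion using $D_L(x^{-1})=D_R(x)$ and the inversion-invariance of the Bruhat order) is the standard textbook proof and is carried out without gaps.
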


\subsection{Quotients of parabolic subgroups and the proof of~\Cref{thm:cut_involution}}

Let $W^I := \{w \in W : ws > w \text{ for all } s \in I\}$. This is called the \emph{quotient} of the parabolic subgroup $W_I$ generated by $I\subseteq S$. Indeed, as implied by the next result, the elements of $W^I$ are exactly the representatives of each left coset of $W_I$ that have minimum length.

\begin{proposition}[Proposition~2.4.4 in~\cite{BB05}]\label{thm:factorize}
    Let $I \subseteq S$. Then every $w \in W$ has a unique factorization $w = w^I \cdot w_I$ such that $w^I \in W^I$ and $w_I \in W_I$. Furthermore, $\ell(w)=\ell(w^I)+\ell(w_I)$.
\end{proposition}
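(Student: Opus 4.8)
The plan is to establish the three assertions bundled in the statement — existence of a factorization $w = w^I w_I$, the identity $\ell(w) = \ell(w^I) + \ell(w_I)$, and uniqueness — in that order. Existence together with the length identity I would prove by induction on $\ell(w)$. If $w \in W^I$, take $w^I = w$ and $w_I = e$. Otherwise there is some $s \in I$ with $\ell(ws) < \ell(w)$; applying the inductive hypothesis to $ws$ gives $ws = (ws)^I (ws)_I$ with additive lengths, and I set $w^I := (ws)^I \in W^I$ and $w_I := (ws)_I s \in W_I$, so that $w = (ws) s = w^I w_I$. Since $\ell(ws)$ and $\ell(w)$ have opposite parity by \Cref{parity} and differ by at most one, $\ell(w) = \ell(ws) + 1$; combining this with $\ell(w_I) \le \ell((ws)_I) + 1$ and with the subadditivity bound $\ell(w) = \ell(w^I w_I) \le \ell(w^I) + \ell(w_I)$ forces $\ell(w) = \ell(w^I) + \ell(w_I)$.

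The uniqueness clause — and, with it, the length identity for the unique factorization — follows at once from the stronger statement that I will call Lemma~A: \emph{if $u \in W^I$ then $\ell(uv) = \ell(u) + \ell(v)$ for every $v \in W_I$; equivalently (given the existence part just proved), each coset $wW_I$ contains exactly one element of $W^I$, namely its unique shortest element.} Indeed, granting Lemma~A, two factorizations $w = uv = u'v'$ with $u, u' \in W^I$ and $v, v' \in W_I$ have $uW_I = wW_I = u'W_I$, whence $u = u'$ and then $v = v'$. So the whole proposition reduces to Lemma~A.

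I would prove Lemma~A by induction on $\ell(v)$. The case $\ell(v) \le 1$ is immediate from the definition of $W^I$ together with \Cref{parity}; here one also uses that a reduced word for any element of $W_I$ involves only letters of $I$ (a consequence of \Cref{thm:deletion} and \Cref{cor:generating}), so that $D_R(v) \subseteq I$. For the inductive step, write $v = v's$ with $s \in D_R(v) \subseteq I$, $v' \in W_I$, and $\ell(v') = \ell(v) - 1$. By the inductive hypothesis $\ell(uv') = \ell(u) + \ell(v')$, so the concatenation of a reduced word for $u$ with one for $v'$ is a reduced word for $uv'$, and it suffices to show $s \notin D_R(uv')$. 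If instead $s \in D_R(uv')$, then the strong exchange property in its right-multiplication form (deduced from \Cref{thm:strongexchange} by inversion) says that $uv's$ is obtained from this reduced word by deleting a single letter; deleting a letter from the $v'$-block would give $\ell(v) = \ell(v's) \le \ell(v') - 1 = \ell(v) - 2$, which is absurd, so the deleted letter lies in the block coming from $u$, and rewriting the resulting identity yields $\ell(u\sigma) < \ell(u)$ for the reflection $\sigma := v's(v')^{-1} \in T \cap W_I$.

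Thus everything comes down to excluding $\ell(u\sigma) < \ell(u)$ when $u \in W^I$ and $\sigma \in W_I$ is a reflection — and this is the step I expect to be the real obstacle, since it is precisely the (non-formal) assertion that the descent-theoretic description $W^I = \{w : D_R(w) \cap I = \varnothing\}$ coincides with the description of $W^I$ as the set of minimal-length coset representatives. I would derive it from the companion claim that a coset of $W_I$ contains at most one element of $W^I$: if $\ell(u\sigma) < \ell(u)$, then the representative $(u\sigma)^I$ furnished by the existence part lies in $u\sigma W_I = uW_I$, belongs to $W^I$, and has length $\le \ell(u\sigma) < \ell(u)$, so it is a second, distinct element of $W^I$ in $uW_I$. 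For the companion claim I would run a dedicated induction, peeling the ratio $u^{-1}u' \in W_I$ of two hypothetical $W^I$-representatives of a coset down one simple reflection at a time, using the lifting property (\Cref{lifting}) to move right descents between $u$ and $u'$; alternatively, and perhaps more cleanly, one can transport the computation into the Coxeter system $(W_I, I)$ — legitimate since $W_I$ with generating set $I$ is itself a Coxeter system — and invoke the exchange property there. Closing this induction completes the proof of Lemma~A, and hence of the proposition.
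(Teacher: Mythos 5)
The paper does not prove this statement itself; it is cited directly from~\cite{BB05}, so there is no internal argument to compare against. Your existence-plus-additivity induction on $\ell(w)$ is correct and complete. The genuine gap is exactly where you flag it: the ``companion claim'' that each left coset of $W_I$ contains at most one element of $W^I$ is the whole content of the proposition, and you leave it as a sketch. Neither suggested route is convincing as stated. Peeling $u^{-1}u' \in W_I$ down a right descent $s \in D_R(u^{-1}u') \cap I$ \emph{increases} $\ell(u')$, since $u' \in W^I$ forces $s \notin D_R(u')$, so that induction does not close; and transporting the computation into the Coxeter system $(W_I, I)$ does not by itself compare ambient lengths of $u$ and $uv$. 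There is also a structural redundancy worth noting: once the companion claim is known, Lemma~A follows immediately from your existence part (factor $uv = (uv)^I(uv)_I$; the companion claim forces $(uv)^I = u$, whence $(uv)_I = v$ and $\ell(uv) = \ell(u)+\ell(v)$), so the detour through strong exchange and the reflection $\sigma = v's(v')^{-1}$ is unnecessary.

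A route that actually closes the gap, and is essentially the textbook one, argues via the deletion property (\Cref{thm:deletion}) directly on a minimal-length element. Let $\tilde{w}$ be of minimal length in $wW_I$; by parity, $\tilde{w} \in W^I$. To see $\ell(\tilde{w}v) = \ell(\tilde{w}) + \ell(v)$ for all $v \in W_I$, concatenate a reduced word for $\tilde{w}$ with a reduced $I$-word for $v$. If this concatenation were not reduced, \Cref{thm:deletion} lets one delete two letters: deleting two from the $\tilde{w}$-block or two from the $v$-block contradicts reducedness of those words, while deleting one from each block exhibits an element of $\tilde{w}W_I$ of length strictly less than $\ell(\tilde{w})$, contradicting minimality. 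This simultaneously identifies $W^I$ with the set of minimal-length coset representatives and gives the additivity, with no circularity; uniqueness of the factorization then follows as you describe.
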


A useful fact is that the projection $w\mapsto w^I$, denoted by $P^I$, preserves the Bruhat order.

\begin{proposition}[Proposition~2.5.1 in~\cite{BB05}]\label{prop:order_preserving}
    Let $u,w\in W$. If $w\leq u$, then $w^I\leq u^I$.
\end{proposition}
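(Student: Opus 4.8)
The plan is to establish monotonicity of the projection $P^I\colon w\mapsto w^I$ directly from the subword characterisation of the Bruhat order, combined with the parabolic factorisation of \Cref{thm:factorize}. The key observation is that, although $P^I$ has no transparent effect on individual letters of a word, it becomes manifestly monotone once we choose a reduced expression for $u$ that respects the splitting $u=u^I\cdot u_I$.

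Concretely, given $w\le u$, first apply \Cref{thm:factorize} to $u$. Fix reduced expressions $u^I=a_1\cdots a_p$ and $u_I=b_1\cdots b_q$; since $\ell(u)=\ell(u^I)+\ell(u_I)$, the concatenation
\[
u=a_1\cdots a_p\,b_1\cdots b_q
\]
is a reduced expression for $u$, with each $b_j\in I$, so that $b_1\cdots b_q\in W_I$. By the subword property, $w$ equals the group element obtained from this word by deleting some of its letters. Write the surviving letters as $w=w_1w_2$, where $w_1$ is the product of the surviving $a_i$'s and $w_2$ the product of the surviving $b_j$'s (the $a$-letters all precede the $b$-letters, so this splitting makes sense). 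Two things follow at once: $w_1$ equals a subword of the reduced expression $a_1\cdots a_p$ for $u^I$, hence $w_1\le u^I$ by the subword property; and $w_2\in W_I$, being a product of elements of $I$. (When $w=u$ this simply says $w_1=u^I$, $w_2=u_I$.)

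Now $wW_I=w_1w_2W_I=w_1W_I$, so $w$ and $w_1$ lie in the same coset of $W_I$; by the uniqueness of the factorisation in \Cref{thm:factorize}, this forces $w^I=(w_1)^I$. Applying \Cref{thm:factorize} once more, this time to $w_1$, we get $w_1=(w_1)^I\cdot(w_1)_I$ with the lengths adding, so a reduced expression for $w_1$ has $(w_1)^I$ as a prefix, whence $(w_1)^I\le w_1$ by the subword property. Since the Bruhat order is transitive, we conclude
\[
w^I=(w_1)^I\le w_1\le u^I,
\]
which is the desired inequality.

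I do not expect a genuine obstacle here; the only point needing care is that a subword of a reduced expression need not itself be reduced, so the two appeals to the subword property should be read in the form ``$v\le x$ whenever $v$ equals a subword of a fixed reduced expression of $x$'', without trying to control reducedness of the pieces $w_1$ and $w_2$. An alternative, slightly longer route would first use the chain property to reduce to a single covering step $w<u$ with $\ell(u)=\ell(w)+1$ and then invoke the lifting property (\Cref{lifting}), distinguishing according to whether the relevant simple reflection lies in $D_R(u)\setminus D_R(w)$; the argument above sidesteps that case analysis.
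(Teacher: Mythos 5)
The paper does not prove this statement; it imports it verbatim from Björner--Brenti \cite{BB05} (Proposition~2.5.1 there), so there is no in-paper proof to compare against. Your argument, however, is correct and self-contained given the other facts the paper quotes (\Cref{thm:factorize} and the subword characterisation of the Bruhat order). The key points all check out: concatenating reduced words for $u^I$ and $u_I$ yields a reduced word for $u$ because the lengths add; the subword representing $w$ splits cleanly into an $a$-part $w_1$ and a $b$-part $w_2$ because the $a$-letters precede the $b$-letters; $w_2\in W_I$ forces $wW_I=w_1W_I$ and hence $w^I=(w_1)^I$ by uniqueness of the minimal coset representative; and both $(w_1)^I\le w_1$ and $w_1\le u^I$ follow from the ``subword implies $\le$'' direction, which, as you correctly flag, does not require the subwords themselves to be reduced. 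Your aside about the alternative route via the chain and lifting properties is also accurate and is closer in spirit to how such monotonicity statements are often proved inductively; the subword argument you chose avoids that case analysis and is arguably the cleaner of the two.
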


The chain property also carries over to quotients.

\begin{theorem}[Theorem~2.5.5 in~\cite{BB05}]\label{thm:reduced_subwords}
If $u < w$ in $W^I$, there exists a chain $u = w_0 < w_1 < \cdots < w_k=w$ such that $w_i \in W^I$ and $\ell(w_i) = \ell(u)+i$ for $1 \leq i \leq k$.
\end{theorem}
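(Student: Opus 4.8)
The plan is to derive the chain property for $W^I$ from the chain property for $W$ (\cite[Theorem~2.2.6]{BB05}) together with the lifting property (\Cref{lifting}), by induction on $\ell(w)$, after first recording one structural fact about how left descents interact with the quotient.

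\textbf{The descent lemma.} First I would prove: if $w\in W^I$ and $s\in D_L(w)$, then $sw\in W^I$, and automatically $\ell(sw)=\ell(w)-1$. Since $W^I=\{x: xt>x\text{ for all }t\in I\}$ is exactly the set of minimal-length coset representatives by \Cref{thm:factorize}, it suffices to rule out a $t\in I$ with $\ell(swt)<\ell(sw)$, i.e.\ with $\ell(swt)\le \ell(w)-2$. But $wt$ lies in the coset $wW_I$ and $w$ is its minimal-length representative, so $\ell(wt)=\ell(w)+1$; as $swt=s\cdot(wt)$ and left multiplication by a simple reflection changes length by exactly one, $\ell(swt)\in\{\ell(w),\ell(w)+2\}$, a contradiction. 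A small consequence, used repeatedly below: whenever $w\in W^I$ and $s\in D_L(w)$, the element $sw$ is covered by $w$ in $W^I$, since the lengths differ by one and Bruhat order is length-monotone, so nothing of $W^I$ can lie strictly between them.

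\textbf{The induction.} For $u<w$ in $W^I$, I would induct on $\ell(w)$, the case $\ell(w)\le 1$ being trivial. It is enough to produce $v\in W^I$ covered by $w$ in $W^I$ with $u\le v$: then $u=v$ gives the chain $u\lessdot w$, while $u<v$ (forcing $\ell(v)<\ell(w)$) lets us apply the inductive hypothesis to $u<v$ and append $v\lessdot w$. Pick $s\in D_L(w)$, so $sw\in W^I$ is covered by $w$ by the descent lemma. If $u\le sw$, take $v:=sw$; by \Cref{lifting} this always succeeds when $s\notin D_L(u)$. Otherwise $u\not\le sw$ for every $s\in D_L(w)$, and \Cref{lifting} then forces $D_L(w)\subseteq D_L(u)$; fixing $s\in D_L(w)$, \Cref{lifting} applied to $su<u<w$ (with $s\in D_L(w)\setminus D_L(su)$) gives $su\le sw$, where $su,sw\in W^I$ by the descent lemma and $su<sw$ with $\ell(sw)<\ell(w)$, so the inductive hypothesis yields a saturated chain $su=z_0\lessdot z_1\lessdot\cdots\lessdot z_m=sw$ inside $W^I$.

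\textbf{Main obstacle.} The real work is this last case, where the obvious coatoms $sw$ ($s\in D_L(w)$) lie below $u$ rather than above it, so one must extract a subtler coatom of $w$ in $W^I$ above $u$. Looking at $z_{m-1}$: one first checks $s\notin D_L(z_{m-1})$ — otherwise \Cref{lifting} (applied to $su\le z_{m-1}$) would give $u\le z_{m-1}<sw$, contradicting $u\not\le sw$ — so $\ell(sz_{m-1})=\ell(w)-1$; further applications of \Cref{lifting} give $u\le sz_{m-1}$ and $sz_{m-1}<w$. The sticking point is showing that $sz_{m-1}$ can be taken to lie in $W^I$: unlike the descent lemma, here $s$ is an \emph{ascent} of $z_{m-1}$, and $W^I$ is not closed under length-increasing left multiplication. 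Resolving this — either by a direct exchange-type argument (\Cref{thm:strongexchange}), tracking along the chain $z_0,\dots,z_m$ the indices at which $s$ is a descent and checking that the corresponding steps collapse under multiplication by $s$, or by appealing to the reduced-word (subword) structure of $W^I$ as in BB05 — is the technical heart of the proof; once $v:=sz_{m-1}\in W^I$ is secured, it is the desired coatom and the induction closes.
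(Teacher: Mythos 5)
Your skeleton is sound, and every step you actually carry out checks against the tools quoted in the paper: the descent lemma (if $w\in W^I$ and $s\in D_L(w)$ then $sw\in W^I$), the reduction to finding some $v\in W^I$ with $u\le v<w$ and $\ell(v)=\ell(w)-1$, the dichotomy forced by \Cref{lifting}, and the deductions $s\notin D_L(z_{m-1})$, $u\le sz_{m-1}$, $sz_{m-1}<w$ (for $m=1$ note $z_{m-1}=su$, where $s\notin D_L(su)$ holds trivially, since your lifting argument needs a strict inequality). But the proposal stops exactly at the point you yourself call the technical heart: you never prove $sz_{m-1}\in W^I$, only list strategies that might prove it. As written this is a genuine gap, and it is not a vacuous worry, since $W^I$ is in general not closed under length-increasing left multiplication by a simple reflection.

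The missing step is, however, true in your situation and closes in two lines from facts already in the paper. Suppose $sz_{m-1}\notin W^I$. By \Cref{thm:factorize} the $W_I$-part of $sz_{m-1}$ is nontrivial, so $\ell\bigl((sz_{m-1})^I\bigr)\le \ell(sz_{m-1})-1=\ell(z_{m-1})$; on the other hand, applying \Cref{prop:order_preserving} to $z_{m-1}<sz_{m-1}$ gives $z_{m-1}=(z_{m-1})^I\le (sz_{m-1})^I$, and comparing lengths forces $(sz_{m-1})^I=z_{m-1}$. Now project the relation $u\le sz_{m-1}$ with \Cref{prop:order_preserving}: $u=u^I\le (sz_{m-1})^I=z_{m-1}\le sw$, contradicting your standing assumption $u\not\le sw$ in this case. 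Hence $sz_{m-1}\in W^I$, it is the desired coatom, and your induction closes. For comparison, the paper offers no proof of \Cref{thm:reduced_subwords} at all — it is imported from Björner--Brenti — so there is no in-paper argument to measure against; your route, once completed as above, is a legitimate self-contained derivation from \Cref{lifting}, \Cref{thm:factorize} and \Cref{prop:order_preserving}.
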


Recall that the map $\phi_t$ in~\Cref{thm:cut_involution} is defined by $\phi_t(wW_I)=twW_I$ for $I\subseteq S$. The next lemma shows that the set $F=F(I,t)$ of cosets $wW_I$ that are fixed under $\phi_t$ corresponds exactly to the set of $w$ such that $\ell((tw)^I)=\ell(w^I)$.

\begin{lemma}\label{fixed points}
Let $W_I$ be the parabolic subgroup generated by $I\subseteq S$. Then $\phi_t(wW_I)=wW_I$ if and only if $\ell((tw)^I)=\ell(w^I)$.
\end{lemma}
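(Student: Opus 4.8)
The plan is to unwind both sides of the claimed equivalence using the coset-to-minimal-length-representative dictionary provided by \Cref{thm:factorize}. First I would observe that $\phi_t(wW_I) = wW_I$ holds if and only if $twW_I = wW_I$, i.e. if and only if $tw$ and $w$ lie in the same left coset of $W_I$. By \Cref{thm:factorize}, two elements $u, v \in W$ satisfy $uW_I = vW_I$ precisely when $u^I = v^I$. Hence the left-hand condition is equivalent to $(tw)^I = w^I$. So what really needs proving is that $(tw)^I = w^I$ \emph{as group elements} is equivalent to the weaker-looking condition $\ell((tw)^I) = \ell(w^I)$ \emph{on lengths}; the forward direction is trivial, and the content is the reverse implication.

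For the reverse direction, suppose $\ell((tw)^I) = \ell(w^I)$ but, for contradiction, $(tw)^I \neq w^I$. Write $u := w^I$ and $v := (tw)^I$, two distinct elements of $W^I$ of equal length. The key fact I would exploit is that $u$ and $v$ are related by a single reflection: since $tw = t(w^I w_I)$, one checks (using \Cref{thm:strongexchange} applied to a reduced word for $w^I$, together with \Cref{thm:factorize} controlling how the projection interacts with the $W_I$-part) that $v = P^I(tw)$ differs from $u$ by left-multiplication by a reflection, and in particular $u$ and $v$ are comparable-adjacent in a suitable sense. More concretely: by \Cref{parity}, $\ell(tw) \ne \ell(w) \pmod 2$, and the factorizations $w = w^I w_I$, $tw = (tw)^I (tw)_I$ both add lengths by \Cref{thm:factorize}; if the two quotient parts $w^I$ and $(tw)^I$ had the same length, then $w_I$ and $(tw)_I$ would have lengths of the same parity, which forces (examining the parabolic $W_I$ and applying \Cref{parity} within it together with the strong exchange property) a contradiction with $u \ne v$. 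The cleanest route is probably: apply the strong exchange property to $tw$ relative to a reduced word $w = s_1 \cdots s_k$, conclude $tw = s_1 \cdots \hat{s_i} \cdots s_k$, and then push through the projection $P^I$ using \Cref{prop:order_preserving} to see that $\{u, v\}$ are forced to be Bruhat-comparable; combined with equal length, comparability forces equality, the contradiction.

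The main obstacle I anticipate is the bookkeeping in the step that relates $(tw)^I$ to $w^I$ through a single reflection while keeping track of which side the multiplication happens on — the projection $P^I$ does not commute with left multiplication in general, so one cannot simply say $(tw)^I = t \cdot w^I$. The correct statement is more subtle: $(tw)^I$ equals either $w^I$ or a reflection times $w^I$ (with the reflection depending on $w$), and isolating exactly when the former occurs is the heart of the lemma. I would handle this by first reducing to the case $w = w^I \in W^I$ — writing $w = w^I w_I$ and noting $tw = (tw^I)w_I$, so that $(tw)^I = (tw^I)^I$ and $w^I = (w^I)^I$, which reduces the length comparison $\ell((tw)^I) = \ell(w^I)$ to the corresponding comparison for $w^I$ in place of $w$ — and then analysing $t w^I$ directly via \Cref{thm:strongexchange}: if $\ell(tw^I) > \ell(w^I)$ then $w^I < tw^I$ in Bruhat order so $(tw^I)^I \geq (w^I)^I = w^I$ by \Cref{prop:order_preserving}, with equality in length forcing equality; if $\ell(tw^I) < \ell(w^I)$ the symmetric argument with $tw^I$ in place of $w^I$ applies; either way $\ell((tw)^I) = \ell(w^I)$ forces $(tw)^I = w^I$, hence $twW_I = wW_I$, completing the reverse direction.
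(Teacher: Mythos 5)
Your final argument is correct, and in fact it is cleaner than the one in the paper. After the same reduction to $w = w^I$, you observe that $\ell(tw) \neq \ell(w)$ (parity), that left-multiplication by a reflection therefore makes $w$ and $tw$ Bruhat-comparable, and that the order-preserving projection $P^I$ (\Cref{prop:order_preserving}) then forces $w^I$ and $(tw)^I$ to be comparable; equal lengths and comparability give $w^I = (tw)^I$, hence $twW_I = wW_I$. The paper instead proves an intermediate claim — that $w = w^I$, $w < u$, and $\ell(u^I) = \ell(w)$ force $uW_I = wW_I$ — by repeatedly peeling a rightmost simple reflection off $u_I$ and invoking the lifting property (\Cref{lifting}) at each step to maintain $w \leq us_1\cdots s_j$; once $u_I$ is exhausted, it too concludes from ``Bruhat-comparable plus equal length implies equal.'' So both proofs ultimately establish $w \leq (tw)^I$ and close the argument the same way; the difference is that you get there in one step by citing \Cref{prop:order_preserving}, whereas the paper effectively re-derives the needed special case of order-preservation via iterated lifting. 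Your route is shorter and uses a result the paper already records; the paper's route is self-contained in that it avoids \Cref{prop:order_preserving} here. One small caveat: your middle paragraph invokes \Cref{thm:strongexchange} and \Cref{parity}-within-$W_I$, but these play no role in your final argument and could be cut entirely; also, under the lemma's hypotheses the case $\ell(tw) < \ell(w)$ cannot occur once $w = w^I$ (since then $\ell((tw)^I) \le \ell(tw) < \ell(w) = \ell(w^I)$, contradicting the assumed equality), though including it does no harm.
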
 

\begin{proof}
As $twW_I=wW_I$ implies $(tw)^I=w^I$, $\ell((tw)^I)=\ell(w^I)$ trivially follows. It therefore remains to prove the converse. 

Suppose that $\ell((tw)^I)=\ell(w^I)$ holds. We may assume the representative $w$ already has the minimum length, i.e., $w=w^I$.
By~\Cref{parity}, multiplying by a reflection changes the parity, so that $\ell(tw)$ and $\ell(w)$ are distinct. 
As $(tw)^I$ has the minimum length amongst all elements in $twW_I$,
\begin{align*}
    \ell(w)=\ell(w^I)=\ell((tw)^I) \leq \ell(tw),
\end{align*}
which gives $w\leq tw$. Since $w\neq tw$, we must have that $w < tw$. 

We claim that if $\ell(u^I)=\ell(w^I)=\ell(w)$ and $w<u$, then $uW_I=wW_I$. If so, taking $u=tw$ concludes the proof.
Since $\ell(u^I)=\ell(w^I)=\ell(w)$ and $w<u$, \Cref{thm:factorize} implies that $\ell(u_I)=\ell(u)-\ell(w)$ is positive. 
Thus, again by~\Cref{thm:factorize}, $u=u^I\cdot u_I$ with nontrivial~$u_I$. By choosing $s\in I$ in the rightmost position of a reduced word for $u_I$, we have $us<u$. 
On the other hand, since $w=w^I$, $ws>w$, which means $s$ is in $D_R(u)\setminus D_R(w)$. 
Hence, the lifting property,~\Cref{lifting}, implies that $us\geq w$.
If $us =w$, then $uW_I=usW_I=wW_I$. Otherwise, $us > w$, but we still have $\ell((us)^I) = \ell(u^I) = \ell(w)$. 
Repeating the same process by replacing $u$ by $us$ until we get $us_1 \cdots s_n = w$ proves the claim.
\end{proof}

We are now in a position to prove~\Cref{thm:cut_involution}, which we recall states that the map $\phi_t(wW_i)=twW_i$ is a cut involution of the $(I_1,\dots,I_r;W,S)$-graph, where $W_i$ is a shorthand for $W_{I_i}$.

\begin{proof}[Proof of~\Cref{thm:cut_involution}]
For each $i=1,2,\dots, r$, let
\begin{align*}
    L_i & :=\{wW_{i} : \ell((tw)^{I_i}) > \ell(w^{I_i})\},\\
    R_i & :=\{wW_{i} : \ell((tw)^{I_i}) < \ell(w^{I_i})\},\\ 
    F_i & :=\{wW_{i} : \ell((tw)^{I_i}) = \ell(w^{I_i})\}.
\end{align*}
Set $L:=\cup_{i=1}^{r} L_i$, $R:=\cup_{i=1}^{r} R_i$ and $F:=\cup_{i=1}^{r} F_i$.

We claim that $\phi_t$ is a cut involution such that $L$ and $R$ are mapped to each other and $F$ is fixed.
Firstly, $\phi_t$ is a homomorphism since it maps each edge $(wW_1,\dots,wW_r)$ to another edge $(twW_1,\dots,twW_r)$.
The fact that $F$ is fixed under $\phi_t$ follows from~\Cref{fixed points}.
If $wW_i\in L_i$, then $\ell((tw)^{I_i})>\ell(w^{I_i})$, so $u=tw$ satisfies $\ell(u^{I_i})>\ell((tu)^{I_i})$. This simply means that $uW_i\in R_i$. Since, similarly, $uW_i\in L_i$ if $wW_i\in R_i$, $L_i$ and $R_i$ are mapped to each other under $\phi_t$.

Finally, suppose that there is an $r$-edge $(wW_1,\dots,wW_r)$ of the $(I_1,\dots,I_r;W,S)$-graph that intersects both $L$ and $R$. That is, 
$\ell((tw)^I)>\ell(w^I)$ and $\ell((tw)^J)<\ell(w^J)$ for some $I=I_i$ and $J=I_j$. But this contradicts~\Cref{prop:order_preserving}, which implies that both of the projection maps $P^I : W \rightarrow W^I$ and $P^J:W \to W^J$ are order-preserving.
\end{proof}

In what follows, the cut involution $\phi_t$ of the $(I_1,\dots,I_r;W,S)$-graph that corresponds to a reflection $t\in T$ will always be oriented in a way that is consistent with the proof of~\Cref{thm:cut_involution}. That is, $L:=\cup_{i=1}^{r} L_i$, $R:=\cup_{i=1}^{r} R_i$ and $F:=\cup_{i=1}^{r} F_i$. 

Finally, we note the following intersection property that will allow us to encode the edges of the $(I_1,\dots,I_r;W,S)$-graph as cosets of the parabolic subgroup $W_I$ with $I = \cap_{i=1}^r I_i$.

\begin{proposition}[Proposition~2.4.1 in~\cite{BB05}]\label{prop:intersecting_parabolic}
    Let $W_I$ and $W_J$ be the parabolic subgroups generated by $I, J \subseteq S$. Then $W_{I\cap J}=W_I\cap W_J$. That is, the parabolic subgroup generated by $I\cap J$ is the intersection of the parabolic subgroups generated by $I$ and $J$.
\end{proposition}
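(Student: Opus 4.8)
The plan is to prove the two inclusions of $W_{I\cap J}=W_I\cap W_J$ separately. The inclusion $W_{I\cap J}\subseteq W_I\cap W_J$ is immediate: since $I\cap J\subseteq I$ and $I\cap J\subseteq J$, the subgroup $W_{I\cap J}$ is generated by a subset of $I$ and also by a subset of $J$, hence lies in both $W_I$ and $W_J$. The content is the reverse inclusion, which I would deduce from the deletion property, \Cref{thm:deletion}, together with \Cref{cor:generating}.

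The key step is the auxiliary claim that if $w\in W_I$, then every reduced expression for $w$ as an element of $W$ uses only simple reflections belonging to $I$. To prove it, write $w=s_1 s_2\cdots s_k$ with every $s_i\in I$, which is possible since $w\in W_I$. If this word is not reduced, then $\ell(w)<k$, and \Cref{thm:deletion} supplies indices $1\le i<j\le k$ with $w=s_1\cdots\hat{s_i}\cdots\hat{s_j}\cdots s_k$; the resulting expression is strictly shorter and still consists entirely of letters from $I$. Iterating, we reach a reduced expression for $w$ whose letters all lie in $I$. By \Cref{cor:generating}, the set of simple reflections occurring in a reduced expression for $w$ does not depend on the choice of reduced expression, so that set is contained in $I$, which proves the claim.

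Granting the claim, let $w\in W_I\cap W_J$. Applying the claim with $W_I$ shows that some reduced expression for $w$ uses only simple reflections from $I$; applying it with $W_J$ shows that this same reduced expression uses only simple reflections from $J$; hence it uses only simple reflections from $I\cap J$. This exhibits $w$ as a product of elements of $I\cap J$, so $w\in W_{I\cap J}$ and $W_I\cap W_J\subseteq W_{I\cap J}$, finishing the proof. I do not expect a genuine obstacle here; the only subtlety is that the deletion argument controls just one particular expression for $w$, so \Cref{cor:generating} is needed to upgrade this to a statement about all reduced expressions, and thereby about membership in the parabolic subgroups. One could instead cite the standard fact that the intrinsic length function of $(W_I,I)$ agrees with the restriction of $\ell$, but the argument above avoids even that.
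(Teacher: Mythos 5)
Your proof is correct. The paper does not include its own argument here --- it simply cites Proposition~2.4.1 of~\cite{BB05} --- so there is nothing to compare against within the paper itself. Your argument is the standard one: the easy inclusion $W_{I\cap J}\subseteq W_I\cap W_J$ is immediate, and the reverse inclusion follows from the fact (which you correctly derive from \Cref{thm:deletion} and \Cref{cor:generating}) that any reduced expression for $w\in W_I$ consists only of letters in $I$. This is essentially the same route as in~\cite{BB05}, which proves this proposition as a consequence of the coincidence of the intrinsic length function on $(W_I,I)$ with the restriction of $\ell$; your deletion-property iteration is simply an inlined proof of the relevant direction of that coincidence, so the two are close in spirit. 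One small point worth being explicit about: after the iterated deletions you obtain \emph{one} reduced expression using only letters from $I$, and you correctly invoke \Cref{cor:generating} to pass from ``some reduced expression'' to ``every reduced expression''; that invocation is needed precisely so that, for $w\in W_I\cap W_J$, you can appeal to a \emph{single} reduced expression that simultaneously certifies membership in $W_I$ and in $W_J$.
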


\section{Percolating sequences in Coxeter groups} \label{sec:percseq}

Let $\HH$ be an $r$-graph and let $\phi$ be a cut involution of $\HH$. The \emph{left-folding map} $\phi^{+} : V(\HH) \rightarrow V(\HH)$ is defined by
\begin{align*}
    \phi^{+}(v)= 
    \begin{cases}
    \phi(v) & \text{if} \: v \in R \\
    v & \text{if} \: v \in L \cup F.
    \end{cases}
\end{align*}
The \emph{right-folding map} $\phi^{-}$ is defined in a symmetric way by swapping $L$ and $R$. As folding maps are homomorphisms on $\HH$, the image of an edge subset $J\subseteq E(\HH)$ under a folding map is well defined as an edge subset of $\HH$.

For a vertex subset $K\subseteq V(\HH)$, let
\begin{align*}
    K^{+}(\phi):= \{v \in V(\HH) : \phi^{+}(v) \in K\} \text{ and } K^{-}(\phi):= \{v \in V(\HH) : \phi^{-}(v) \in K\}.
\end{align*}
Equivalently, $K^{+}(\phi)= (K\cap(L \cup F))\cup \phi(K\cap L)$ and $K^{-}(\phi)= (K\cap(R\cup F))\cup \phi(K\cap R)$. That is, to obtain $K^+(\phi)$, we keep all elements of $K$ that are in $L$ or $F$ but replace $K \cap R$ by $\phi(K\cap L)$.
Similarly, for an edge subset $J\subseteq E(\HH)$, let 
\begin{align*}
    J^{+}(\phi) :=  \{(v_1, v_2, \dots, v_r) \in E(\HH) : (\phi^+(v_1), \phi^+(v_2), \dots, \phi^+(v_r)) \in J \} 
\end{align*}
and let $J^-(\phi)$ be defined by replacing $\phi^+$ by $\phi^-$.

The folding maps capture a particular type of application of the Cauchy--Schwarz inequality to graph homomorphism densities. If we let $J\subseteq E(\HH)$ be an edge subset of an $r$-graph $\HH$, which can also be viewed as a spanning subgraph of $\HH$, then the Cauchy--Schwarz inequality gives
\begin{align*}
    t(J,G) \leq t(J^+(\phi),G)^{1/2}t(J^-(\phi),G)^{1/2},
\end{align*}
in a sense allowing us to transform $J$ into either $J^+(\phi)$ or $J^-(\phi)$. 
Our goal here will be to `lift' these transformations to the algebraic setting. 

To this end, we now observe that, in the $(I_1,\dots,I_r;W,S)$-graph, vertices and edges both correspond to algebraic objects of the same type, namely, cosets of a parabolic subgroup. 

\begin{proposition}\label{prop:correspondence}
    Suppose $I_1, \dots, I_r$ are subsets of $S$ and $I = I_1\cap I_2\cap\cdots\cap I_r$. If $W_i = W_{I_i}$ for $1 \le i \le r$, then the map $e = (wW_1,\dots,wW_r) \mapsto wW_I$ is a bijection from the edges of the $(I_1,\dots,I_r;W,S)$-graph to the left cosets of $W_I$.
\end{proposition}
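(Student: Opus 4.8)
The plan is to show the map $e \mapsto wW_I$ is well-defined and a bijection by exhibiting its inverse explicitly. First I would verify well-definedness: if $(wW_1,\dots,wW_r) = (w'W_1,\dots,w'W_r)$ as edges, then $w'W_i = wW_i$ for every $i$, so $w^{-1}w' \in W_i$ for each $i$, hence $w^{-1}w' \in \bigcap_{i=1}^r W_i$. By \Cref{prop:intersecting_parabolic}, applied inductively, $\bigcap_{i=1}^r W_{I_i} = W_{\cap_i I_i} = W_I$, so $w^{-1}w' \in W_I$ and thus $wW_I = w'W_I$. This shows the assignment is a genuine function from edges to cosets of $W_I$.

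Surjectivity is immediate: given any coset $wW_I$, the tuple $(wW_1,\dots,wW_r)$ is an edge of the $(I_1,\dots,I_r;W,S)$-graph by definition, and it maps to $wW_I$. For injectivity, suppose $(wW_1,\dots,wW_r)$ and $(w'W_1,\dots,w'W_r)$ are edges with $wW_I = w'W_I$. Then $w^{-1}w' \in W_I = \bigcap_{i=1}^r W_i$, so $w^{-1}w' \in W_i$ for each $i$, giving $wW_i = w'W_i$ for all $i$; hence the two edges are equal. Combining, $e \mapsto wW_I$ is a bijection.

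The only genuine content is the iterated intersection identity $\bigcap_{i=1}^r W_{I_i} = W_I$, which follows from \Cref{prop:intersecting_parabolic} by an easy induction on $r$: the base case $r=2$ is exactly that proposition, and for the inductive step one has $\bigcap_{i=1}^{r} W_{I_i} = \left(\bigcap_{i=1}^{r-1} W_{I_i}\right) \cap W_{I_r} = W_{I_1 \cap \dots \cap I_{r-1}} \cap W_{I_r} = W_{(I_1 \cap \dots \cap I_{r-1}) \cap I_r}$, again by the proposition. I do not anticipate any real obstacle here; the statement is essentially a bookkeeping consequence of the parabolic intersection property, and the main point to be careful about is simply invoking \Cref{prop:intersecting_parabolic} in its iterated form rather than leaving the $r$-fold intersection unjustified.
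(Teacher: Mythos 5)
Your proof is correct and follows essentially the same route as the paper's: both reduce well-definedness and injectivity to the identity $\bigcap_i W_{I_i} = W_I$ via \Cref{prop:intersecting_parabolic}. You spell out the $r$-fold induction and note surjectivity explicitly, which the paper leaves implicit, but the substance is identical.
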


\begin{proof}
    Suppose that $(w W_1, \dots,wW_r) = (w' W_1, \dots,w'W_r)$.
    This is equivalent to $w^{-1}w' \in W_i$ for all $i=1,2,\dots,r$, which means that $wW_I=w'W_I$ by \Cref{prop:intersecting_parabolic}.
    Thus, the map $\varphi(w W_1, \dots,wW_r)=wW_I$ is well-defined.
    If $wW_I=w'W_I$, then $w^{-1}w'\in W_I$, so $(w W_1, \dots,wW_r) = (w' W_1, \dots,w'W_r)$, i.e., $\varphi$ is an injection. 
\end{proof}

This key observation allows us to take a purely algebraic view of folding maps. 
For a reflection $t$ in a Coxeter system $(W,S)$, let
\begin{align*}
     L_t:=\{w\in W : \ell(tw) > \ell(w)\}~\text{and}~R_t:=\{w\in W : \ell(tw) < \ell(w)\}.
\end{align*}
Note that $L_t\cup R_t=W$, since $\ell(tw)\neq \ell(w)$ by~\Cref{parity}. 
The \emph{left-folding map} $t^+$ is then defined by 
\begin{align*}
    t^{+}(w)= 
    \begin{cases}
    tw & \text{if} \: w \in R_t \\
    w & \text{if} \: w \in L_t.
    \end{cases}
\end{align*}
Symmetrically, the \emph{right-folding map} $t^-(w)= tw$ if $w\in L_t$ and $w$ otherwise. 

If $I\subseteq S$ is a set of simple reflections and $W_I$ is the parabolic subgroup generated by $I$, we define 
$$t^+(wW_I):=t^+(w)W_I.$$ 
This left-folding map on the cosets of $W_I$ also preserves, in a sense, the `direction' of the folding map. 
To see this, let
\begin{align*}
     L_t^I &:=\{wW_{I} : \ell((tw)^{I}) > \ell(w^{I})\},\\
     R_t^I &:=\{wW_{I} : \ell((tw)^{I}) < \ell(w^{I})\},\\
     F_t^I &:=\{wW_{I} : \ell((tw)^{I}) = \ell(w^{I})\}.
\end{align*}
Then $t^+$ satisfies
\begin{align*}
    t^{+}(wW_I)= 
    \begin{cases}
    twW_I & \text{if} \: wW_I \in R_t^I \\
    wW_I & \text{if} \: wW_I \in L_t^I \cup F_t^I.
    \end{cases}
\end{align*}
Indeed, since $P^I : W \mapsto W^I$ is order-preserving, $w \in L_t$ implies $wW_I \in L_t^I \cup F_t^I$ and $w \in R_t$ implies $wW_I \in R_t^I \cup F_t^I$. Moreover, $w \in R_t$ and $wW_I \in F_t^I$ implies that $twW_I = wW_I$, by \Cref{fixed points}. 
We can similarly define the \emph{right-folding map} $t^{-}$ on $W/W_I$, i.e., $t^-(wW_I)=t^-(w)W_I$. Then $t^-(wW_I)=twW_I$ if $wW_I\in L_t^I$ and $wW_I$ otherwise.

For $J\subseteq W/W_I$, let
\begin{align*}
    J^+(t) := \{wW_I:t^+(wW_I)\in J\} \text{ and } J^-(t) := \{wW_I:t^-(wW_I)\in J\}.
\end{align*}
A sequence of subsets $J_0, J_1, \dots$ of $W / W_I$ is now said to be a \textit{folding sequence} if there is a reflection $t_i$ such that $J_{i+1} = J_i^+(t_i)$ or $J_{i+1} = J_i^-(t_i)$.

Recall that \Cref{thm:cut_involution} shows that $\phi_t(v)=twW_I$ is a cut involution of the $(I_1,\dots,I_r;W,S)$-graph $H$, where $v$ is the vertex in $H$ that corresponds to $wW_I$ for some $I=I_j$. Furthermore, from the proof of~\Cref{thm:cut_involution}, $L_t^I=L_{\phi_t}$ and $R_t^I=R_{\phi_t}$. 
Therefore, $t^+(v)=\phi_t^+(v)$ and $t^+(e)=\phi_t^+(e)$ for each vertex $v$ and edge $e$ in the $(I_1,\dots,I_r;W,S)$-graph.
For a vertex or edge subset $J$ of the $(I_1,\dots,I_r;W,S)$-graph, it follows that $J^+(\phi_t)$ is the same as $J^+(t)$, provided we identify $J$ with a subset of $W/W_I$ for a suitable choice of $I$, i.e., $I=I_j$ if $J$ is a vertex subset of the $j$-th part $W/W_j$ and $I=\cap_{i=1}^{r}I_i$ if $J$ is an edge subset. A similar correspondence holds for $t^-$ and $\phi_t^-$, as well as for $J^-(\phi_t)$ and $J^-(t)$.

\subsection{Percolating sequences: the Conlon--Lee theorem}

As a warm-up, we reprove the main result (Theorem 1.2) of~\cite{CL16}, which states that every reflection graph is weakly norming. 
We will not say much about the weakly norming property here, as we prove the result using a reduction step from~\cite{CL16}, but informally a graph is weakly norming if the graph homomorphism density can be used to define a norm on the space of real-valued functions on $[0,1]^2$. More formally, a result of Lee and Sch\"ulke~\cite{LSch21} says that a graph $H$ is weakly norming if and only if $t(H,\cdot)$ is a convex function on the set of graphons.

Let $\HH$ be an $r$-partite $r$-graph. Recalling some definitions from~\cite{CL16} (and their vertex-folding variants from~\cite{C24}),
a sequence  $J_0, J_1, \dots$ of edge (resp.~vertex) subsets of~$\HH$ is an \emph{edge-folding sequence} (resp. \emph{vertex-folding sequence}) in $\HH$ if there is a cut involution $\phi_i$ such that  
$J_{i+1}=J_i^{+}(\phi_i)$ or $J_{i+1}=J_i^{-}(\phi_i)$ for each $i$. If a finite edge-folding sequence starts from a set consisting of a single edge and ends with $E(\HH)$, then we call it an \emph{edge-percolating sequence}.
Analogously, a \emph{vertex-percolating sequence} is a finite vertex-folding sequence that begins with a single vertex and ends with the entire part which contains the starting vertex.
To specify the part that the sequence percolates over, we say that a vertex-percolating sequence is in $A\subseteq V(\HH)$ rather than in $\HH$ if $A$ is the part that the starting vertex belongs to.

The importance of these definitions may be seen in the following theorem, which says that if an $r$-graph has an edge-percolating sequence, then it is weakly norming.

\begin{theorem}[Theorem~3.3 in~\cite{CL16}]\label{thm:reduction}
Suppose that $\HH$ is an $r$-graph which is edge-transitive under the cut involution group, the subgroup of the automorphism group of $\HH$ generated by cut involutions. If there exists an edge-percolating sequence $J_0, J_1, \dots, J_N$, then $\HH$ is weakly norming.
\end{theorem}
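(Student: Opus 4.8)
The plan is to reduce the weakly norming property of $\HH$ to a multilinear Cauchy--Schwarz--Gowers inequality, to assemble that inequality from the edge-percolating sequence, and to invoke edge-transitivity only at the very end in order to symmetrise exponents. Write $m:=e(\HH)$; for an edge-labelling $\beta\colon E(\HH)\to\{\text{kernels}\}$ let $t_\HH(\beta)$ denote the homomorphism functional of $\HH$ with the kernel $\beta(e)$ placed on the edge $e$, and set $\|W\|_\HH:=t_\HH(W,\dots,W)^{1/m}$. A standard fact (see \cite{CL16}, and \cite{LSch21} for the convexity reformulation) is that $\HH$ is weakly norming as soon as (i) $t_\HH(W,\dots,W)\ge 0$ for every kernel $W$ and (ii) the inequality $|t_\HH(\beta)|\le\prod_{e\in E(\HH)}\|\beta(e)\|_\HH$ holds for every labelling $\beta$: indeed, applying (ii) to the $2^m$ labellings that interpolate between two fixed kernels $W_1$ and $W_2$, expanding $t_\HH(W_1+W_2,\dots,W_1+W_2)$ multilinearly, and summing a binomial yields the triangle inequality for $\|\cdot\|_\HH$, which together with (i) makes it a seminorm. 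So the goal is to prove (i) and (ii).

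The elementary step is the algebraic form of a single application of Cauchy--Schwarz. Given a cut involution $\phi$ of $\HH$ with tripartition $L\cup F\cup R$, splitting the integral defining $t_\HH(\beta)$ along the vertex cut $F$ writes it as $\int f(x_F)\,g(x_F)\,dx_F$; Cauchy--Schwarz in the $F$-variables, together with the fact that the automorphism $\phi$ preserves the underlying measure, then gives
\[
    |t_\HH(\beta)|\ \le\ t_\HH\!\bigl(\beta^{+}(\phi)\bigr)^{1/2}\ t_\HH\!\bigl(\beta^{-}(\phi)\bigr)^{1/2},
\]
where $\beta^{\pm}(\phi)(e):=\beta\!\bigl(\phi^{\pm}(e)\bigr)$ is the pullback of $\beta$ along the left/right folding map on $E(\HH)$, and where the right-hand factors are nonnegative, being (essentially) integrals of squares. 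This is the labelled refinement of the spanning-subgraph inequality from \Cref{sec:percseq}; the only subtlety is the treatment of edges lying inside $F$, handled exactly as in \cite{CL16}. Crucially, $\beta\mapsto\beta^{\pm}(\phi)$ is linear in $\beta$ and acts by precomposition with a fixed self-map of $E(\HH)$.

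Now feed in the percolating sequence $J_0=\{e_0\},\,J_1,\dots,J_N=E(\HH)$, with $J_{i+1}=J_i^{s_i}(\phi_i)$ for signs $s_i\in\{+,-\}$. Unwinding the defining relations yields $E(\HH)=J_N=\{\,e:\ \phi_0^{s_0}\bigl(\phi_1^{s_1}(\cdots\phi_{N-1}^{s_{N-1}}(e)\cdots)\bigr)=e_0\,\}$, so the composite folding map $\psi:=\phi_0^{s_0}\circ\cdots\circ\phi_{N-1}^{s_{N-1}}$ sends \emph{every} edge of $\HH$ to $e_0$; by linearity, applying the associated sequence of folding operations to an arbitrary labelling $\beta$ returns the constant labelling $e\mapsto\beta(e_0)$. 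Iterating the displayed Cauchy--Schwarz inequality along the binary tree built from $\phi_0,\dots,\phi_{N-1}$, and recursing on any leaf that has not yet collapsed to a single-edge labelling, therefore bounds $|t_\HH(\beta)|$ above by a product of factors $t_\HH(\beta(e),\dots,\beta(e))^{c}=\|\beta(e)\|_\HH^{\,mc}$ over single edges $e$; collecting powers,
\[
    |t_\HH(\beta)|\ \le\ \prod_{e\in E(\HH)}\|\beta(e)\|_\HH^{\,w_e},\qquad w_e\ge 0,\quad\sum_{e\in E(\HH)}w_e=m,
\]
with weights $w_e$ depending only on the scheme, not on $\beta$. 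Taking every $\beta(e)$ equal to a fixed kernel $W$ gives (i). To upgrade the $w_e$ to the constant $1$ and obtain (ii), we use edge-transitivity under the cut involution group $\Gamma$: conjugating the whole scheme by $g\in\Gamma$ produces another valid scheme, since $g\phi g^{-1}$ is again a cut involution (with fixed set $g(F)$ separating $g(L)$ from $g(R)$), whose weight vector is $w$ precomposed with $g$; multiplying the resulting inequalities over all $g$ in the finite group $\Gamma$ and taking a $|\Gamma|$-th root replaces the exponent of $\|\beta(e)\|_\HH$ by $\tfrac1{|\Gamma|}\sum_{g\in\Gamma}w_{g^{-1}(e)}$, which equals $1$ for every $e$ because $\Gamma$ acts transitively on $E(\HH)$ and $\sum_e w_e=m$.

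The step I expect to be the genuine obstacle is the claim in the third paragraph that the Cauchy--Schwarz scheme terminates in single-edge labellings. Only the distinguished ``percolating branch'' is immediately seen to collapse, via $\psi\equiv e_0$; one must argue that the side branches --- folding with complementary signs, where $\phi_i^{\,\bar s_i}=\phi_i\circ\phi_i^{\,s_i}$ --- also reduce to constant labellings, for instance by controlling the recursion with a potential such as the number of distinct kernels still appearing, and this is precisely where the full strength of the edge-percolating hypothesis is used. The measure-theoretic care needed around edges inside the fixed sets $F$ is a secondary technicality of the same flavour.
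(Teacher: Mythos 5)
You have correctly identified the critical step, but the suggested fix will not close it. Iterating the Cauchy--Schwarz bound along the binary tree built from $\phi_0,\dots,\phi_{N-1}$ produces $2^N$ leaves of the form $\beta\circ\phi_0^{s_0'}\circ\cdots\circ\phi_{N-1}^{s_{N-1}'}$, and the percolating sequence guarantees only that the \emph{single} leaf with signs $(s_0,\dots,s_{N-1})$ collapses to the constant labelling $\beta(e_0)$. Already for $\HH=C_6$ with the three vertex-axis reflections, one finds that the leaf $\beta\circ\phi_0^+\circ\phi_1^+\circ\phi_2^+$ has two distinct values on its edges, and recursing on it produces a chain of labellings that keep exactly two values indefinitely (the two-element level set just rotates around the cycle). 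So ``number of distinct kernels appearing'' is merely non-increasing under folding, not strictly decreasing, and the recursion need not terminate at all. There is also a smaller issue in your first paragraph: proving the H\"older inequality $|t_\HH(\beta)|\le\prod_e\|\beta(e)\|_\HH$ for \emph{arbitrary signed} kernels together with $t_\HH(W,\dots,W)\ge0$ would establish that $\HH$ is norming, which is strictly stronger than weakly norming and false for most reflection graphs. The inequality should be proved only for nonnegative kernels; nonnegativity is then automatic, and the triangle inequality for $\|W\|_{r(\HH)}=t_\HH(|W|,\dots,|W|)^{1/m}$ follows from $|W_1+W_2|\le|W_1|+|W_2|$ together with monotonicity of $t_\HH$ on nonnegative labellings.

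The missing idea replacing the tree recursion is an extremal argument over a finite orbit, which is how the proof in \cite{CL16} closes the gap. Set $\Phi(\beta):=t_\HH(\beta)/\prod_e t_\HH(\beta(e),\dots,\beta(e))^{1/m}$ for nonnegative labellings $\beta$. Your Cauchy--Schwarz step shows $\Phi(\beta)\le\Phi(\beta^+(\phi))^{1/2}\Phi(\beta^-(\phi))^{1/2}$, since (by the counting you implicitly do in your last paragraph) the geometric means in the denominators multiply back to $\prod_e t_\HH(\beta(e))^{1/m}$. The orbit $\{\beta\circ g\}$, where $g$ ranges over all compositions of folding maps, is a finite set of labellings because folding maps are self-maps of the finite edge set $E(\HH)$. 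Choose $\beta^*$ maximising $\Phi$ over this orbit; the Cauchy--Schwarz inequality combined with maximality forces $\Phi(\beta^{*\,+}(\phi))=\Phi(\beta^{*\,-}(\phi))=\Phi(\beta^*)$ for every cut involution $\phi$, so every labelling reachable from $\beta^*$ by folds has the same $\Phi$-value. Applying the folds $\phi_0^{s_0},\dots,\phi_{N-1}^{s_{N-1}}$ in that order lands on $\beta^*\circ\psi$, which is constant and hence has $\Phi=1$; therefore $\Phi(\beta)\le\Phi(\beta^*)=1$, giving the H\"older inequality with the desired equal exponents $1/m$ outright. Note in particular that this route does not need the binary tree to terminate, nor does it rely on the symmetrisation step in your fourth paragraph. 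Edge-transitivity under the cut involution group is used in \cite{CL16} to reduce from an arbitrary starting edge to the specific $e_0$ appearing in the hypothesis, not to rebalance exponents as you propose.
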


Therefore, to show that a graph is weakly norming, it is enough to prove that there exists an edge-percolating sequence. 
In light of~\Cref{prop:correspondence}, we can view both edge- and vertex-percolating sequences in reflection graphs in terms of elements in $W/W_I$ for an appropriate Coxeter system $(W, S)$ and $I \subseteq S$. 
A finite folding sequence that starts from one element and ends with the whole set $W/W_I$ is said to be a \textit{percolating sequence}. 
The following theorem, together with~\Cref{thm:reduction} and~\Cref{prop:correspondence}, then implies the main result of~\cite{CL16}, that every reflection graph is weakly norming.

\begin{theorem}\label{percolating in W}
    There is a percolating sequence of $W/ W_{I}$ starting from $W_I$.
\end{theorem}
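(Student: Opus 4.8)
The goal is to construct, for any Coxeter system $(W,S)$ and any $I\subseteq S$, a percolating sequence in $W/W_I$: a finite folding sequence $J_0,J_1,\dots,J_N$ with $J_0=\{W_I\}$ (the coset of minimum length, i.e. the identity element of $W^I$) and $J_N = W/W_I$. The natural strategy is to build up $W/W_I$ one Bruhat layer at a time, using the fact that left-folding by a well-chosen reflection $t$ can only enlarge a downward-closed set, and can be made to enlarge it by exactly the cosets we want. Concretely, I would prove by induction on $\ell$ that there is a folding sequence from $\{W_I\}$ to the set $D_\ell := \{wW_I \in W/W_I : \ell(w^I)\le \ell\}$ of all cosets whose minimal representative has length at most $\ell$. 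Since $W$ is finite this stabilises at $W/W_I$, giving the result.

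**The inductive step.** Suppose we have reached $D_\ell$ and want to reach $D_{\ell+1}$. The set $D_{\ell+1}\setminus D_\ell$ consists of cosets $wW_I$ with $\ell(w^I)=\ell+1$; by the chain property for quotients (\Cref{thm:reduced_subwords}) each such $w^I$ covers some element of $W^I$ of length $\ell$. I would process these length-$(\ell+1)$ cosets one at a time. Given a target coset $uW_I$ with $\ell(u^I)=\ell+1$, pick $s\in D_R(u^I)$ (so $u^I s$ has length $\ell$ and lies in $D_\ell$), and set $v := u^I s$. Now I want a reflection $t$ with $t v = u^I$ (equivalently $t = u^I v^{-1}$, a reflection since $\ell(u^I)=\ell(v)+1$ and \Cref{parity}/\Cref{thm:strongexchange} identify it as a conjugate of $s$), such that the left-folding map $t^+$ pulls $uW_I$ back into the current set while not removing anything already there. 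Here the key points to check are: (i) for the current set $K$ (which will be $D_\ell$ together with some already-added length-$(\ell+1)$ cosets), the folded set $K^{-}(t) = \{xW_I : t^-(xW_I)\in K\}$ contains $K$ — this uses that $t^-$ sends a coset $xW_I$ either to itself or to $txW_I$, and that $K$ is closed under the relevant moves because $D_\ell$ is Bruhat-downward-closed and \Cref{prop:order_preserving} guarantees $P^I$ preserves the order; and (ii) $uW_I \in K^-(t)$ because $t^-(uW_I) = vW_I \in D_\ell \subseteq K$, using \Cref{fixed points} to rule out the fixed-point case (we have $\ell((tu)^I)\ne\ell(u^I)$ since $t u = v$ has strictly smaller length after projection). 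Iterating over all length-$(\ell+1)$ cosets completes the passage from $D_\ell$ to $D_{\ell+1}$.

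**Where the difficulty lies.** The genuinely delicate part is verifying that a single folding move $J\mapsto J^{-}(t)$ adds the desired new coset \emph{without deleting any coset already present}, i.e. that $J\subseteq J^-(t)$. Unfolding the definition, $xW_I\in J^-(t)$ means $t^-(xW_I)\in J$, and $t^-(xW_I)$ equals $txW_I$ when $xW_I\in L^I_t$ and $xW_I$ otherwise; so $J\subseteq J^-(t)$ fails precisely when some $xW_I\in J\cap L^I_t$ has $txW_I\notin J$. To control this I need $J$ to be closed under ``going up by $t$'': if $xW_I\in J$ and $txW_I$ has larger projected length, then $txW_I\in J$ too. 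A clean way to arrange this is to keep $J$ of the special form $D_\ell \cup \{$a downward-closed-in-the-appropriate-sense batch of length-$(\ell+1)$ cosets$\}$ and to choose the reflections $t$ carefully — processing the length-$(\ell+1)$ cosets in an order compatible with the Bruhat order on $W^I$, and choosing each $t$ so that the only coset of $L^I_t$ whose $t$-image leaves $J$ is exactly the target. This bookkeeping is where \Cref{lifting} (the lifting property), \Cref{prop:order_preserving}, and \Cref{fixed points} all get used; the argument is essentially the $I=\emptyset$ argument of \cite{CL16} carried through the projection $P^I$, with \Cref{fixed points} absorbing the new phenomenon that folding can now fix a coset that is not pointwise fixed. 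Finiteness of $W$ makes termination automatic, so once the inductive step is established the theorem follows.
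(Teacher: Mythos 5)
Your overall strategy---peel off Bruhat layers $D_\ell = U_\ell$ one at a time, folding to add each length-$(\ell+1)$ coset while keeping the stack $D_\ell$ in place---is exactly the paper's argument (its Lemma~\ref{lem:stack} and Proposition~\ref{thm:induction}). Two points need fixing, and one choice you make differs from the paper's.

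The notation oscillates between $t^+/K^+(t)$ and $t^-/K^-(t)$: you announce "the left-folding map $t^+$ pulls $uW_I$ back into the current set," but then verify $K\subseteq K^-(t)$ and compute $t^-(uW_I)=vW_I$. These are different operations, and only the $+$ direction works here. With $t^-$: $t^-$ fixes $R^I_t\cup F^I_t$ and sends cosets in $L^I_t$ to ones of strictly \emph{larger} projected length, so if $J$ contains a length-$(\ell+1)$ coset $xW_I\in L^I_t$ with $txW_I\notin J$, then $J\not\subseteq J^-(t)$---and nothing in your setup rules that out. Also $t^-(uW_I)=uW_I$, not $vW_I$, since $uW_I\in R^I_t$. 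With $t^+$ throughout, both statements become correct and the required containment $J\subseteq J^+(t)$ is actually cheap: it follows from Lemma~\ref{lem:stack} applied with $U=D_\ell$, together with the observation that $t^+$ sends any already-added length-$(\ell+1)$ coset either to itself or (strictly decreasing projected length) down into $D_\ell\subseteq J$. In particular, the bookkeeping you flag as the delicate point is simpler than you think, and your proposed invariant---that the intermediate sets are stacks, or ``$D_\ell$ plus a downward-closed batch''---is neither true (the fold $J\mapsto J^+(t)$ can introduce cosets of much larger projected length) nor needed. You only need the inclusion $D_\ell\cup\{\text{already-added cosets}\}\subseteq J$, and that is what the above gives.

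The genuine difference: you peel off the \emph{rightmost} letter, writing $u^I=vs$ with $s\in D_R(u^I)$, and fold by the conjugate reflection $t=u^Is(u^I)^{-1}$. The paper peels off the \emph{leftmost} letter, writing $w_i=s_ir_i$ with $r_i\in W^I$ of length $\ell$ (using Theorem~\ref{thm:reduced_subwords}), and folds directly by the simple reflection $s_i$, since $s_iw_iW_I=r_iW_I$. Both prove Theorem~\ref{percolating in W}, but the paper's choice yields the stronger form of Proposition~\ref{thm:induction}---all folds are by \emph{simple} reflections---which is what the later proof of Theorem~\ref{main} relies on, so if you intend the result to feed into the strong-percolation argument, switch to the leftmost-letter decomposition.
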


By~\Cref{prop:correspondence}, \Cref{percolating in W} also proves the existence of vertex-percolating sequences.
The \emph{$(I_1,I_2\cup\dots\cup I_r;W,S)$-bigraph} is the bipartite graph between $W/W_1$ and $(W/W_2)\cup\dots\cup (W/W_r)$, where $wW_1$ and $wW_i$, $i>1$, are adjacent. In other words, we replace each $r$-edge in the $(I_1,\dots,I_r;W,S)$-graph by a star with $r-1$ leaves centred at the vertex in $W/W_1$ and simplify the resulting multigraph.
\begin{corollary}
    The $(I_1,I_2\cup\dots\cup I_r;W,S)$-bigraph has a vertex-percolating sequence in the part $W/W_1$.
\end{corollary}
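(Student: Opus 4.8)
The plan is to deduce this from~\Cref{percolating in W} applied with $I=I_1$, together with the observation that the algebraic folding maps $t^{\pm}$ on $W/W_1$ are realised by genuine cut involutions of the bigraph. Write $B$ for the $(I_1,I_2\cup\dots\cup I_r;W,S)$-bigraph and let $\HH$ denote the $(I_1,\dots,I_r;W,S)$-graph. These share the vertex set $(W/W_1)\cup\dots\cup(W/W_r)$, and by construction every edge $\{wW_1,wW_i\}$ of $B$, with $2\le i\le r$, is contained in the $r$-edge $(wW_1,\dots,wW_r)$ of $\HH$.

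First I would verify that, for each reflection $t\in T$, the map $\phi_t$ from~\Cref{thm:cut_involution} is also a cut involution of $B$, with the same tripartition $L\cup F\cup R$ that was fixed in the proof of~\Cref{thm:cut_involution}. It is an involutary automorphism of $B$ because it sends the edge $\{wW_1,wW_i\}$ to $\{twW_1,twW_i\}$, which is again an edge of $B$. For the vertex-cut condition, suppose toward a contradiction that some edge of $B$ meets both $L$ and $R$; then the $r$-edge of $\HH$ containing it also meets both $L$ and $R$, contradicting the fact, established in the proof of~\Cref{thm:cut_involution}, that $F$ is a vertex cut of $\HH$ with respect to $\phi_t$. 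Hence $\phi_t$ is a cut involution of $B$. Moreover, restricting $L$, $F$, $R$ to the part $W/W_1$ recovers exactly the sets $L_t^{I_1}$, $F_t^{I_1}$, $R_t^{I_1}$, so the vertex-folding maps $\phi_t^{\pm}$ agree on $W/W_1$ with the algebraic maps $t^{\pm}$, and consequently $J^{\pm}(\phi_t)=J^{\pm}(t)$ for every $J\subseteq W/W_1$, exactly as recorded above for the $(I_1,\dots,I_r;W,S)$-graph.

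Granting this dictionary, I would finish as follows. By~\Cref{percolating in W} with $I=I_1$ there is a percolating sequence of $W/W_1$ starting from $W_1$, that is, a folding sequence of subsets $J_0,J_1,\dots,J_N$ of $W/W_1$ with $J_0=\{W_1\}$, $J_N=W/W_1$, and $J_{i+1}=J_i^{+}(t_i)$ or $J_{i+1}=J_i^{-}(t_i)$ for reflections $t_i\in T$. By the previous paragraph this is the same as saying $J_{i+1}=J_i^{+}(\phi_{t_i})$ or $J_{i+1}=J_i^{-}(\phi_{t_i})$ with each $\phi_{t_i}$ a cut involution of $B$, so $J_0,\dots,J_N$ is a vertex-folding sequence in $B$. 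Since it begins with the single vertex $W_1$ and ends with the whole part $W/W_1$ containing it, it is a vertex-percolating sequence in the part $W/W_1$, as required.

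The only step requiring any care is the cut-involution check for $B$ in the second paragraph, and even that is immediate once one notes that each edge of the bigraph is contained in an $r$-edge of the $(I_1,\dots,I_r;W,S)$-graph, so that any vertex cut for $\phi_t$ in the latter is automatically a vertex cut in the former; I do not anticipate a genuine obstacle.
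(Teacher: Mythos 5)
Your proof is correct and follows essentially the same route the paper intends: apply \Cref{percolating in W} with $I=I_1$, use \Cref{thm:cut_involution} to translate the algebraic folding maps $t^{\pm}$ into genuine cut involutions, and note that because every edge $\{wW_1,wW_i\}$ of the bigraph lies inside the $r$-edge $(wW_1,\dots,wW_r)$, the vertex-cut property and the tripartition carry over verbatim. The only thing you add is an explicit check that $\phi_t$ remains a cut involution of the bigraph, which the paper leaves implicit; this is a worthwhile clarification but not a different argument.
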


This reproves the main result (Theorem 3.1) of~\cite{C24}, an interesting paper of Coregliano where he proves variations on many of the results in~\cite{CL16} and uses them to give a quantitative improvement on the result of Conlon and Lee~\cite{CL20} saying that sufficiently large blow-ups of any fixed bipartite graph satisfy Sidorenko's conjecture. While he uses slightly different terminology, for instance, saying left-cut-percolating instead of vertex-percolating, the statement above is the same as his main theorem once appropriately rephrased.

We now turn to the proof of~\Cref{percolating in W}. Following~\cite{CL16}, a \textit{stack} is a subset $U \subseteq W/W_I$ with the property that if $wW_I \in U$, then $w'W_I \in U$ for every $w'<w$.

\begin{lemma} \label{lem:stack}
Let $J\subseteq W/W_I$. If $U\subseteq J$ is a stack, then $U \subseteq J^{+}(t)$ for every $t\in T$.
\end{lemma}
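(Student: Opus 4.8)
The plan is to show that if $U \subseteq J$ is a stack and $t \in T$ is any reflection, then $U \subseteq J^+(t)$. By definition, $J^+(t) = \{wW_I : t^+(wW_I) \in J\}$, so it suffices to prove that $t^+(wW_I) \in J$ for every $wW_I \in U$. Since $U \subseteq J$, it is enough to show that $t^+(wW_I) \in U$ whenever $wW_I \in U$; that is, that the stack $U$ is closed under the left-folding map $t^+$. Recall from the discussion preceding the lemma that
\begin{align*}
    t^{+}(wW_I)=
    \begin{cases}
    twW_I & \text{if } wW_I \in R_t^I, \\
    wW_I & \text{if } wW_I \in L_t^I \cup F_t^I.
    \end{cases}
\end{align*}
In the second case there is nothing to prove, so the entire content of the lemma is the case $wW_I \in R_t^I$: we must show that if $wW_I \in U$ and $\ell((tw)^I) < \ell(w^I)$, then $twW_I \in U$.

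**The key step.** Assume $wW_I \in U \cap R_t^I$. Choose the minimal-length coset representative, so we may take $w = w^I \in W^I$. Then $\ell((tw)^I) < \ell(w) = \ell(w^I)$. Since $(tw)^I$ is the minimal representative of the coset $twW_I$ and its length is strictly less than $\ell(w)$, I claim $(tw)^I < w$ in the Bruhat order. Indeed, by \Cref{parity} the lengths $\ell(tw)$ and $\ell(w)$ differ in parity, hence are distinct; combined with \Cref{thm:strongexchange} (strong exchange), $\ell(tw) < \ell(w)$ gives $tw = s_1 \cdots \hat{s_i} \cdots s_k$ for a reduced expression $w = s_1 \cdots s_k$, so $tw < w$ by the subword property, and then \Cref{prop:order_preserving} gives $(tw)^I \leq w^I = w$; equality is excluded since $tw \ne w$, so $(tw)^I < w$. (One should double-check the sign of the length comparison: $wW_I \in R_t^I$ means $\ell((tw)^I) < \ell(w^I)$; since $\ell(w^I) = \ell(w) \le \ell(tw)$ would force $w \le tw$, and in fact we need $\ell(tw) < \ell(w)$ to apply strong exchange directly — if instead $\ell(tw) > \ell(w)$, then apply strong exchange to $u := tw$ with the reflection $t$, giving $w = tu < u = tw$, and then $\ell(w^I) \le \ell((tw)^I)$ by order-preservation, contradicting $wW_I \in R_t^I$; so the case $\ell(tw)>\ell(w)$ cannot occur, and $\ell(tw)<\ell(w)$, whence $tw < w$ as claimed.)

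**Finishing via the stack property.** Now I have $(tw)^I < w$ with $wW_I \in U$. By definition of a stack, $w'W_I \in U$ for every $w' < w$; applying this with $w' = (tw)^I$ gives $(tw)^I W_I \in U$. But $(tw)^I W_I = twW_I$, so $twW_I \in U \subseteq J$, which is exactly $wW_I \in J^+(t)$. This completes the argument.

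**Anticipated obstacle.** The only genuinely delicate point is confirming that $wW_I \in R_t^I$ forces $\ell(tw) < \ell(w)$ at the group level (not merely at the quotient level), so that strong exchange applies to produce $tw < w$ in $W$ before projecting. This is handled by the parity/order-preservation dichotomy sketched above, using \Cref{parity}, \Cref{thm:strongexchange}, and \Cref{prop:order_preserving}; everything else is a direct unwinding of definitions. Note that the hypothesis "$t^+$ uses the minimal representative" is automatic because $(tw)^I$ is by definition the minimal-length element of $twW_I$, and its length is what defines membership in $R_t^I$ versus $F_t^I$.
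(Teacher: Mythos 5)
Your proof is correct and follows the same basic strategy as the paper (reduce to showing $U$ is closed under $t^+$, then use the stack property via the order-preserving projection $P^I$), but your case split is at the coset level ($R_t^I$ versus $L_t^I \cup F_t^I$) while the paper splits at the group level on an arbitrary representative: $w \in R_t$ versus $w \in L_t$. The paper's split avoids the extra contradiction argument you needed to transfer $\ell((tw)^I) < \ell(w^I)$ up to $\ell(tw) < \ell(w)$; with $w \in R_t$ one has $\ell(tw) < \ell(w)$ immediately, hence $tw < w$, and the stack property does the rest, while $w \in L_t$ is the trivial case. Your version is correct but slightly longer. One small slip: when excluding equality in $(tw)^I \leq w^I = w$, you cite ``$tw \neq w$,'' which does not by itself rule out $(tw)^I = w^I$ (that can happen when $wW_I$ is fixed). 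The right reason — which you already have in hand — is that $\ell((tw)^I) < \ell(w^I)$ by the hypothesis $wW_I \in R_t^I$, so the two elements have different lengths and cannot be equal.
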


\begin{proof}
Let $wW_I \in U$.
If $w \in R_t$, then $wW_I\in R_t^I\cup F_t^I$, so $t^+(wW_I)=twW_I$. As $twW_I\in U$ since $tw<w$ and $U$ is a stack, it follows that $wW_I\in J^+(t)$.
Next, if $w \in L_t$, then $wW_I\in L_t^I\cup F_t^I$, so $t^+(wW_I)=wW_I$. Then $wW_I\in J$ implies $wW_I\in J^+(t)$.   
\end{proof}

Let $U_L := \{wW_I : \ell(w^I) \leq L\}$, which is a stack by definition. The following proposition states that there exists a folding sequence starting from $U_L$ and ending with $U_{L+1}$. Applying this proposition repeatedly for each $L$ then produces a percolating sequence.

\begin{proposition}\label{thm:induction}
Let  $J \subseteq W/W_I$. If $U_L \subseteq J$, then there exists a folding sequence $J=J_0,J_1,\dots,J_n$ such that $U_{L+1}\subseteq J_n$. Furthermore, for each $i = 0,1,\dots,n-1$, $J_{i+1}=J_i^+(s_i)$ for a simple reflection $s_i\in S$.
\end{proposition}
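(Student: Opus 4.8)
The plan is to induct on the set $J' := U_{L+1}\setminus J$ of cosets of length exactly $L+1$ that are missing from $J$; the goal is to use one folding step $J_i \mapsto J_i^+(s_i)$ for a carefully chosen simple reflection $s_i$ to add at least one element of $J'$ while never losing any element of $U_L$, which is safe by \Cref{lem:stack} since $U_L$ is a stack contained in every $J_i$ throughout the process. So fix a coset $wW_I \in U_{L+1}\setminus J$, where we may take $w = w^I$ to be the minimum-length representative, so $\ell(w) = L+1 \geq 1$. Since $w \neq e$, there is a simple reflection $s \in D_L(w)$, i.e. $\ell(sw) < \ell(w)$; in fact $\ell(sw) = L$, so $swW_I \in U_L \subseteq J$ (here one should check that $sw$ is still a minimum-length coset representative, which follows from \Cref{thm:factorize} together with $w = w^I$). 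The key point is then that the left-folding map $s^+$ associated to the \emph{simple} reflection $s$ sends $wW_I$ to $swW_I$: indeed $w \in R_s$ because $\ell(sw) < \ell(w)$, so $swW_I \in R_s^I \cup F_s^I$... wait — one must be careful here: $s^+$ moves a coset in $R_s^I$ but fixes one in $F_s^I$. So the real content is to show $wW_I \in R_s^I$, equivalently $\ell((sw)^I) < \ell(w^I) = \ell(w)$, which holds since $(sw)^I$ has length at most $\ell(sw) = L < L+1$. Hence $s^+(wW_I) = swW_I \in J$, so $wW_I \in J^+(s)$.

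The main obstacle, and the reason a single folding step does not finish the argument, is that applying $s^+$ may simultaneously \emph{remove} from $J$ some other coset $w'W_I$ of length $L+1$ that was already present: namely any $w'W_I$ with $s^+(w'W_I) \notin J$. Such $w'W_I$ must lie in $R_s^I$ with $sw'W_I \notin J$; but $sw'$ then has length $L+1$ as well (it cannot have length $\leq L$, else $sw'W_I \in U_L \subseteq J$), so these $w'$ are again cosets of length $L+1$ lying outside $U_L$. The plan is to package this into an induction on $|U_{L+1}\setminus J_i|$, or more precisely to observe that $s^+$ is a length-nonincreasing involution-like map on the fibres of $P^I$ restricted to length-$(L+1)$ cosets, and that the only "damage" it can do at level $L+1$ is to swap pairs within a single $s$-orbit; by choosing $s$ appropriately (for instance, processing the missing cosets one orbit at a time, or iterating over all $s \in S$) one gains a missing coset without net loss. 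Concretely I would argue: let $J_0 = J$; having built $J_i$ with $U_L \subseteq J_i$, if $U_{L+1} \not\subseteq J_i$ pick a missing $wW_I$ of minimum length (necessarily $L+1$) and a simple $s \in D_L(w)$ as above, set $J_{i+1} = J_i^+(s)$, and verify (i) $U_L \subseteq J_{i+1}$ by \Cref{lem:stack}, (ii) $wW_I \in J_{i+1}$ by the computation above, and (iii) any coset newly removed has length $L+1$ and its $s$-image $sw'W_I$ was the newly-added type, so a suitable potential function — e.g. the sum of $\ell(w')$ over missing $w'W_I$, or the count of missing length-$(L+1)$ cosets together with a secondary tie-break — strictly decreases. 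The loop terminates because $W/W_I$ is finite, yielding the desired folding sequence $J = J_0, J_1, \dots, J_n$ with $U_{L+1} \subseteq J_n$.

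A cleaner alternative I would try first, to avoid fiddly potential-function bookkeeping: enumerate $S = \{s_1, \dots, s_m\}$ and apply $s_1^+, s_2^+, \dots, s_m^+$ in turn, claiming that $J^+(s_1)^+(s_2)\cdots{}^+(s_m) \supseteq U_{L+1}$. For each missing $wW_I$ with $w = w^I$ of length $L+1$, there is some $s_j \in D_L(w)$, and one shows by downward induction along the sequence that once $s_j^+$ has been applied the coset $wW_I$ is present and, crucially, that no later $s_k^+$ with $k > j$ removes it — this requires checking that $wW_I \in L_{s_k}^I \cup F_{s_k}^I$, or if $wW_I \in R_{s_k}^I$ then $s_k w W_I$ is still in the current set. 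By \Cref{cor:generating} and the fact that $w$ is a minimum-length coset representative, the simple reflections available in $D_L(w)$ interact controllably, and the order-preservation of $P^I$ (\Cref{prop:order_preserving}) keeps everything below level $L+1$ inside the stack $U_L$. I expect the bulk of the write-up to be this verification that a fixed pass over all simple reflections is both additive (gains everything at level $L+1$) and non-destructive (loses nothing), with \Cref{lem:stack} doing the heavy lifting for the "nothing below level $L+1$ is lost" half and a short subword/length argument handling level $L+1$ itself.
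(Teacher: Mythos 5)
Your ``cleaner alternative'' is the right approach and is essentially the paper's proof, differing only in the enumeration: the paper lists the length-$(L+1)$ elements $w_1,\dots,w_n$ of $W^I$, writes each as $w_i = s_ir_i$ with $s_i\in S$ the first letter of a reduced word and $r_i\in W^I$ of length $L$, and takes $J_{i+1}=J_i^+(s_{i+1})$. Then $w_iW_I$ enters $J_i$ because $r_iW_I\in U_L\subseteq J_{i-1}$ by \Cref{lem:stack}, and once present it never leaves, because any later $s_j^+$ either fixes $w_iW_I$ (if it lies in $L^I_{s_j}\cup F^I_{s_j}$) or sends it to $s_jw_iW_I$, whose projection has length $<L+1$ and hence lies in $U_L\subseteq J_{j-1}$. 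Passing over all of $S$ once, as you suggest, would do the same job.

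The ``main obstacle'' paragraph in your first approach, and the potential-function machinery built to address it, rests on a misstep. You argue that a removed coset $w'W_I$ of length $L+1$ must lie in $R_s^I$ with $sw'W_I\notin J$, and then assert ``$sw'$ then has length $L+1$ as well (it cannot have length $\le L$, else $sw'W_I\in U_L\subseteq J$).'' But $w'W_I\in R_s^I$ means precisely $\ell\bigl((sw')^I\bigr)<\ell\bigl((w')^I\bigr)=L+1$, so $\ell\bigl((sw')^I\bigr)\le L$ and $sw'W_I\in U_L\subseteq J$ --- flatly contradicting $sw'W_I\notin J$. So there is no removal to worry about: while $U_L\subseteq J$, no simple folding can ever delete a length-$(L+1)$ coset, and \Cref{lem:stack} preserves $U_L\subseteq J_i$ at every step. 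You had both ingredients (order-preservation of $P^I$ and the stack containment) but recorded their conjunction as a case to handle rather than a contradiction, which led you to invent an unnecessary induction. Once this is seen, your first approach collapses to the paper's one-pass argument with no bookkeeping required.
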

\begin{proof}
Let $\{w_1, w_2, \dots, w_n\}$ be the set of all elements of length $L+1$ in $W^I$ and let $s_i$ be the first simple reflection in a reduced expression for $w_i$, i.e., $w_i = s_i r_i$ for some $r_i\in W^I$ of length $L$ by \Cref{thm:reduced_subwords}. In particular, $r_iW_I\in U_L$.
 
We claim that the folding sequence $J_0,J_1,\dots,J_n$ defined by $J_{i+1}=J_i^+(s_{i+1})$ ends with a set $J_n$ that contains $U_{L+1}$.
We first check that $w_iW_I\in J_i$ for $i \geq 1$.
By~\Cref{lem:stack}, $U_L\subseteq J_{i-1}$ and thus $r_iW_i\in J_{i-1}$. As $L=\ell(r_i)<\ell(w_i)$, $r_iW_I$ is in $L_{s_i}^I$, whereas $s_ir_iW_I=w_iW_I$ is in $R_{s_i}^I$. Hence, $s_i^+(w_iW_I)=r_i W_I\in J_{i-1}$, so
 $w_iW_I \in J_{i-1}^+(s_i)=J_{i}$.

It now suffices to prove that $w_iW_I$ never disappears from $J_j$ for $j>i$. Suppose that $w_iW_I\in J_j$ for some $j\geq i$. If $w_iW_I\in L_{s_j}^I \cup F_{s_j}^I$, then $s_j^+(w_iW_I)=w_iW_I\in J_j$, so $w_iW_I\in J_{j+1}=J_j^+(s_j)$.
Suppose now that $w_iW_I\in R_{s_j}^I$, i.e., $\ell((s_jw_i)^I)<\ell((w_i)^I)$. As $w_i\in W^I$ already, $(w_i)^I=w_i$ and hence $\ell((s_jw_i)^I)<L+1$. Thus, $s_jw_iW_I$ is in $U_L$, so it is also in $J_j$. Hence, $s_j^+(w_iW_I)=s_jw_iW_I\in J_j$, so $w_iW_I$ is again in $J_{j+1}$, completing the proof.
\end{proof}

This now easily implies \Cref{percolating in W}.

\begin{proof} [Proof of \Cref{percolating in W}]
Note that $U_0=\{W_I\}$. Applying \Cref{thm:induction} repeatedly from $L=0$ to the maximum length proves the existence of a percolating sequence starting from $J_0=U_0$.
\end{proof}

\subsection{Strong percolating sequences: the Janzer--Sudakov theorem}

While it is often easy to find many homomorphic copies of a given bipartite graph $H$ in another sparse $n$-vertex graph $G$, it can be challenging to show that the copies are non-degenerate, that is, that no two vertices of $H$ map to the same vertex in $G$. This issue is often the main obstacle for giving accurate estimates for the extremal number $\ext(n,H)$. For instance, this remains the chief impediment towards fully resolving the rational exponents conjecture of Erd\H{o}s and Simonovits~\cite{E81}, which states that for every rational number $r \in [1,2]$ there exists a graph $H_r$ such that $\ext(n, H_r) = \Theta(n^r)$ (though see~\cite{BC18} for a proof when $H_r$ is allowed to be a finite family and~\cite{CJ22} and its references for partial progress on the original conjecture).

An important idea used by Janzer and Sudakov in~\cite{JS24}, and observed independently by Kim, Lee, Liu and Tran in~\cite{KLLT24}, is that a suitable application of the Cauchy--Schwarz inequality can sometimes make the degree of degeneracy worse. What we mean by this is that if there are many homomorphic copies of $H$ where $v_1$ and $v_2$ both map to the same vertex in $G$, then there are also many homomorphic copies of $H$ where $v_1$, $v_2$ and another vertex $v_3$ all map to the same vertex.
Assuming $H$ has an appropriate form, we can repeat this process until we find many homomorphic copies of $H$ where all the vertices on one side of the bipartition of $H$ map to a single vertex in $G$, that is, many homomorphic copies of $H$ span a star. As is usual when studying extremal numbers, we can assume that $G$ is almost regular and, therefore, we can easily estimate the number of stars from above. Trading this off against the lower bound coming from the process above then allows us to show that the number of degenerate copies is small, which in turn implies that there must be at least one, but in fact many, non-degenerate copies of $H$ in $G$.

Janzer and Sudakov's key contribution was to show that this degeneracy-spreading process applies to graphs $H$ having vertex-percolating sequences that satisfy some additional conditions. If $H$ is a connected bipartite graph with bipartition $A\cup B$, a finite vertex-folding sequence $J_0,J_1,\dots,J_N$ of subsets of $A$ is said to be a \emph{strong vertex-percolating sequence} in $A$ if
\begin{enumerate}[(i)]
    \item $J_0$ consists of two vertices in $A$;
    \item $J_N=A$; 
    \item each $J_i$ intersects both $F_\phi\cup L_\phi$ and $F_\phi\cup R_\phi$, where $\phi$ is the cut involution such that $J_{i+1}=J_i^+(\phi)$ or $J_{i+1}=J_i^-(\phi)$.
\end{enumerate}
The graph $H$ \emph{strongly percolates} if any set $J_0$ of order two chosen from either $A$ or $B$ extends to a strong percolating sequence in the corresponding part. In their paper, Janzer and Sudakov refer to such graphs as {\it reflective graphs}, though we avoid the phrase because of its proximity to reflection graphs.

If we think of $J_i$ as being the vertices of $H$ that map to the same vertex in $G$ after $i$ steps of our degeneracy-spreading process, then, provided the third condition holds, a further application of the Cauchy--Schwarz inequality cannot eliminate the degeneracy. Moreover, by the second condition, we ultimately arrive at the entirety of $A$ or $B$, regardless of which pair $J_0$ we started with. For more details, we refer the interested reader to~\cite[Lemma~2.11]{JS24}.

We may now rephrase~\cite[Theorem~2.16]{JS24} as follows. Recall that a graph $H$ is \emph{Sidorenko} if it satisfies $t(H,G)\geq t(K_2,G)^{e(H)}$ for any graph $G$. 
The assumption that $H$ is Sidorenko is needed in order to guarantee that there are many homomorphic copies of $H$ in $G$. The key to proving the result is then in using the method we have described above to show that most of these copies must be non-degenerate. In particular, this means that we obtain not just one copy of $H$, but a \emph{supersaturation} result, meaning that there are, up to a constant factor, at least as many copies as in a random graph of the same density. 

\begin{theorem} \label{thm:strong percolation}
    Let $H$ be a connected bipartite graph with bipartition $A\cup B$ that is Sidorenko but not a tree.
    If $H$ strongly percolates, then there are constants $C, c >0$ such that any $n$-vertex graph $G$ with edge density $p\geq Cn^{-\frac{v-t-1}{e-t}}$, where $v=v(H)$, $e=e(H)$ and $t=\max\{|A|,|B|\}$, contains at least $cn^vp^e$ copies of $H$.
\end{theorem}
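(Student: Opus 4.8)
The plan is to obtain the statement as a direct consequence of \cite[Theorem~2.16]{JS24}, so the only genuinely new work is a translation of definitions: one must check that our hypothesis that $H$ \emph{strongly percolates} coincides with the percolation condition of Janzer and Sudakov (which they phrase in terms of what they call reflective graphs). The vertex-folding operations $J\mapsto J^{+}(\phi)$ and $J\mapsto J^{-}(\phi)$ that we introduced, following Coregliano~\cite{C24}, are exactly their cut-folding operations once notation is matched, and the three conditions in our definition of a strong vertex-percolating sequence translate line by line into the conditions they require. Once this dictionary is in place, the theorem is a restatement of \cite[Theorem~2.16]{JS24}, and nothing further is needed.

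For orientation I recall the shape of the Janzer--Sudakov argument, as it explains why the exponent $\tfrac{v-t-1}{e-t}$ is the relevant one. Since $H$ is Sidorenko, $t(H,G)\ge p^{e}$, so $G$ contains at least $n^{v}p^{e}$ homomorphic copies of $H$; after the standard reduction to an almost-regular subgraph we may assume every vertex of $G$ has degree of order $np$. Call a copy \emph{degenerate} if two vertices on the same side of the bipartition of $H$ receive the same image; there are only $O(1)$ choices of colliding pair, so it suffices to bound, for a fixed pair $v_{1},v_{2}\in A$ (the case $v_{1},v_{2}\in B$ being symmetric), the number $N$ of homomorphisms $\phi$ with $\phi(v_{1})=\phi(v_{2})$. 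This is where the strong vertex-percolating sequence $J_{0}=\{v_{1},v_{2}\},J_{1},\dots,J_{N}=A$ does its work: at each step $J_{i}\mapsto J_{i+1}=J_{i}^{\pm}(\phi_{i})$ one splits the relevant count along the vertex cut $F_{\phi_{i}}$ and applies the Cauchy--Schwarz inequality, and condition (iii) is exactly what guarantees that the constraint ``all vertices of $J_{i}$ share a common image'' survives the move; this is the content of \cite[Lemma~2.11]{JS24}. Iterating along the whole sequence and using the near-regularity of $G$, one bounds $N$ from above, up to a factor depending only on $H$, by the number of homomorphic copies of the star $K_{1,t}$ in $G$ --- the extreme configuration in which all of $A$, or all of $B$, collapses to a single vertex --- which is $O(n(np)^{t})=O(n^{t+1}p^{t})$.

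Putting this together, the number of degenerate copies is at most $C'n^{t+1}p^{t}$ for some $C'=C'(H)$, while the hypothesis $p\ge Cn^{-\frac{v-t-1}{e-t}}$ rearranges to $n^{v}p^{e}\ge C^{\,e-t}n^{t+1}p^{t}$; so taking $C$ large enough forces the number of degenerate copies to be at most $\tfrac12 n^{v}p^{e}$. Hence at least half of the $n^{v}p^{e}$ homomorphic copies are non-degenerate, and dividing by $|\mathrm{Aut}(H)|$ yields the claimed $cn^{v}p^{e}$ honest copies of $H$. The one genuinely technical point, which we therefore take from \cite{JS24}, is making the iteration of the previous paragraph quantitative: one must check that condition (iii) is robust enough to preserve the collapse through every fold, and that the resulting chain of Cauchy--Schwarz inequalities telescopes to give precisely the exponent $\tfrac{v-t-1}{e-t}$ rather than something weaker. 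This is carried out in \cite{JS24}, so in our setting the only thing that remains to be verified is the definitional correspondence described in the first paragraph.
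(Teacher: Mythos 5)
Your proposal matches the paper's approach exactly: the paper introduces the theorem with the words ``We may now rephrase~\cite[Theorem~2.16]{JS24} as follows'' and gives no independent proof, relying precisely on the definitional dictionary between strong vertex-percolating sequences and Janzer--Sudakov's reflective condition that you describe. Your supplementary sketch of the degeneracy-spreading argument and the rearrangement $p\geq Cn^{-(v-t-1)/(e-t)}\Leftrightarrow n^{v}p^{e}\geq C^{e-t}n^{t+1}p^{t}$ is consistent with the paper's surrounding discussion, so nothing further is needed.
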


We wish to study which reflection graphs satisfy the conditions of this theorem. To this end, let $H$ be the $(I,J;W,S)$-graph. By Theorems~\ref{thm:reduction} and \ref{percolating in W}, this is a weakly norming graph. In particular, $H$ is Sidorenko, as was noted by Hatami~\cite{H10}. 
We now give conditions under which a reflection graph $H$ contains a cycle or is connected. In particular, this result implies that if $I$ and $J$ are proper subsets of $S$ with $I\cup J=S$, then the $(I,J;W,S)$-graph is connected with a cycle.

\begin{lemma}\label{lem:notree}
    Let $H$ be the $(I,J;W,S)$-graph. Then 
    \begin{enumerate}[(i)]
        \item $H$ contains a cycle unless $I\subseteq J$ or $J\subseteq I$ and
        \item $H$ is connected if $I\cup J=S$.
    \end{enumerate}
\end{lemma}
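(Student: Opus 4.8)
The plan is to handle the two parts by separate elementary arguments about the structure of the $(I,J;W,S)$-graph $H$, which is bipartite between $W/W_I$ and $W/W_J$ with $xW_I$ and $yW_J$ adjacent precisely when the two cosets intersect. For part (ii) I would prove connectivity directly from the fact that $S=I\cup J$ generates $W$, and for part (i) I would compute the degrees of $H$, show that it is biregular, and deduce that under the hypothesis both part-degrees are at least $2$, so that the (finite) graph $H$ must contain a cycle.

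For part (ii), the key step is the observation that for every $w\in W$ and every $s\in S$ the vertices $wW_I$ and $wsW_I$ lie in the same connected component of $H$. If $s\in I$ these cosets are equal, and if $s\in J$ then $wW_J=wsW_J$, so the two edges $wW_I-wW_J$ and $wsW_J-wsW_I$ give a path between $wW_I$ and $wsW_I$; since $S=I\cup J$, one of the two cases always applies. As $W$ is generated by $S$, iterating this along any word for $w$ shows that every vertex of $W/W_I$ is joined to $W_I$, so all of $W/W_I$ lies in a single component $C$. The edge $W_I-W_J$ then places $W_J$ in $C$, and running the symmetric argument over $W/W_J$ shows $W/W_J\subseteq C$ as well; hence $H$ is connected.

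For part (i), I would first note that the neighbours of a vertex $wW_I$ are exactly the cosets $wuW_J$ with $u\in W_I$, and that $wuW_J=wu'W_J$ for $u,u'\in W_I$ if and only if $u^{-1}u'\in W_I\cap W_J=W_{I\cap J}$, using the intersection property \Cref{prop:intersecting_parabolic}. Hence $\deg(wW_I)=[W_I:W_{I\cap J}]$ for every $w$, and symmetrically $\deg(wW_J)=[W_J:W_{I\cap J}]$, so $H$ is biregular. Now $W_{I\cap J}=W_I$ would force every simple reflection of $I$ to lie in $W_{I\cap J}$, hence to lie in $I\cap J$ by \Cref{cor:generating}, i.e. $I\subseteq J$; therefore if $I\not\subseteq J$ and $J\not\subseteq I$ then $[W_I:W_{I\cap J}]\ge 2$ and $[W_J:W_{I\cap J}]\ge 2$, so $H$ has minimum degree at least $2$. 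Since $W$ is finite, $H$ is a finite graph, and a finite graph of minimum degree at least $2$ contains a cycle (take a longest path and close it up at an endpoint), which finishes the proof.

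The arguments are short and I do not expect a serious obstacle; the points that need care are the bookkeeping in the degree computation and, closely related, making sure the implication ``$W_{I\cap J}=W_I \Rightarrow I\subseteq J$'' is justified via \Cref{cor:generating}. It is also worth flagging that finiteness of $W$ is genuinely used in part (i): for infinite $W$ the conclusion can fail, since taking the infinite dihedral group with $I$ and $J$ its two singleton generating sets makes $H$ a two-way infinite path with no cycle.
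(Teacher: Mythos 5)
Your proof of part~(ii) is essentially the same as the paper's: you build the walk from $W_I$ to $wW_I$ one simple reflection at a time, while the paper groups consecutive letters of a word for $w$ into alternating blocks $w_1\in W_I$, $w_2\in W_J$, $\dots$, but the underlying mechanism (each step keeps either the $W_I$-coset or the $W_J$-coset fixed, hence stays within an edge) is identical.

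Your proof of part~(i) takes a genuinely different route. The paper picks $s\in I\setminus J$ and $s'\in J\setminus I$ and exhibits an explicit cycle $W_I,\ sW_J,\ ss'W_I,\ ss's W_J,\ \dots,\ (ss')^mW_I$ of length $2m$, where $m$ is the order of $ss'$, so it produces a concrete cycle whose length is controlled by the Coxeter matrix entry $m(s,s')$. You instead compute the two part-degrees $[W_I:W_{I\cap J}]$ and $[W_J:W_{I\cap J}]$ via \Cref{prop:intersecting_parabolic}, show that both are at least $2$ when neither $I\subseteq J$ nor $J\subseteq I$, and invoke the elementary fact that a finite graph of minimum degree at least $2$ contains a cycle. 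Both arguments are correct; yours is more abstract and yields only the existence of a cycle, while the paper's gives a specific short one, but since only existence is needed for \Cref{thm:strong percolation}, the two are interchangeable here. One small point worth spelling out in your degree argument: to conclude from $s\in W_{I\cap J}$ that $s\in I\cap J$, one should note that by the deletion property any word for $s$ in the letters $I\cap J$ reduces to a length-one reduced word, which by \Cref{cor:generating} must be $s$ itself; your invocation of \Cref{cor:generating} alone is slightly terse but the conclusion is correct. Your remark that finiteness of $W$ is genuinely used is also accurate, and in fact both proofs use it (the paper needs $(ss')^m=1$ for some finite $m$).
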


\begin{proof}
(i) Suppose that both $I\setminus J$ and $J\setminus I$ are nonempty and
    let $s\in I\setminus J$ and $s'\in J\setminus I$. Then there exists a minimum $m=m(s,s')$ such that $(ss')^m = 1$. In particular, $W_I, sW_J, ss'W_I,\dots, (ss')^mW_I$ forms a cycle of length $2m$.
    
(ii) Suppose that $I\cup J=S$ and let $w\in W$ be $w=s_1s_2\cdots s_\ell$ for $s_i\in S$. Then each $s_i$ is in $I$ or $J$, so $w$ can be written as $w=w_1w_2\cdots w_{2k}$, where each $w_{2i-1}\in W_I$ and $w_{2i}\in W_J$.
    Note that $W_I$ and $w_1W_J$ are adjacent as $w_1\in W_I\cap w_1W_J$. Similarly, $w_1W_J$ and $w_1w_2W_I$ are adjacent as $w_1w_2\in w_1w_2W_I\cap w_1W_J$. Again, $w_1w_2W_I$ and $w_1w_2w_3 W_J$ are adjacent since $w_1w_2w_3\in w_1w_2W_I\cap w_1w_2w_3W_J$. 
    Repeating this process gives a walk from $W_I$ to $wW_I$. 
    The same argument with $I$ and $J$ swapped gives a walk from $W_J$ to $wW_J$, so $H$ is connected.
\end{proof}

As the the $(I,J;W,S)$-graph is vertex-transitive on each side, we may always assume that $J_0$ consists of $\{W_I,wW_I\}$ for some $w\in W$ such that $wW_I\neq W_I$ or $\{W_J,wW_J\}$ for some $w\in W$ such that $wW_J\neq W_J$.
We formalise this reduction as follows.

\begin{lemma}\label{lem:reduction}
    Let $H$ be the $(I,J;W,S)$-graph for $I,J\subsetneq S$ with $I\cup J=S$. Then $H$ strongly percolates if and only if there exists a strong percolating sequence starting with $\{W_I,wW_I\}$ for each $w \in W$ such that $wW_I\neq W_I$ and starting with $\{W_J,wW_J\}$ for each $w \in W$ such that $wW_J\neq W_J$.
\end{lemma}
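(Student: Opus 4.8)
The plan is to deduce the full strong percolation property from the special case assumed in the hypothesis by transporting strong percolating sequences through the automorphisms of $H$ induced by left multiplication. The forward implication is immediate: if $H$ strongly percolates then, by definition, every two-element subset of $A=W/W_I$ or of $B=W/W_J$ extends to a strong percolating sequence in the corresponding part, and in particular the sequences starting from $\{W_I,wW_I\}$ and from $\{W_J,wW_J\}$ exist whenever these pairs consist of two distinct cosets.

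For the converse, the first step is to record that for each $g\in W$ the map $\alpha_g$ defined by $\alpha_g(wW_I)=gwW_I$ and $\alpha_g(wW_J)=gwW_J$ is an automorphism of $H$: it is a bijection on vertices, and it preserves adjacency since $w\in wW_I\cap wW_J$ is carried to $gw\in gwW_I\cap gwW_J$. In particular, $H$ is vertex-transitive on each side of its bipartition. The second, slightly more delicate, step is to check that these automorphisms are compatible with the folding operations: if $\phi$ is a cut involution of $H$ with tripartition $L\cup F\cup R$ and $\alpha$ is any automorphism of $H$, then $\phi':=\alpha\phi\alpha^{-1}$ is again a cut involution, with tripartition $\alpha(L)\cup\alpha(F)\cup\alpha(R)$, and for every vertex subset $K$ we have $\alpha(K^{+}(\phi))=(\alpha K)^{+}(\phi')$ and $\alpha(K^{-}(\phi))=(\alpha K)^{-}(\phi')$. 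Both statements follow directly from the definitions: $\alpha$ maps edges bijectively to edges, so $\alpha(F)$ is again a vertex cut, and applying $\alpha$ to the identity $K^{+}(\phi)=(K\cap(L\cup F))\cup\phi(K\cap L)$ gives the intertwining formula.

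With these facts in hand, the converse follows. Let $\{xW_I,yW_I\}$ be any two-element subset of $A$, so that $x^{-1}yW_I\neq W_I$. By hypothesis there is a strong percolating sequence $J_0,J_1,\dots,J_N$ in $A$ with $J_0=\{W_I,x^{-1}yW_I\}$ and $J_N=A$, where $J_{i+1}=J_i^{+}(\phi_i)$ or $J_{i+1}=J_i^{-}(\phi_i)$ for cut involutions $\phi_i$ satisfying the intersection condition~(iii). Applying $\alpha_x$, the sequence $\alpha_x(J_0),\dots,\alpha_x(J_N)$ begins at $\{xW_I,yW_I\}$ and ends at $\alpha_x(A)=A$; by the intertwining formula, $\alpha_x(J_{i+1})$ equals $(\alpha_x J_i)^{+}(\phi_i')$ or $(\alpha_x J_i)^{-}(\phi_i')$ with $\phi_i'=\alpha_x\phi_i\alpha_x^{-1}$; and $\alpha_x J_i$ meets $F_{\phi_i'}\cup L_{\phi_i'}$ and $F_{\phi_i'}\cup R_{\phi_i'}$ precisely because $J_i$ meets the corresponding sets for $\phi_i$ and $\alpha_x$ carries these to the tripartition pieces of $\phi_i'$. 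Hence $\alpha_x(J_0),\dots,\alpha_x(J_N)$ is a strong percolating sequence in $A$ extending $\{xW_I,yW_I\}$. Running the identical argument with $I$ replaced by $J$ handles every two-element subset of $B$, so $H$ strongly percolates.

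I expect the only step needing genuine (if short) verification rather than bookkeeping to be the compatibility of automorphisms with the folding operations, i.e.\ that conjugating a cut involution by an automorphism yields a cut involution and intertwines $K\mapsto K^{\pm}$; everything else reduces to the vertex-transitivity of the $(I,J;W,S)$-graph and should present no real obstacle.
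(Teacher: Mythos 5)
Your proposal is correct and follows essentially the same route as the paper: transport a strong percolating sequence from the canonical pair $\{W_I,wW_I\}$ to an arbitrary pair via an automorphism given by left multiplication, and check that conjugation by that automorphism sends cut involutions to cut involutions and intertwines the folding operations. The only (cosmetic) difference is how you establish that left multiplication by $w_1$ is an automorphism of $H$ — you verify it directly, whereas the paper realises it as the composition of the cut involutions $\phi_{s_1},\dots,\phi_{s_k}$ coming from a word $w_1=s_1\cdots s_k$ in simple reflections.
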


\begin{proof}
    Suppose that there exists a strong percolating sequence $J_0,J_1,\dots,J_N$ with $J_0=\{W_I,wW_I\}$ for each $w\in W$.
    Let $J_0'=\{w_1W_I, w_2W_I\}$ with $w_1W_I\neq w_2W_I$. Let $\psi$ be an automorphism that sends $W_I$ to $w_1W_I$. 
    Such an automorphism exists since each simple reflection $s\in S$ gives a cut involution of $H$ by~\Cref{thm:cut_involution} and so, if $w_1=s_1s_2\cdots s_k$ for $s_i\in S$, then the composition of the corresponding cut involutions gives an automorphism $\psi$ that sends $W_I$ to $w_1W_I$. Moreover, $\psi$ sends $w_1^{-1}w_2W_I$ to $w_2W_I$. Let $J_0,J_1,\dots, J_N$ be a strong percolating sequence starting from $J_0=\{W_I,w_1^{-1}w_2W_I\}$.
    
    For any cut involution $\phi$ of $H$, $\psi\phi\psi^{-1}$ is again a cut involution that sends $\psi(v)$ to $\psi(\phi(v))$ with $L=\psi(L_\phi)$, $R=\psi(R_\phi)$ and $F=\psi(F_\phi)$.
    Thus, taking $J_{i+1}':=J_i'^+(\psi\phi_i\psi^{-1})$, where $\phi_i$ is the cut involution such that $J_{i+1}=J_i^+(\phi_i)$, gives a strong percolating sequence starting from $J_0'$. 
\end{proof}

We now introduce an algebraic analogue of strong percolating sequences in a Coxeter system $(W,S)$. A folding sequence $J_0, J_1, \dots , J_N\subseteq W/W_I$ is a \emph{strong folding sequence} if each $J_i$ has nonempty intersection with both $L^I_{t}\cup F^I_t$ and $R^I_t\cup F^I_t$, where $t$ is the reflection such that $J_{i+1}=J_i^+(t)$ or $J_{i+1}=J_i^-(t)$.
A \emph{strong percolating sequence} is a strong folding sequence that starts from a pair of the form $\{W_I,wW_I\}$ and ends with the whole collection of left cosets $W/W_{I}$.
We say that $W/W_I$ \emph{strongly percolates} if, for any $w\in W \setminus W_I$ (or, equivalently, any $w\in W^I \setminus \{e\}$), there exists a strong percolating sequence starting from $\{W_I,wW_I\}$.
The importance of these definitions is that, by  \Cref{lem:notree,lem:reduction}, we have the following variant of \Cref{thm:strong percolation} for reflection graphs.

\begin{proposition}
    Let $H$ be the $(I,J;W,S)$-graph for $I,J\subsetneq S$ with $I\cup J=S$. If both $W/W_I$ and $W/W_J$ strongly percolate, then there are constants $C, c > 0$ such that any $n$-vertex graph $G$ with edge density $p\geq Cn^{-\frac{v-t-1}{e-t}}$, where $v=v(H)$, $e=e(H)$ and $t=\max\{|W/W_I|,|W/W_J|\}$,
    contains at least $cn^vp^e$ copies of $H$.
\end{proposition}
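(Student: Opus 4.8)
The plan is to deduce this proposition directly from \Cref{thm:strong percolation}. To apply that theorem to $H$, the $(I,J;W,S)$-graph, we must check four things: that $H$ is bipartite and connected, that $H$ is not a tree, that $H$ is Sidorenko, and that $H$ strongly percolates in the Janzer--Sudakov sense. The first three are quick consequences of results already in hand; the fourth, which requires passing from the algebraic notion of strong percolation of $W/W_I$ and $W/W_J$ to the combinatorial notion of strong percolation of $H$, is where the real work lies.

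By construction $H$ is bipartite with parts $A=W/W_I$ and $B=W/W_J$, both finite (as $W$ is finite) and of size at least two (as $I,J\subsetneq S$: any $s\in S\setminus I$ gives $sW_I\neq W_I$, and similarly for $J$). Since $I\cup J=S$, \Cref{lem:notree}(ii) gives that $H$ is connected. Since $I$ and $J$ are \emph{proper} subsets of $S$ with $I\cup J=S$, neither $I\subseteq J$ nor $J\subseteq I$ can hold (either would force one of $I,J$ to equal $S$), so \Cref{lem:notree}(i) gives that $H$ contains a cycle and is therefore not a tree. Finally, $H$ is a reflection graph, so by \Cref{thm:reduction} together with \Cref{percolating in W} it is weakly norming, and weakly norming graphs are Sidorenko by the observation of Hatami~\cite{H10}.

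It remains to show $H$ strongly percolates, and here I would argue as follows. By \Cref{lem:reduction}, and using vertex-transitivity of $H$ on each side, it suffices to produce, for each $w\in W$ with $wW_I\neq W_I$, a strong vertex-percolating sequence in $A$ starting from $\{W_I,wW_I\}$, and symmetrically for $B$. Fix such a $w$, which we may take in $W^I\setminus\{e\}$. The hypothesis that $W/W_I$ strongly percolates yields an algebraic strong folding sequence $J_0,J_1,\dots,J_N\subseteq W/W_I$ with $J_0=\{W_I,wW_I\}$, $J_N=W/W_I$, where $J_{i+1}=J_i^+(t_i)$ or $J_i^-(t_i)$ for some $t_i\in T$ and each $J_i$ meets both $L^{I}_{t_i}\cup F^{I}_{t_i}$ and $R^{I}_{t_i}\cup F^{I}_{t_i}$. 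Now \Cref{thm:cut_involution} says each $t_i$ induces a cut involution $\phi_{t_i}$ of $H$, and the correspondence recorded just after the proof of \Cref{thm:cut_involution} (namely $t_i^{\pm}=\phi_{t_i}^{\pm}$ on vertices, $J^{\pm}(t_i)=J^{\pm}(\phi_{t_i})$, and $L^{I}_{t_i}=L_{\phi_{t_i}}$, $R^{I}_{t_i}=R_{\phi_{t_i}}$, $F^{I}_{t_i}=F_{\phi_{t_i}}$) shows that the same sequence $J_0,\dots,J_N$, read as vertex subsets of $A\subseteq V(H)$, is a strong vertex-percolating sequence in $A$: it satisfies (i) $|J_0|=2$, (ii) $J_N=A$, and (iii) the both-sided intersection condition, which is verbatim the algebraic condition. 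Running the same argument for $B$ and invoking \Cref{lem:reduction} completes the proof that $H$ strongly percolates.

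With all hypotheses verified, apply \Cref{thm:strong percolation} to $H$ with $v=v(H)$, $e=e(H)$, and $t=\max\{|A|,|B|\}=\max\{|W/W_I|,|W/W_J|\}$; this produces the promised constants $C,c>0$ and the bound of $cn^vp^e$ copies of $H$ in any $n$-vertex $G$ with $p\geq Cn^{-(v-t-1)/(e-t)}$. I expect the main obstacle to be the translation in the previous paragraph: one must make sure the algebraic folding maps, folding sequences and the both-sided intersection condition correspond exactly to their combinatorial counterparts in $H$, and that the reduction in \Cref{lem:reduction} to starting pairs containing $W_I$ or $W_J$ (which itself rests on the cut involutions supplied by \Cref{thm:cut_involution}) is legitimate.
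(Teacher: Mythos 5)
Your proposal is correct and follows the same route the paper intends: the proposition is stated in the paper as an immediate consequence of \Cref{lem:notree}, \Cref{lem:reduction}, and \Cref{thm:strong percolation}, together with the algebraic-to-combinatorial dictionary recorded just after \Cref{thm:cut_involution}, and your write-up simply makes each of those deductions explicit. In particular, you correctly check that $I\cup J=S$ with $I,J\subsetneq S$ rules out $I\subseteq J$ and $J\subseteq I$ (so $H$ has a cycle and is connected), that $H$ is Sidorenko via weak norming, and that the identifications $L^I_t=L_{\phi_t}$, $R^I_t=R_{\phi_t}$, $F^I_t=F_{\phi_t}$ and $J^{\pm}(t)=J^{\pm}(\phi_t)$ carry the algebraic strong percolating sequences over verbatim to the combinatorial ones required by \Cref{lem:reduction}.
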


To apply this result, we would like to determine those proper subsets $I$ of $S$ for which $W/W_I$ strongly percolates. We answer this question completely.

\begin{theorem} \label{main}
    Let $I\subsetneq S$. Then $W/W_I$ strongly percolates if and only if $|I|=|S|-1$.
\end{theorem}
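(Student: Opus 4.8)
The plan is to prove both directions separately, starting with the easier "only if" direction. For the forward implication, suppose $|I| \le |S| - 2$, so there are at least two simple reflections $s, s'$ missing from $I$. I would show that no strong percolating sequence can exist by exhibiting a structural obstruction. The key point is that the starting pair $\{W_I, wW_I\}$ — for a well-chosen $w$, say $w = s$ — lies "too deep" on one side of every cut involution coming from a reflection $t$, so that the third condition in the definition of a strong folding sequence fails at the very first step, or more precisely, that no folding sequence starting from any two-element set can ever reach all of $W/W_I$ while staying strong. Concretely, I expect the obstruction to come from considering the projections to $W^I$ and tracking lengths: when $|S \setminus I| \ge 2$, the coset space $W/W_I$ is "wide" in a way that forces some $J_i$ in any percolating sequence to be entirely contained in one of $L_t^I$ or $R_t^I$ (missing $F_t^I$ as well) for whatever reflection $t$ is applied next, violating condition (iii). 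I would make this precise by choosing $w$ so that $\{W_I, wW_I\}$ spans an edge or short path lying in a single "wing," and arguing that strongness is preserved only if the sequence never separates this obstruction from the fixed set — a parity/length count should close this.

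For the "if" direction, assume $|I| = |S| - 1$, say $S \setminus I = \{s_0\}$. Now $W^I$ has a very rigid structure: by \Cref{thm:factorize} and \Cref{thm:reduced_subwords}, every nontrivial $w \in W^I$ has a reduced expression beginning with $s_0$, and in fact $W^I$ is "thin" — I would verify that each $w \in W^I$ of length $L+1$ has a \emph{unique} predecessor of length $L$ in $W^I$, namely $s_0 \cdot (\text{something})$, or more carefully that the Bruhat order on $W^I$ is a chain-like or tree-like poset rooted at $e$. The strategy is then to mimic the proof of \Cref{thm:induction}: given $J_0 = \{W_I, wW_I\}$, percolate outward layer by layer in length, using the folding maps $s^+$ for simple reflections $s$, exactly as in \Cref{thm:induction}, but now checking at each step that condition (iii) holds — that each intermediate $J_i$ meets both $L_s^I \cup F_s^I$ and $R_s^I \cup F_s^I$. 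Because $|S \setminus I| = 1$, the coset space is narrow enough that $J_i$ always retains an element of low length (hence in $L_s^I \cup F_s^I$ for the relevant $s$) while also picking up the new length-$(L+1)$ element (in $R_s^I$), so strongness comes essentially for free. One subtlety is the base case: the starting pair has only two elements, so I must check that even at step $0$ the set $\{W_I, wW_I\}$ straddles the first cut involution correctly; choosing the first reflection to be the initial simple reflection $s$ in a reduced word for $w$ handles this, since then $W_I \in L_s^I$ and $wW_I$ is reachable.

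The main obstacle, I expect, is the "only if" direction — proving that $|I| \le |S|-2$ genuinely obstructs \emph{every} possible strong percolating sequence, not just the naive layer-by-layer one. The difficulty is that strong folding sequences are allowed to use arbitrary reflections $t \in T$ in arbitrary order, not just simple ones applied in length order, so ruling them all out requires a genuine invariant. I anticipate the right invariant is something like: the set of cosets "between" the two obstruction vertices $s W_I$ and $s' W_I$ in the Bruhat order, which forms a nontrivial interval that any cut involution either fixes setwise or moves entirely to one side, so that once the sequence has absorbed this interval it can no longer be extended strongly, yet it must absorb it to reach all of $W/W_I$. Making "entirely to one side" rigorous will hinge on \Cref{prop:order_preserving} and a careful analysis of how $P^I$ interacts with left multiplication by a general reflection, together with the two-element-initial-set constraint forcing the sequence to grow slowly enough that the obstruction cannot be "split." If this interval argument resists, a fallback is to count: bound the size of $J_i$ from above by a function of $i$ (since each fold at most roughly doubles the set) and compare with how fast strongness forces the sequence to fill in the two incomparable "wings" of $W/W_I$, deriving a contradiction with $|J_N| = |W/W_I|$.
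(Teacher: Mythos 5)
Your overall plan for the ``if'' direction is in the right spirit---percolate outward layer by layer via Proposition~\ref{thm:induction}, checking condition~(iii) at each step, and handle the base case carefully---but several of your structural claims about $W^I$ are false and would derail the argument. It is \emph{not} true that every nontrivial $w \in W^I$ has a reduced expression beginning with the missing generator $s_0$; the correct and useful fact is that every reduced expression of such a $w$ must \emph{end} with $s_0$ (otherwise $ws < w$ for some $s \in I$, contradicting minimality in the coset). Likewise, the claim that each $w \in W^I$ has a unique length-$(\ell(w)-1)$ predecessor in $W^I$ and that the Bruhat poset on $W^I$ is chain- or tree-like is wrong: already for $(W,S) = (S_4, \{s_1,s_2,s_3\})$ and $I = \{s_1,s_3\}$, the poset $W^I \cong \binom{[4]}{2}$ has the element $\{2,4\}$ covering both $\{1,4\}$ and $\{2,3\}$. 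The paper's construction for this direction is more delicate: writing $w = s_1 \cdots s_\ell$ reduced (so $s_\ell = s_0$) and letting $k$ be the \emph{first} index with $s_k = s_0$, one uses right-folds by $s_1,\dots,s_{k-1} \in I$ to peel letters off the left of $w$ while keeping $W_I$ fixed (each such $s_i$ places $W_I$ in $F^I_{s_i}$, giving strongness), then one left-fold by $s_0$ to land on the stack $U_1 = \{W_I, s_0 W_I\}$, after which Proposition~\ref{thm:induction} applies and strongness is preserved because $U_1 \subseteq J_i$ persists by Lemma~\ref{lem:stack}.

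The ``only if'' direction is where your proposal has the more serious gap: the interval-in-Bruhat-order obstruction and the fallback size-counting argument both fail to engage with the actual structure. Counting cannot work because a strong folding sequence can be arbitrarily long and has no useful upper bound on $|J_i|$. The paper's argument is quite different and hinges on two Coxeter-theoretic facts you do not invoke: the Strong Exchange Property (\Cref{thm:strongexchange}) and the fact that all reduced expressions of a given element use the same set of simple reflections (\Cref{cor:generating}). Starting from $J_0 = \{W_I, sW_I\}$ with $s, s' \notin I$ distinct, one considers the \emph{first} step at which a reflection $t$ whose reduced expressions involve $s'$ is applied. Up to that point every coset in $J_k$ has a representative $w$ in the parabolic subgroup $W_{I\cup\{s\}}$, so every reduced word for $w$ omits $s'$. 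If $\ell(tw) < \ell(w)$, Strong Exchange would express $t$ as a conjugate $u s_i u^{-1}$ using only letters of $w$, contradicting $s' \in t$ via Corollary~\ref{cor:generating}; and $\ell((tw)^I) = \ell(w^I)$ would force $t \in wW_Iw^{-1} \subseteq W_{I\cup\{s\}}$ by \Cref{fixed points}, again contradicting $s' \in t$. Hence $J_k \subseteq L^I_t$, violating condition~(iii). This ``look at the first reflection touching $s'$'' step is the crucial idea that your proposal does not identify, and without it the obstruction argument has no purchase.
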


\begin{proof}
Suppose first that $|I|=|S|-1$ with $s \in S \setminus I$. By \Cref{lem:reduction}, it suffices to find a strong percolating sequence starting with $J_0 = \{W_I, wW_I\}$ for each $w \in W^I\setminus \{e\}$. Let $w = s_1 s_2 \cdots s_\ell$ be a reduced expression for $w$. Then $s_\ell$ must be $s$, as otherwise it contradicts the fact that $W^I$ consists of those $w\in W$ of minimum length in the coset $wW_I$. Let $k$ be the smallest index $i$ such that $s_i = s$.

The strong percolating sequence $J_0, J_1, \dots, J_{N}$ which we construct has two parts. The first part, up to $J_{k}$, is chosen so as to guarantee that $J_{k}$ contains $U_1= \{W_I, s W_I\}$, a stack of length one in $W/W_I$. We may then assume that $J_{k}=U_1$ and show that the second half, $J_{k+1},J_{k+2},\dots,J_N$, satisfies $J_N=W/W_I$.

For each $0\leq i<k$, we make a choice between  $J_{i+1} = J_{i}^+(s_{i+1})$ or $J_{i}^-(s_{i+1})$ that maintains the condition that each $J_i$ contains $\{W_I, s_{i+1} s_{i+2} \cdots s_\ell W_I\}$ as a subset and intersects both $F^I_{s_{i+1}}\cup L^I_{s_{i+1}}$ and $F^I_{s_{i+1}} \cup R^I_{s_{i+1}}$. 
To do this, for $0 \leq i \leq k-2$, we take $J_{i+1} = J_{i}^-(s_{i+1})$. In this case, $W_I \in F^I_{s_{i+1}}$, while $s_{i+1} s_{i+2} \cdots s_\ell W_I \in R^I_{s_{i+1}}$, so $J_{i+1}$ contains $\{W_I, s_{i+2}\cdots s_\ell W_I\}$. For the final step, take $J_k = J_{k-1}^+(s_{k})= J_{k-1}^+(s)$. But $W_I \in L^I_{s}$ generates $\{W_I, s W_I\}$ in $J_k$, as desired.

For the second half of the sequence, starting with $J_k$, which contains $U_1 = \{W_I, sW_I\}$, and iterating \Cref{thm:induction} for $L=1,2,\dots$ eventually gives a percolating sequence that ends with $J_N=W/W_I$ and satisfies $J_{i+1}=J_i^+(s_i')$ or $J_{i+1}=J_i^-(s_i')$ for some $s_i'\in S$. We claim that this is a strong percolating sequence. Indeed, by \Cref{lem:stack}, $U_1= \{W_I, sW_I\}$ is contained in every $J_i$ for $i\geq k$. 
If $s_i'=s$, then $W_I \in J_i \cap L^I_{s}$, while $sW_I \in J_i \cap R^I_{s}$. Otherwise, if $s_i' \in I$, then $W_I \in J_i \cap F^I_{s_i'}$. 

Conversely, suppose $|I| \leq |S|-2$ with two distinct simple reflections $s$ and $s'$ that are not in $I$. We shall prove that there is no strong percolating sequence starting from $J_0 = \{W_I, s W_I\}$. Suppose, for the sake of contradiction, that there is a strong percolating sequence $J_0, J_1, \dots, J_N = W/W_I$.

Among all the reflections used to construct the strong percolating sequence, let $t \in T$ be the first reflection that contains $s'$ in its reduced expressions. Suppose that $t$ is used at the $(k+1)$-st step, i.e.,  $J_{k+1}=J^+_k(t)$ or $J_{k+1}=J^-_k(t)$. We claim that the whole of $J_k$ is contained in the left side $L^I_{t}$, which means that the sequence is not a strong percolating sequence.

Let $w W_I \in J_k$, noting that $w \in W_{I \cup \{s\}}$, and let $w=s_1s_2\cdots s_\ell$ be a reduced expression for $w$. 
If $\ell(tw) < \ell(w)$, then, by \Cref{thm:strongexchange}, there is $i\in [\ell]$ such that $t=us_iu^{-1}$ for $u=s_1s_2\cdots s_{i-1}$. 
However, this contradicts \Cref{cor:generating}, since the simple reflection $s'$ contained in any reduced expression for $t$ is not in that for $us_iu^{-1}$. Thus, we must have $\ell(tw) \geq \ell(w)$. Moreover, by applying the order-preserving projection $P^I$, $\ell((tw)^I) \geq \ell(w^I)$.

It remains to show that $\ell((tw)^I) = \ell(w^I)$ is also impossible. By \Cref{fixed points}, this equality is equivalent to $tw W_I = wW_I$, that is, $t=ww'w^{-1}$ for some $w'\in W_I$. However, this again contradicts \Cref{cor:generating}. Hence, since $\ell((tw)^I) > \ell(w^I)$, we have $wW_I \in L^I_{t}$ by the definition of $L^I_{t}$, as required.
\end{proof}

We therefore see that generalised face-incidence graphs are strongly percolating, completing the proof of~\Cref{thm:main}. It may even be that this is a complete classification of those reflection graphs that strongly percolate, i.e., that the $(I, J; W, S)$-graph strongly percolates if and only if $|I| = |J| = |S| - 1$. Unfortunately, this is not implied by~\Cref{main}, since the converse to~\Cref{thm:cut_involution} does not hold, that is, cut involutions in reflection graphs do not necessarily correspond to reflections in the associated Coxeter group (see, for example,~\cite[Example 4.10]{CL16}). 
To say more, let us look more closely, but from our point of view, at the  $2$-blow-up of $C_6$, which was already observed to be non-strongly-percolating in~\cite{JS24}.

\begin{example}
Let $G_1$ and $G_2$ be bipartite graphs with bipartitions $A_1 \cup B_1$ and $A_2\cup B_2$, respectively. The \emph{bipartite tensor product} of $G_1$ and $G_2$, denoted by $G_1 \times G_2$, is the bipartite graph with bipartition $(A_1 \times A_2)\cup (B_1 \times B_2)$, where two vertices $(a_1, a_2)$ and $(b_1, b_2)$ are adjacent if and only if $a_i$ and $b_i$ are adjacent in $G_i$ for both $i = 1$ and $2$. As noted in \cite{CL16}, the bipartite tensor product of two reflection graphs is again a reflection graph. But the $2$-blow-up of $C_6$ can be expressed as the bipartite tensor product of $C_6$ and $K_{2,2}$, both of which are reflection graphs, so it is also a reflection graph.

\begin{figure}[h]
\centering
    \begin{tikzpicture}[scale=1]
        \foreach \i in {0,...,5}
			\node[vertex] at (360/6*\i : 0.7) (v\i) {};
		\foreach \i in {0,...,5}
			\node[vertex] at (360/6*\i : 1.3) (u\i) {};
        \foreach \i [evaluate={\j=int(mod(\i+1,6));}] in {0,...,5}
            {\draw (u\i)--(u\j)--(v\i);
            \draw (u\i)--(v\j)--(v\i);}
		\end{tikzpicture}
        \caption{The $2$-blow-up of $C_6$}
\end{figure}
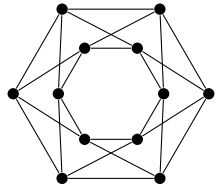

More explicitly, using that $C_6$ is the incidence graph between the vertices and edges of an equilateral triangle, we may view $C_6$ as the $(\{s_1\}, \{s_2\}; S_3, \{s_1, s_2\})$-graph, where $S_3$ is the permutation group on three vertices, $s_1 = (1, 2)$ and $s_2 = (2, 3)$. Taking $s_1'$ and $s_2'$ to be two commuting involutions, we can view $K_{2,2}$ as the $(\{s_1'\}, \{s_2'\}; W', \{s_1', s_2'\})$-graph, where $W'= \langle s_1',s_2'\vert (s_1')^2=(s_2')^2=1, s_1's_2'=s_2's_1'\rangle $ is the Coxeter group generated by $\{s_1', s_2'\}$.

Their bipartite tensor product, which is the $2$-blow-up of $C_6$, may then be viewed as the $(\{s_1, s_1'\}, \{s_2, s_2'\}; W, \{s_1, s_2, s_1', s_2'\})$-graph, where $W=S_3\times W'$ is the Coxeter group which is the direct product of $S_3$ and $W'$. Not all cut involutions in this graph correspond to reflections. However, if we restrict ourselves so that we can only apply reflections, rather than any cut involution, when creating a folding sequence, then we cannot strongly percolate. Indeed, since the $2$-blow-up of $C_6$ is an $(I,J;W,S)$-graph with $|S| = 4$ and $|I| = |J| 
= 2$, \Cref{main} implies that it does not strongly percolate through reflections. Of course, it is easily checked by hand that it also does not strongly percolate without this restriction. 

We think that this should be a more general phenomenon, that is, that if a reflection graph is not strongly percolating through reflections, then it is not strongly percolating. Rephrasing this in the contrapositive, we believe that if a graph strongly percolates, then it also strongly percolates through reflections alone. If this were true, and we leave it as an open problem to show that it is, then, together with~\Cref{main}, it would yield the classification of strongly percolating reflection graphs suggested above, namely, that the $(I, J; W, S)$-graph strongly percolates if and only if $|I| = |J| = |S| - 1$.
\end{example}

On the other hand, we now give several more examples of graphs $H$ for which~\Cref{thm:main} gives a new upper bound for $\ext(n,H)$, confirming~\Cref{conj:CL} for these graphs. 
\Cref{thm:main} is particularly powerful when the bipartition of $H$ is \emph{balanced} in the sense that the two parts in the bipartition are of the same size. In this case, the reflection graph $H$ becomes $r$-regular for some $r$.

\begin{corollary}\label{cor:balanced}
    Let $H$ be an $r$-regular generalised face-incidence graph. Then $\ext(n,H)=O(n^{2-c})$ where $c=\frac{v(H)-2}{v(H)(r-1)}$. In particular, $H$ satisfies~\Cref{conj:CL} unless $H=K_{r,r}$.
\end{corollary}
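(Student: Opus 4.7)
The plan is to deduce this corollary directly from \Cref{thm:main} through a short parameter computation, with only a small additional check to justify the \emph{in particular} clause.

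First, I would use the regularity of $H$ to determine $t$ and $e(H)$ in terms of $v(H)$. Since $H$ is $r$-regular bipartite with bipartition $A \cup B$, double-counting edges gives $r|A| = e(H) = r|B|$, so the bipartition is balanced with $|A| = |B| = v(H)/2$. Hence $t = v(H)/2$ and $e(H) = rv(H)/2$. Substituting into the formula $c = (v(H)-t-1)/(e(H)-t)$ provided by \Cref{thm:main} and simplifying yields
\[
c = \frac{v(H)/2 - 1}{rv(H)/2 - v(H)/2} = \frac{v(H)-2}{v(H)(r-1)},
\]
which is the claimed exponent.

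For the \emph{in particular} clause, I would verify two things under the assumption $H \neq K_{r,r}$: first, that $H$ is $K_{r,r}$-free, so that Conjecture~\ref{conj:CL} applies, and second, that $c > 1/r$, so that $n^{2-c}$ provides a genuine power-saving over F\"uredi's $n^{2-1/r}$. For the first, note that $H$ is an $(I,J;W,S)$-graph with $|I| = |J| = |S|-1$; the case $I = J$ produces only a disjoint union of edges and is ruled out for $r \ge 2$, so $I \neq J$ and hence $I \cup J = S$, which by \Cref{lem:notree}(ii) makes $H$ connected. In a connected $r$-regular bipartite graph, any $K_{r,r}$ subgraph would exhaust the neighborhoods of all its vertices and hence coincide with the whole graph, so $H \neq K_{r,r}$ forces $H$ to be $K_{r,r}$-free. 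For the second, an $r$-regular balanced bipartite graph has $v(H) \ge 2r$, with equality iff $H = K_{r,r}$, so $v(H) > 2r$ in our setting. A short manipulation shows $v(H) > 2r \iff r(v(H)-2) > v(H)(r-1) \iff c > 1/r$, completing the argument.

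Since the corollary is essentially a reparametrization of \Cref{thm:main} under the additional assumption of regularity, I do not foresee any substantive obstacle; the only real content beyond the formula for $c$ is the observation that the connectedness guaranteed by \Cref{lem:notree}(ii) rules out the existence of a proper $K_{r,r}$ subgraph in an $r$-regular graph.
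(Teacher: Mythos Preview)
Your proof is correct and follows essentially the same approach as the paper: both derive the exponent by substituting $t = v(H)/2$ and $e(H) = rv(H)/2$ into \Cref{thm:main}, and both verify $c > 1/r$ via the equivalent inequality $v(H) > 2r$. Your explicit check that $H$ is $K_{r,r}$-free (using connectedness from \Cref{lem:notree}(ii) to see that a $K_{r,r}$ subgraph would have to be the whole graph) is a detail the paper leaves implicit.
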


To see why the `in particular' part works, it is enough to observe that 
$$\frac{v(H)-2}{v(H)(r-1)} - \frac{1}{r} = \frac{v(H)-2r}{v(H)(r-1)r} \ge 0,$$
with equality if and only if $v(H) = 2r$, in which case $H$ must be $K_{r,r}$. Note that this already recovers the Janzer--Sudakov result that~\Cref{conj:CL} holds for hypercubes and bipartite Kneser graphs.

As an application of~\Cref{cor:balanced}, we can give a non-trivial upper bound for the $(0,3)$-incidence graph $H$ of the 24-cell, the 4-dimensional self-dual regular polytope associated with the exceptional finite reflection group $F_4$. 
The 24-cell has 24 vertices and 24 octahedral faces, with each vertex contained in six faces. The 24-cell is shown in~\Cref{subfig:24cell}, with three of its 24 octahedral faces highlighted. 
Its $(0,3)$-incidence graph $H$ is the $6$-regular bipartite graph on $48$ vertices given in~\Cref{subfig:incidence}. For this graph $H$,~\Cref{cor:balanced} gives $\ext(n,H)=O(n^{109/60})$, beating F\"uredi's bound of $O(n^{11/6})$ and thereby confirming~\Cref{conj:CL} in this case.

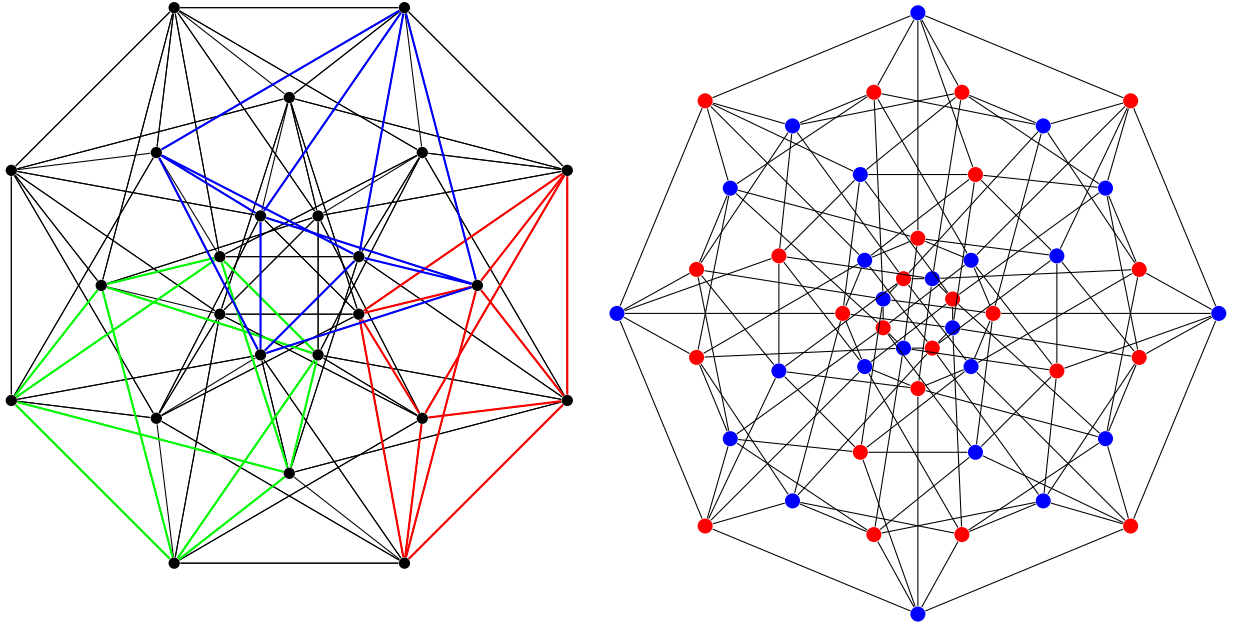
\begin{figure}[!thb]\label{fig:24-cell}

\begin{minipage}{0.50\textwidth}
\centering
\subfloat[\small The 24-cell \label{subfig:24cell}]{

\vspace{7.5mm}
\begin{tikzpicture}
    \foreach \i in {0,...,7}{
    \node[vertex] at (360/8*\i+22.5:1) (a\i) {};
    \node[vertex] at (360/8*\i:2.5) (b\i) {};
    \node[vertex] at (360/8*\i+22.5:4) (c\i) {};
    }

    \foreach \i
    [evaluate=
    {\a=int(mod(\i+1,8)); \b=int(mod(\i+2,8)); \c=int(mod(\i+3,8)); \d=int(mod(\i+4,8)); \e=int(mod(\i+5,8)); \f=int(mod(\i+6,8)); \g=int(mod(\i+7,8));}]
    in {0,...,7} {
    \draw (a\i)--(a\c);
    \draw (a\i)--(a\e);
    \draw (a\i)--(b\i);
    \draw (a\i)--(b\a);
    \draw (a\i)--(b\c);
    \draw (a\i)--(b\f);
    \draw (a\i)--(c\a);
    \draw (a\i)--(c\g);

    \draw (b\i)--(a\b);
    \draw (b\i)--(a\e);
    \draw (b\i)--(a\g);
    \draw (b\i)--(c\i);
    \draw (b\i)--(c\a);
    \draw (b\i)--(c\f);
    \draw (b\i)--(c\g);
    
    \draw (c\i)--(a\a);
    \draw (c\i)--(a\g);
    \draw (c\i)--(b\a);
    \draw (c\i)--(b\b);
    \draw (c\i)--(b\g);
    \draw (c\i)--(c\a);
    \draw (c\i)--(c\g);
    }

    \draw[red,thick] (a7)--(b7)--(c7)--(b0)--(a7);
    \draw[red,thick] (c0)--(a7)--(c6)--(b0)--(c0);
    \draw[red,thick] (c0)--(c7)--(c6)--(b7)--(c0);

    \draw[green,thick] (a3)--(c4)--(c5)--(a6)--(a3);
    \draw[green,thick] (b4)--(c4)--(b6)--(c5)--(b4);
    \draw[green,thick] (b4)--(a3)--(b6)--(a6)--(b4);

    \draw[blue,thick] (a2)--(b3)--(a0)--(b0)--(a2);
    \draw[blue,thick] (c1)--(a2)--(a5)--(b3)--(c1);
    \draw[blue,thick] (c1)--(b0)--(a5)--(a0)--(c1);

\end{tikzpicture}
}

\end{minipage}%
\begin{minipage}{0.50\textwidth}

\centering
\subfloat[\small The $(0,3)$-incidence graph of the $24$-cell \label{subfig:incidence}]{

\begin{tikzpicture}
    \foreach \i in {0,1,2,3}{
    \node[vertexred] at (90*\i+22.5:0.5) (x\i) {};
    \node[vertexblue] at (90*\i+67.5:0.5) (a\i) {};

    \node[vertexred] at (90*\i:1) (y\i) {};
    \node[vertexblue] at (90*\i+45:1) (b\i) {};

    \node[vertexblue] at (90*\i+22.5:2) (c\i) {};
    \node[vertexred] at (90*\i+67.5:2) (z\i) {};

    \node[vertexblue] at (90*\i:4) (e\i) {};
    \node[vertexred] at (90*\i+45:4) (s\i) {};
    }

    \foreach \i in {0,2,4,6}{
    \node[vertexred] at (45*\i-11.25:3) (w\i) {};
    \node[vertexblue] at (45*\i+33.75:3) (d\i) {};
    }

    \foreach \i in {1,3,5,7}{
    \node[vertexred] at (45*\i-33.75:3) (w\i) {};
    \node[vertexblue] at (45*\i+11.25:3) (d\i) {};
    }

    \foreach \i
    [evaluate=
    {\a=int(mod(\i,4)); \b=int(mod(\i+1,4)); \c=int(mod(\i+2,4)); \d=int(mod(\i+3,4)); 
    \aa=int(mod(2*\i,8)); \bb=int(mod(2*\i+1,8)); \cc=int(mod(2*\i+2,8)); \dd=int(mod(2*\i+3,8)); \ee=int(mod(2*\i+4,8)); \ff=int(mod(2*\i+5,8)); \gg=int(mod(2*\i+6,8)); \hh=int(mod(2*\i+7,8));}]
    in {0,1,2,3} {
    \draw (x\i)--(a\a);
    \draw (x\i)--(a\d);
    \draw (x\i)--(b\c);
    \draw (x\i)--(c\b);
    \draw (x\i)--(d\hh);
    \draw (x\i)--(d\aa);

    \draw (y\i)--(a\b);
    \draw (y\i)--(b\a);
    \draw (y\i)--(b\d);
    \draw (y\i)--(c\d);
    \draw (y\i)--(d\bb);
    \draw (y\i)--(e\a);

    \draw (z\i)--(a\d);
    \draw (z\i)--(b\b);
    \draw (z\i)--(c\a);
    \draw (z\i)--(c\b);
    \draw (z\i)--(d\aa);
    \draw (z\i)--(e\b);

    \draw (s\i)--(b\a);
    \draw (s\i)--(c\a);
    \draw (s\i)--(d\aa);
    \draw (s\i)--(d\bb);
    \draw (s\i)--(e\a);
    \draw (s\i)--(e\b);

    \draw (w\aa)--(a\d);
    \draw (w\aa)--(c\a);
    \draw (w\aa)--(d\aa);
    \draw (w\aa)--(d\gg);
    \draw (w\aa)--(d\hh);
    \draw (w\aa)--(e\a);

    \draw (w\bb)--(a\a);
    \draw (w\bb)--(b\d);
    \draw (w\bb)--(d\aa);
    \draw (w\bb)--(d\bb);
    \draw (w\bb)--(d\hh);
    \draw (w\bb)--(e\a);
    }

\end{tikzpicture}

}
\end{minipage}%
\caption{The 24-cell and its $(0,3)$-incidence graph}
\end{figure}

For the rest of this section, we will look for non-regular examples, focusing on reflection graphs arising from the symmetric group $S_m$. 
A standard way to describe $S_m$ as a Coxeter system is to use the set of adjacent transpositions $s_i=(i,i+1)$ for $i = 1, \dots, m-1$ as the set $S$ of simple reflections. Note that we may write any permutation $x \in S_m$ as $x = x_1 x_2 \cdots x_m$, where $x_i=x(i)$. Assuming this notation, for $k \in [m-1]$, let $S_{m}^{(k)}:=\{x \in S_m: x_1 < \cdots < x_k \text{ and } x_{k+1}<\cdots <x_m\}$. We have the following straightforward lemma.

\begin{lemma}[Lemma $2.4.7$ in~\cite{BB05}]\label{lem:(S_n)_I}
    Let $I = S \setminus \{s_k\}$. Then $(S_m)_I \cong S_k \times S_{m-k}$ and $(S_m)^I = S_m ^{(k)}$. 
    In particular, $(S_m)^I$ can be identified with $\binom{[m]}{k}$, the set of all $k$-subsets of $[m]$.
\end{lemma}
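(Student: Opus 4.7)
The plan is to verify the two identities separately; the identification with $\binom{[m]}{k}$ will then fall out of the description of $(S_m)^I$. Throughout, I will use the standard fact that the length $\ell(w)$ of $w \in S_m$ with respect to the adjacent transpositions $s_i = (i,i+1)$ is exactly the number of inversions of $w$, and, consequently, $\ell(ws_i) > \ell(w)$ if and only if $w_i < w_{i+1}$, where $w = w_1 w_2 \cdots w_m$ in one-line notation.

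For the first identity, I would decompose the generating set as $I = \{s_1, \dots, s_{k-1}\} \sqcup \{s_{k+1}, \dots, s_{m-1}\}$. The subgroup $A$ generated by $\{s_1, \dots, s_{k-1}\}$ consists of permutations of $\{1, \dots, k\}$ fixing $\{k+1, \dots, m\}$ pointwise, hence is isomorphic to $S_k$. Symmetrically, the subgroup $B$ generated by $\{s_{k+1}, \dots, s_{m-1}\}$ is isomorphic to $S_{m-k}$. Since $A$ and $B$ act on disjoint sets, they commute elementwise and have trivial intersection. Therefore, $(S_m)_I = AB \cong A \times B \cong S_k \times S_{m-k}$.

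For the second identity, I would unpack the definition: $w \in (S_m)^I$ if and only if $\ell(ws_i) > \ell(w)$ for every $i \in [m-1] \setminus \{k\}$. By the length criterion recalled above, this is equivalent to $w_i < w_{i+1}$ for every $i \neq k$, which is precisely the pair of chains $w_1 < w_2 < \cdots < w_k$ and $w_{k+1} < w_{k+2} < \cdots < w_m$ defining $S_m^{(k)}$. Thus $(S_m)^I = S_m^{(k)}$.

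Finally, the bijection $(S_m)^I \to \binom{[m]}{k}$ sending $w$ to $\{w_1, \dots, w_k\}$ is well-defined and injective because the two increasing chains of values are reconstructed from their underlying sets, and it is surjective because any $k$-subset $T \subseteq [m]$ produces a valid element of $S_m^{(k)}$ by listing $T$ in increasing order in the first $k$ positions and listing $[m] \setminus T$ in increasing order in the remaining positions. There is no real obstacle here; the only substantive ingredient is the standard length-inversion correspondence in $S_m$, which is a textbook fact.
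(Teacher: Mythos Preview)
Your argument is correct. Note, however, that the paper does not give its own proof of this lemma: it is quoted verbatim as Lemma~2.4.7 of~\cite{BB05} and used as a black box. Your write-up is essentially the standard textbook argument (and indeed the one in~\cite{BB05}), relying on the length--inversion correspondence for $S_m$ to characterise $(S_m)^I$ and on the disjoint-support decomposition of $I$ to identify $(S_m)_I$.
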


The \emph{(m,a,b)-inclusion graph} for $1\leq a<b <m$ is the bipartite graph between the set of $a$-element subsets and the set of $b$-element subsets of $[m]$, where two subsets $U \in \binom{[m]}{a}$ and $V \in \binom{[m]}{b}$ are adjacent if and only if $U \subseteq V$. 
\Cref{lem:(S_n)_I} implies that the $(m,a,b)$-inclusion graph is isomorphic to the $(S\setminus \{s_{a}\}, S\setminus \{s_{b}\}; S_m, S)$-graph. 
Consequently, any such graph strongly percolates and so \Cref{thm:incidence_main} gives an upper bound for its extremal number. We now analyse when this bound is stronger than F\"{u}redi's classical bound.

\begin{example}\label{ex:S_n}
Let $H$ be the $(m,a,b)$-inclusion graph with bipartition $X\cup Y$, where $X = \binom{[m]}{a}$ and $Y = \binom{[m]}{b}$. Then $e(H)=\binom{m}{a}\binom{m-a}{b-a}=\binom{m}{b}\binom{b}{a}$.

Suppose first that $a<b\leq m/2$. Then $|X|\leq |Y|$, so the F\"uredi bound gives $\ext(n,H)\leq n^{2-c}$ with $c=1/\binom{b}{a}$. On the other hand,~\Cref{thm:main} gives $\ext(n,H)\leq n^{2-c'}$ with
\begin{align*}
    c'= \frac{{\binom{m}{a}} -1}{\binom{m}{b} \binom{b}{a} - \binom{m}{b}}.
\end{align*}
Therefore, $c'>c$ if and only if 
\begin{align*}
    1-\dfrac{1}{\binom{b}{a}} < \dfrac{\binom{m}{a}-1}{\binom{m}{b}}.
\end{align*}
The range of $(a,b)$ for which this inequality is satisfied is quite narrow. For example, if $b=O_m(1)$, the inequality is never satisfied for large enough $m$. Even for $b=\lfloor m/2\rfloor$, $a=b-1$ is the only value for which the inequality is satisfied and even then only for $m$ even. 
The case $m/2<a<b$ also follows from the same analysis, as one can take $m-a$ and $m-b$ instead of $b$ and $a$, respectively, by passing to the complement of each set.

Next, suppose that $a\leq m/2<b$. We may assume that $|a-m/2| \leq |b-m/2|$, as otherwise one can again use $m-a$ and $m-b$. A similar analysis to that above gives that~\Cref{thm:main} improves on F\"uredi's bound if the inequality 
\begin{align}\label{case2}
    1-\dfrac{1}{\binom{m-a}{b-a}} < \dfrac{\binom{m}{b}-1}{\binom{m}{a}}.
\end{align}
holds. This again has very few solutions, though it is satisfied when $b=a+1$ for $a=\lfloor m/2\rfloor$, which, for $m$ even, yields a graph which is isomorphic to the graph with $b = \lfloor m/2\rfloor$ and $a = b -1$. 
It is also satisfied when $b=m-a$, again recovering the case of bipartite Kneser graphs.
\end{example}

\section{Concluding remarks}

As we have seen in~\Cref{cor:balanced} and~\Cref{ex:S_n}, \Cref{thm:main} is most efficient for bipartite graphs with an almost balanced bipartition. This is due to the fact that one cannot generally control the part in which the degeneracy happens. An exception is the simple proof of the classical K\H{o}vari--S\'os--Tur\'an theorem, saying that $\ext(n,K_{r,s})=O(n^{2-1/r})$ for $r\leq s$, which proceeds by counting the number of non-degenerate $r$-leaf stars, avoiding degeneracy on the $r$-vertex side of $K_{r,s}$ from the beginning. If there were an argument that allowed us to count the number of copies of some unbalanced incidence graph $H$ while avoiding degeneracy on the smaller side of the bipartition, then it would allow us to take $t$ in~\Cref{thm:main} to be the size of the smaller side, potentially giving a better bound for $\ext(n,H)$. However, following through on this plan seems to require additional ideas.

It is also worth remarking that much of what we have said about strong percolation with respect to reflection graphs carries over to reflection hypergraphs. If we define strong percolation for $r$-graphs in the natural way, we see, from~\Cref{main}, that the $(I_1,\dots,I_r;W,S)$-graph strongly percolates if $|I_i| = |S| - 1$ for all $1 \le i \le r$. That is, generalised face-incidence hypergraphs strongly percolate. Unfortunately, we do not have an obvious analogue of~\Cref{thm:strong percolation} to which we can apply this result. However, it may still be interesting to explore this direction further. 

\bibliographystyle{plainurl}
\bibliography{references}

\end{document}